\definecolor{yellow1}{rgb}{1,0.8,0.2}
\newtheorem{thm}{Theorem}
\newtheorem{lem}{Lemma}
\newtheorem{prop}{Proposition}
\newtheorem{ass}{Assumption}
\newcommand{\Var}{\ensuremath{\mathrm{Var}}} 
\newcommand{\normm}[1]{{\left\vert\kern-0.25ex\left\vert\kern-0.25ex\left\vert #1
		\right\vert\kern-0.25ex\right\vert\kern-0.25ex\right\vert}}
\begin{document}

\title{Trust Region Algorithm for Stochastic Minimax Problems with Decision-Dependent Distributions}
\author{Yan Gao, Yongchao Liu\thanks{School of Mathematical Sciences, Dalian University of Technology, Dalian 116024, China, e-mail: gydllg123@mail.dlut.edu.cn (Yan~Gao), lyc@dlut.edu.cn (Yongchao~Liu), 979510015@mail.dlut.edu.cn (Zili~Luo).}, Zili Luo}

\date{}
\maketitle
\noindent{\bf Abstract.} 	
Stochastic minimax optimization has drawn much attention over the past decade due to its broad applications in machine learning, signal processing and game theory. 
In some applications, the probability distribution of uncertainty depends on decision variables, as the environment may respond to decisions. 
In this paper, we propose a trust region algorithm for finding the stationary points of stochastic minimax problems with decision-dependent distributions. At each iteration, the algorithm locally learns the dependence of the random variable on the decision variable from data samples via linear regression, and updates the decision variable by solving trust-region subproblem with the learned distribution. When the  objective function is nonconvex--strongly concave and the distribution map follows a regression model, we prove the almost sure convergence of the iterates to a stationary point of the primal function. The effectiveness of the proposed algorithm is further demonstrated through numerical experiments on both synthetic and real-world data sets.

\noindent\textbf{Key words.} Stochastic minimax problems with decision-dependent distributions, Nonconvex-strongly concave minimax problems, Trust region algorithm, Distribution map learning, Local linear regression
\section{Introduction}	
Stochastic minimax optimization, which captures the nested structure by simultaneously minimizing and maximizing over two subsets of variables, arises in a variety of areas, including adversarial machine learning \cite{Creswell2018Generative, Goodfellow2014Advance}, distributionally robust optimization \cite{Levy2020Large,namkoong2016stochastic,zhu2023distributionally},  reinforcement learning \cite{sutton1988learning, wai2018multi}, game theory \cite{Carmon2019Variance,Nash1953Two}, among others. 
In some applications, the distributions of stochastic elements may depend on or shift in response to decision variables. 
For example, demand depends on price \cite{cheung2017dynamic,cooper2006models}, traffic predictions influence traffic patterns \cite{liebig2017dynamic, perdomo2020performative}, and predictions of credit default risk influence interest rate assignments and hence default rates \cite{inga2022credit,robinson2024loan}. 
When the decision variable in a stochastic minimax problem affects the probability distribution, the corresponding stochastic minimax problems with decision-dependent distributions (SMDD)~\cite{wood2023stochastic} can be formulated as follows:
\begin{equation}\label{eq:f_minimax_D}
	\min_{x\in\mathcal{X}}\max_{y\in \mathcal{Y}}\mathcal{L}(x,y)\coloneqq\underset{\omega\sim \mathcal{D}(x, y)}{\mathbb{E}}\left[l(x,y,\omega)\right],
\end{equation}
where $\omega$ is a random variable supported on $\mathcal{Z}\subset\mathbb{R}^{d}$, $\mathcal{X}\subset\mathbb{R}^{n}$, $\mathcal{Y}\subset\mathbb{R}^{m}$,  $l(\cdot):\mathbb{R}^{n}\times\mathbb{R}^{m}\times\mathbb{R}^{d}\rightarrow\mathbb{R}$ is a smooth function, $\mathcal{D}(\cdot):\mathbb{R}^{n}\times\mathbb{R}^{m}\rightarrow\mathcal{P}(\mathbb{R}^{d})$ is a distribution map, 
and $\mathbb{E}\left[\,\cdot\,\right]$ denotes the expectation with respect to the distribution $\mathcal{D}(x,y)$. 

Stochastic minimax problem  with decision-dependent distribution is proposed by Wood and Dall'Anese~\cite{wood2023stochastic}. 
Theoretically, the authors introduce the notion of performative equilibrium point which is the saddle point for the stochastic minimax problem whose probability distribution is induced by itself, and provide sufficient conditions that guarantee the existence and uniqueness of the  performative equilibrium point. Algorithmically, they
propose (stochastic) primal-dual algorithm for seeking the performative equilibrium point of SMDD. Under constant and dynamic stepsize policies, the authors show that the proposed algorithm achieves a linear convergence rate and an $\mathcal{O}(\frac{1}{\sqrt{t}})$ convergence rate,  respectively. Moreover, when $\mathcal{L}(\cdot)$ is strongly convex--strongly concave, a zeroth-order algorithm for finding the saddle point of SMDD is also proposed. 
Narang et al.~\cite{narang2023multiplayer} study a decision-dependent  stochastic game model and introduce the notion of performatively stable equilibrium, which is the Nash equilibrium of the game whose probability distribution is induced by itself. The authors show the existence and uniqueness of the performatively stable equilibrium
when the stochastic game is strongly monotone, and propose the repeated retraining and repeated (stochastic) gradient methods for finding the performatively stable equilibrium. When the decision-dependent stochastic game is strongly monotone, they propose an adaptive stochastic gradient method for finding the Nash equilibrium. Moreover, when the distribution map follows a location-scale model, the authors show that the proposed algorithm achieves a  convergence rate $\mathcal{O}(\frac{1}{\sqrt{t}})$.
In the setting of monotonicity of the decision-dependent stochastic games, Wood et al. \cite{wood2024solvingdecisiondependentgameslearning} propose a two-stage method for seeking the Nash equilibrium of the stochastic game, which estimates the distribution map first and then solves the game by stochastic gradient descent method. More recently, Gao and Liu~\cite{Gao2025Adaptive} propose the adaptive stochastic gradient descent ascent algorithm and its alternating variant to find the stationary points of SMDD, which learns the unknown distribution map dynamically and optimizes the minimax problem simultaneously via (alternating) stochastic gradient descent ascent at each iteration. When the distribution map follows a location-scale model, the authors analyze the non-asymptotic complexity under nonconvex--(strongly) concave and nonconvex--P{\L} settings of the  objective function, respectively.

The methods that learn the distribution map $\mathcal{D}(\cdot)$ from a prespecified parametric function class prior to or concurrently with optimization \cite{Gao2025Adaptive, narang2023multiplayer,wood2024solvingdecisiondependentgameslearning} may inadequately approximate the true distribution map. On the other hand, the zeroth-order method \cite{wood2023stochastic} that estimates the gradient from data samples is only applicable to SMDD with strongly convex--strongly concave  objective function.
In this paper, we consider a non-parametric function class, more specifically, we assume that the random variable $\omega$ follows a regression model with respect to the decision variable $x$, i.e.,
$$\omega=m(x,\tilde{\epsilon})\coloneqq\psi(x)+\tilde{\epsilon},$$  where 
$\psi:\mathbb{R}^{n}\rightarrow \mathbb{R}^{d} $ is an regression function, and
$\tilde{\epsilon}\in\mathbb{R}^{d}$ is a random variable independent of $x$ and $y$. 
If the regression function $\psi(\cdot)$ and the distribution of $\tilde{\epsilon}$ are known, SMDD \eqref{eq:f_minimax_D} is equivalent to the following standard stochastic minimax problem
\begin{equation}
	\label{eq:f_minimax_regression}
	\min_{x\in\mathcal{X}}\max_{y\in \mathcal{Y}}\mathcal{L}(x,y)\coloneqq\mathbb{E}_{\tilde{\epsilon}}\left[l(x, y, \psi(x) +\tilde{\epsilon})\right].
\end{equation}
Unfortunately, both the regression function $\psi(\cdot)$ and the random variable $\tilde{\epsilon}$ are typically unknown, and the realizations of $\tilde{\epsilon}$ are unobservable, whereas only the realizations of $\omega=\psi(x)+\tilde{\epsilon}$ can be observed for any $x \in \mathbb{R}^n$.
Motivated by \cite{liu2022coupled},
we propose a trust region (TR) algorithm that approximates the regression model $m(\cdot)$ in local regions using the data samples of $(x,\omega)$ via linear regression. At each iteration, we construct the trust region subproblem with the locally learned regression model $m^{k}(\cdot)$, and update the decision variable by taking one gradient step on the resulting trust-region subproblem.
Although the proposed algorithm is primarily inspired by \cite{liu2022coupled}, 
the minimax structure and the intractability of obtaining an exact solution to the inner maximization problem make it a nontrivial extension.
Moreover, the proposed TR algorithm is not applicable to the case where the underlying random variable depends on both the outer variable $x$ and the inner variable $y$. This is because the trust region approximation model for the primal function
$\Phi(x)\coloneqq\max_{y\in \mathcal Y} \mathcal{L}(x,y)$ becomes computationally infeasible in such case, as the distribution map is only approximated locally.
On the other hand, in contrast to \cite{narang2023multiplayer,wood2024solvingdecisiondependentgameslearning} that approximate the distribution map of the random variable globally via parametric model in an offline or online manner, our approach employs local linear regression within a sequence of trust regions, which enables a more accurate and flexible approximation of the true distribution map. 

Indeed, trust region methods have found widespread applications in solving nonconvex minimax problems.
Yao and Xu \cite{yao2024trustregiontypealgorithms} propose the Minimax Trust Region algorithms with fixed trust region radius and dynamic trust region radius for solving nonconvex--strongly concave minimax problems. Both algorithms find an $\mathcal{O}(\epsilon, \sqrt{\epsilon})$-second-order stationary point within $\mathcal{O}\left(\epsilon^{-3/2}\right)$ iterations.
Wang and Xu \cite{wang2025gradientnormregularizationsecondorder} propose a Gradient norm Regularized Trust Region (GRTR) algorithm, which finds an $(\epsilon, \sqrt{\epsilon})$-second-order stationary point within $\tilde{\mathcal{O}}\left(\epsilon^{-3/2}\right)$ iterations.
The authors further propose the Levenberg-Marquardt algorithm with Negative Curvature to reduce the computational burden of GRTR for solving trust region subproblems, and it achieves the same complexity as GRTR.
Qiu et al.~\cite{Qiu2024aquasiNewton} propose a Quasi-Newton Subspace Trust Region algorithm for nonconvex--nonconcave stochastic minimax problems with box constraints and prove the global convergence to the $\epsilon$-first-order stationary point. 
We direct the interested readers to \cite{Sobky2018Adaptive,Wang2012Anewtrust,WANG20138033,yang2020global} for new development of trust region algorithms on minimax problems.
Notably, the aforementioned trust region algorithms are second-order methods that approximate the  objective function using second-order information within each trust region. The proposed TR algorithm approximates the unknown distribution map of the stochastic  objective function with linear regression within each trust region, and the employment of TR algorithm only involves first-order information. 
Different from the deterministic algorithms \cite{yao2024trustregiontypealgorithms,{wang2025gradientnormregularizationsecondorder}}, we can only establish almost sure convergence of the TR algorithm, as the approximation of the primal function and its estimated value lead to inexact sufficient descent condition and step acceptance criterion.

The structure of this paper is organized as follows.
Section~\ref{section2} introduces the proposed TR algorithm and presents necessary assumptions on SMDD~\eqref{eq:f_minimax_regression}. Section \ref{section3} presents the convergence analysis of the TR algorithm. 
Section~\ref{section4} verifies the effectiveness of the TR algorithm on a simple synthetic example and a real-world application. 

\noindent\textbf{Notation.} $\mathbb{R}^{n}$ denotes the $n$-dimensional Euclidean space endowed with norm $\|x\|=\sqrt{\langle x,x\rangle}$. $\mathbb{N}$ denotes the set of nonnegative integers.
$\mathcal{B}(x,\delta)$ denotes the ball centered at $x$ with radius $\delta>0$, i.e., $\mathcal{B}(x,\delta) \coloneqq  \{ y \in \mathbb{R}^n : \|x-y\| \leq \delta \}.$
For a matrix $A\in\mathbb{R}^{n\times m}$, $\left\|A\right\|_F$ denotes the Frobenius norm and $A^\top$ denotes its transpose. 
For a differentiable function $f(\cdot),$ $\nabla f(\cdot)$ denotes its full gradient, $\nabla_{x}f(\cdot)$ and $\nabla_{y}f(\cdot)$ denote the partial gradients with respect to $x$ and $y$, respectively,
and \( \nabla_i f(x_1, x_2, x_3) \) denotes the partial gradient of \( f(\cdot) \) with respect to the \( i \)-th block variable \( x_i \), for \( i = 1, 2, 3 \).
We denote $a=\mathcal{O}(b)$ if $\exists~C>0$ such that $|a|\leq C|b|$.
Given a discrete set $\mathcal{S}$, we denote its cardinality by $|\mathcal{S}|$. 

\section{TR Algorithm }\label{section2}
Before presenting the algorithm, we introduce the local linear regression (LLR) model and some notation used in the TR algorithm.

Assuming that the independent and identically distributed (i.i.d.) samples from the probability distribution $\mathcal{D}(x)$ can be drawn for any given $x$, we approximate the unknown distribution map $\mathcal{D}(\cdot)$ in local regions using a linear regression model. 
Specifically, we consider a hypothesis class of affine functions
\[
\mathcal{H} \coloneqq \left\{ \phi(\cdot): \mathbb{R}^{n} \rightarrow \mathbb{R}^{d} \;\middle|\; \phi(x) = (B^1)^\top x + (B^0)^\top, \; B^1 \in \mathbb{R}^{n \times d}, \; B^0 \in \mathbb{R}^{1 \times d} \right\}.
\]
At the $k$-th iteration, given the current iterate $\widehat{x}^{k}$ and the corresponding trust-region radius $\delta_k$, we construct a data set $T_k = \{(x^i, \omega^i)\}_{i=1}^{N_k},$
where the points $\{x^i\}_{i=1}^{N_k}$ are drawn uniformly from the local region $\mathcal{B}(\widehat{x}^{k}, \delta_k)$ and $\omega^i \sim \mathcal{D}(x^i)$.
Among the parametric functions in the hypothesis class  $\mathcal{H}$, we seek the one that minimizes the sum of square errors on the data set $T_{k}$. 
Then the estimation of parameters $\widehat{B}^{k,1}, \widehat{B}^{k,0}$ and residuals $\{e^{k,i}\}$ are constructed as follows:
\begin{equation*}
	\begin{aligned}
		\left\{\widehat{B}^{k, 0}, \widehat{B}^{k, 1}\right\}\in \underset{B^{0}, B^{1}}{\operatorname{argmin}} \sum_{\left(x^{i}, \omega^{i}\right) \in T_{k}}\left\| \omega^{i}-\left(B^{1}\right)^{\top} x^{i}-\left(B^{0}\right)^{\top}\right\|^{2},
	\end{aligned}
\end{equation*}
\begin{equation*}
	\begin{aligned}
	e^{k, i} \coloneqq \omega^{i}-\left(\widehat{B}^{k, 1}\right)^{\top} x^{i}-\left(\widehat{B}^{k, 0}\right)^{\top},\quad\text {for}\ i=1,2,\cdots, N_{k}.
	\end{aligned}
\end{equation*}
Let $\widehat{\epsilon}^{k}$ denote a random variable with the empirical probability distribution of $\left\{e^{k, i}\right\}_{i=1}^{N_{k}}$. 
Then, the resulting LLR model $m_k(\cdot)$ is defined as
\begin{equation}
	\label{eq:LLR}
	m_{k}\left(\widehat{x}^{k}+s, \widehat{\epsilon}^{k}\right) \coloneqq\left(\widehat{B}^{k, 1}\right)^{\top}\left(\widehat{x}^{k}+s\right)+\left(\widehat{B}^{k, 0}\right)^{\top}+\widehat{\epsilon}^{k}.
\end{equation}

Given the data set  $S = \{ \omega^j \}_{j=1}^{|S|}$ with $\omega^j \stackrel{\text{i.i.d.}}{\sim} \mathcal{D}(x)$, we define the following notation used throughout the paper,
\begin{equation}\label{TR_notations}
	\begin{aligned}
		&y^{*}(x)\coloneqq \arg\max_{y\in \mathcal Y} \mathcal{L}(x,y),\,\,
		\Phi(\cdot)\coloneqq \max _{y \in \mathcal{Y}} \mathcal{L}(\cdot, y),  \\
		&\mathcal{L}^{k}(x,y)\coloneqq  \mathbb{E}_{\widehat{\epsilon}^k}\left[l(x,y,m_k(x,\widehat{\epsilon}^k))\right],\;\;
		y^{k,*}(x)\coloneqq \arg\max_{y\in \mathcal{Y}}\mathcal{L}^k{(x,y)},\\
		&y^{s,*}(x)\coloneqq \arg\max_{y\in\mathcal{Y}}\frac{1}{\vert S\vert}\sum_{\omega^j\in S}l(x, y, \omega^j),
	\end{aligned}
\end{equation}
where $\mathcal{L}^{k}(\cdot)$ is the local approximate of $\mathcal{L}(\cdot)$ at the trust region $\mathcal{B}(\widehat{x}^{k}, \delta_k)$. Moreover, we use $y^{i,*}(x)$ and $y^{is,*}(x)$ as the inexact substitutes of $y^{k,*}(x)$ and $y^{s,*}(x)$, respectively.

The proposed TR algorithm reads as follows.
\begin{algorithm}[H] 
	\caption{Trust Region Algorithm (TR)}
	\label{TR:algorithmCL}
	\begin{algorithmic}[1]
		\STATE \textbf{Initialization:}
		$\widehat{x}^{0}\in \mathbb{R}^n, \delta_{0}\in (0,\delta_{max})$ with $\delta_{max}>0,\gamma>1,\eta_1\in (0,1),\eta_2>0.$
		\FOR{$k=1$ \TO $T$}
		\STATE Generate a set of samples $\left\{u^{i}\right\}$ from a uniform distribution  $\mathcal{U}\left(\mathcal{B}\left(\mathbf{0}_{n}, 1\right)\right)$. Let the data set $T_{k} \coloneqq\left\{\left(x^{i}, \omega^{i}\right)\right\}$, with $\left\{x^{i}\right\} \coloneqq\left\{\widehat{x}^{k}+\delta_{k} u^{i}\right\}$ and $\left\{\omega^{i}\right\}$ are the corresponding scenarios following the true models  $\left\{m\left(x^{i}, \widetilde{\epsilon}\right)\right\}$.
		\STATE Construct a local linear regression model $m_{k}\left(\widehat{x}^{k}+s, \widehat{\epsilon}^{k}\right)$ by (\ref{eq:LLR}) with the data set $T_{k}$.
		\STATE Construct the trust region subproblem:
		\begin{equation}\label{TR_trustRegionMinimization}
			\min_{\substack{\|s\|\leq\delta_{k}}}\Phi^{k}(s)\coloneqq \max_{y\in \mathcal{Y}}\mathcal{L}^k{(\widehat{x}^k+s,y)}.
		\end{equation}
		\STATE Solving TR subproblem:
		compute a trial step $s^{k}$ satisfying
		{\begin{equation}
				\label{TR_algo_inex_suffdescent}
				\mathcal{L}^{k}(\widehat{x}^k,y^{i,*}(\widehat{x}^k))-\mathcal{L}^{k}(\widehat{x}^k+s^k,y^{i,*}(\widehat{x}^k+s^k)) \geq \kappa_{dcp} \|\nabla_{1} \mathcal{L}^{k}(\widehat{x}^{k},y^{i,*}(\widehat{x}^{k}))\| \min \left\{\delta_{k}, 1\right\}.
		\end{equation}}
		\STATE Compute $v_{k}$ via \eqref{TR_v_k} and $v_{k+1/2}$ via \eqref{TR_v_k_2}, and set
		\begin{equation}\label{equ:ratio}
			\rho_{k}\coloneqq
			\frac{v_{k}-v_{k+1/2}}
			{\mathcal{L}^{k}(\widehat{x}^k,y^{i,*}(\widehat{x}^k))
				-\mathcal{L}^{k}(\widehat{x}^k+s^k,y^{i,*}(\widehat{x}^k+s^k))}.
		\end{equation}
		\IF {$\rho_k\geq \eta_1$ and $\|\nabla_{1} \mathcal{L}^{k}(\widehat{x}^{k},y^{i,*}(\widehat{x}^{k}))\| \geq \eta_{2}\delta_k$}
			\STATE $\widehat{x}^{k+1}=\widehat{x}^{k}+s^{k}, \quad \delta_{k+1}=\min \left\{\gamma \delta_{k}, \delta_{\max} \right\},$
		\ELSE 
			\STATE $\widehat{x}^{k+1}=\widehat{x}^{k}, \quad \delta_{k+1}=\gamma^{-1} \delta_{k}.$
		\ENDIF
		\ENDFOR
	\end{algorithmic}
\end{algorithm}

Algorithm~\ref{TR:algorithmCL} generalizes the CLEO algorithm \cite{liu2022coupled} designed for stochastic optimization with decision-dependent distributions to stochastic minimax problems. 
Steps 3--4 of Algorithm~\ref{TR:algorithmCL} construct a local linear regression model $m_k(x, \widehat{\epsilon}^k)$ to approximate the true distribution map $m(x, \tilde{\epsilon})$ within the trust region $\mathcal{B}(\widehat{x}^k, \delta_k)$. By replacing the true distribution map with $m_k(x, \widehat{\epsilon}^k)$ in $\mathcal{B}(\widehat{x}^k, \delta_k)$, we construct the trust-region subproblem in Step 5, provided that $\mathcal{L}^{k}(\cdot)$ is strongly concave in $y$.
We will show in Section~\ref{section3} that the trust region model $\Phi^{k}(\cdot)$ is a probabilistically accurate approximation to the primal function $\Phi(\cdot)$ with a sufficiently large number of samples.
In Algorithm~\ref{TR:algorithmCL}, Step 6, we solve the trust-region subproblem \eqref{TR_trustRegionMinimization}. 
Note that $\Phi^{k}(\cdot)$ is nonconvex, we define an inequality to characterize its descent property. Motivated by \cite{liu2022coupled}, we update the decision variable to obtain a trial step satisfying the sufficient descent inequality \eqref{TR_algo_inex_suffdescent}. 
Step 8  of Algorithm~\ref{TR:algorithmCL} performs the acceptance test. Different from the aforementioned trust region methods \cite{wang2025gradientnormregularizationsecondorder,yao2024trustregiontypealgorithms,Qiu2024aquasiNewton} and CLEO in \cite{liu2022coupled},
we cannot evaluate either the primal function $\Phi(\cdot)$ or its approximation $\Phi^{k}(\cdot)$, as the true distribution map $m(\cdot)$ is unknown and the exact solution of the inner maximization problem may be unavailable. Therefore, we define the new ratio of actual-to-predicted reduction \eqref{equ:ratio} to assess the trial step $s^k$, where
\begin{equation}\label{TR_v_k}
	v_k \coloneqq \frac{1}{\vert S_k\vert}\sum_{\omega^j\in S_k}l(\widehat{x}^{k}, y^{is,*}(\widehat{x}^k), \omega^j),
\end{equation}
\begin{equation}\label{TR_v_k_2}
	v_{k+1/2} \coloneqq \frac{1}{\vert S_{k+1/2}\vert}\sum_{\omega^{j}\in S_{k+1/2}}l(\widehat{x}^{k}+s^{k}, {y}^{is,*}(\widehat{x}^{k}+s^k), \omega^{j}),
\end{equation}
are value estimates of $\Phi(\cdot)$ at $\widehat{x}^k$ and $\widehat{x}^k+s^k$, respectively, $S_k=\{\omega^i\}_{i=1}^{|S_k|}$ and $S_{k+1/2}=\{\omega^j\}_{j=1}^{|S_{k+1/2}|}$ are sample sets with $\omega^i\sim\mathcal{D}(\widehat{x}^k)$ and $\omega^j\sim\mathcal{D}(\widehat{x}^k+s^{k})$, respectively.
If the acceptance criterion holds, a new iterate is accepted and the trust-region radius is possibly increased. Otherwise the step is rejected and the trust-region radius is decreased.
Notably, in the following analysis, we characterize the accuracy of the inexact solutions in Algorithm \ref{TR:algorithmCL} as $\|y^{i,*}(\widehat{x}^{k})-y^{k,*}(\widehat{x}^{k})\|\leq\epsilon$ and $\|y^{is,*}(\widehat{x}^{k})-y^{s,*}(\widehat{x}^{k})\|\leq\epsilon$.

At the end of this section, we present some assumptions and technical lemmas.
\begin{ass}\label{TR_ass_primal}
	The primal function $\Phi(\cdot)\coloneqq \max_{y\in \mathcal Y} \mathcal{L}(\cdot,y)$ is bounded below.
\end{ass}

\begin{ass}\label{TR_ass_psi}
	The regression function $\psi(\cdot):\mathbb{R}^{n}\rightarrow \mathbb{R}^{d}$ is a twice-differentiable function, $\tilde{\epsilon}$ is a random vector with compact sample space. Moreover, $\tilde{\epsilon}$ is independent of $x$ and $y$ and has zero mean and finite variance $\Sigma$.
\end{ass}

\begin{ass}\label{Ass:smoothness_local}
	$l(\cdot)$ is $\ell_{1}$-smooth and $L_1$-Lipschitz continuous,  and $\psi(\cdot)$ is  $\ell_{0}$-smooth and  $L_0$-Lipschitz continuous. Moreover, $\mathcal{X}=\mathbb{R}^n$, $\mathcal{Y}$ is a convex and bounded set with diameter $D>0$.
\end{ass}

\begin{ass}
	\label{ass:stronglyconcave_local}
	$\ell(x,\cdot,z)$ is $\mu$-strongly concave over $\mathcal{Y}$ for any $(x,z)\in\mathbb{R}^{n}\times\mathbb{R}^{d}$.
\end{ass}
Assumption \ref{TR_ass_primal} guarantees the feasibility of SMDD \eqref{eq:f_minimax_regression}. Assumption \ref{TR_ass_psi} characterizes the properties of the regression model of the random variable. Assumptions \ref{Ass:smoothness_local}-\ref{ass:stronglyconcave_local} guarantee the uniqueness of the optimal solution for the maximization problem $\max_{y\in\mathcal{Y}}\mathcal{L}(x,y)$ and ensure that the primal function $\Phi(\cdot)$ is differentiable.

\begin{lem}\label{TR_Lemma1}
	Suppose that Assumption \ref{Ass:smoothness_local} holds and $\|\widehat{B}^{k,1}\|_F\leq B$ for $\forall k\in \mathbb{N}$. Then $\mathcal{L}^{k}(\cdot)$ is $\widehat{L}_1$-Lipschitz continuous and $\widehat{\ell_1}$-smooth with $\widehat{L}_1=L_{1}\left(1+B\right)$ and $\widehat{\ell_1}=\ell_{1} \left(1+B\right)^{2}$.
\end{lem}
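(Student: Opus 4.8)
The key observation is that, for each fixed realization of $\widehat{\epsilon}^k$, the integrand $(x,y)\mapsto l\bigl(x,y,m_k(x,\widehat{\epsilon}^k)\bigr)$ is the composition of the smooth map $l(\cdot)$ with the affine map
\[
g_k(x,y)\coloneqq\bigl(x,\;y,\;(\widehat{B}^{k,1})^\top x+(\widehat{B}^{k,0})^\top+\widehat{\epsilon}^k\bigr),
\]
whose derivative with respect to $(x,y)$ is a constant linear operator $J_k$ acting by $J_k(u,v)=\bigl(u,\,v,\,(\widehat{B}^{k,1})^\top u\bigr)$. First I would record the norm bound on $J_k$: for any $(u,v)\in\mathbb{R}^n\times\mathbb{R}^m$,
\[
\|J_k(u,v)\|^2=\|u\|^2+\|v\|^2+\bigl\|(\widehat{B}^{k,1})^\top u\bigr\|^2\le\bigl(1+\|\widehat{B}^{k,1}\|_F^2\bigr)\bigl(\|u\|^2+\|v\|^2\bigr)\le(1+B^2)\,\|(u,v)\|^2,
\]
so that $\|J_k(u,v)\|\le\sqrt{1+B^2}\,\|(u,v)\|\le(1+B)\,\|(u,v)\|$, using the elementary inequality $\sqrt{1+B^2}\le\sqrt{(1+B)^2}=1+B$. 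The same bound holds for $J_k^\top$ since the spectral norm is invariant under transposition (or by a direct computation). Since $\widehat{\epsilon}^k$ is supported on the finite set $\{e^{k,i}\}_{i=1}^{N_k}$, the expectation $\mathbb{E}_{\widehat{\epsilon}^k}[\cdot]$ is a finite average, so $\mathcal{L}^k(\cdot)$ inherits differentiability from $l(\cdot)$, differentiation commutes with the expectation, and norms may be moved inside it by the triangle inequality; these reductions require only Assumption~\ref{Ass:smoothness_local}.

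For Lipschitz continuity, fix $(x,y),(x',y')$. Since $l(\cdot)$ is $L_1$-Lipschitz, for each realization of $\widehat{\epsilon}^k$,
\[
\bigl|l(x,y,m_k(x,\widehat{\epsilon}^k))-l(x',y',m_k(x',\widehat{\epsilon}^k))\bigr|\le L_1\bigl\|g_k(x,y)-g_k(x',y')\bigr\|=L_1\bigl\|J_k(x-x',y-y')\bigr\|\le L_1(1+B)\sqrt{\|x-x'\|^2+\|y-y'\|^2}.
\]
Taking $\mathbb{E}_{\widehat{\epsilon}^k}[\cdot]$ yields $|\mathcal{L}^k(x,y)-\mathcal{L}^k(x',y')|\le L_1(1+B)\,\|(x-x',y-y')\|$, i.e. $\widehat{L}_1=L_1(1+B)$.

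For smoothness, the chain rule together with differentiation under the (finite) expectation gives $\nabla\mathcal{L}^k(x,y)=\mathbb{E}_{\widehat{\epsilon}^k}\bigl[J_k^\top\nabla l(g_k(x,y))\bigr]$. Then, applying the bound on $\|J_k^\top\cdot\|$, the $\ell_1$-smoothness of $l(\cdot)$, and the bound on $\|J_k\cdot\|$ once more,
\[
\bigl\|\nabla\mathcal{L}^k(x,y)-\nabla\mathcal{L}^k(x',y')\bigr\|\le(1+B)\,\mathbb{E}_{\widehat{\epsilon}^k}\bigl\|\nabla l(g_k(x,y))-\nabla l(g_k(x',y'))\bigr\|\le(1+B)\,\ell_1\,\mathbb{E}_{\widehat{\epsilon}^k}\bigl\|g_k(x,y)-g_k(x',y')\bigr\|\le\ell_1(1+B)^2\,\|(x-x',y-y')\|,
\]
which is exactly $\widehat{\ell_1}$-smoothness with $\widehat{\ell_1}=\ell_1(1+B)^2$. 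This argument is essentially bookkeeping; the only mildly delicate points are the spectral-norm estimate of $J_k$ (and $J_k^\top$) in terms of $\|\widehat{B}^{k,1}\|_F$ and the inequality $\sqrt{1+B^2}\le1+B$, and I would note in passing that the boundedness of $\mathcal{Y}$ is not needed for this particular lemma.
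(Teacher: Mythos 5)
Your proposal is correct and follows essentially the same route as the paper: both arguments push the expectation inside via the triangle/Jensen inequality and then combine the $L_1$-Lipschitz continuity and $\ell_1$-smoothness of $l(\cdot)$ with the bound $\|\widehat{B}^{k,1}\|_F\leq B$ on the affine substitution $x\mapsto(\widehat{B}^{k,1})^\top x+(\widehat{B}^{k,0})^\top+\widehat{\epsilon}^k$, the only cosmetic difference being that you package the factor $1+B$ as an operator-norm bound on the Jacobian $J_k$ and treat the full gradient at once, whereas the paper bounds $\nabla_x\mathcal{L}^k$ and $\nabla_y\mathcal{L}^k$ blockwise. Your side remarks (finiteness of the empirical expectation justifying differentiation under $\mathbb{E}_{\widehat{\epsilon}^k}$, and the boundedness of $\mathcal{Y}$ being unnecessary here) are accurate.
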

\begin{proof}
	By the definition of $\mathcal{L}^{k}(\cdot)$, we have
	\begin{equation*}
		\begin{aligned}
			\left|\mathcal{L}^{k}(x,y)-\mathcal{L}^{k}(x^{\prime},y^{\prime})\right|=&\left|\underset{\widehat{\epsilon}^{k}}{\mathbb{E}}\left[l(x,y,m_{k}(x, \widehat{\epsilon}^{k}))\right]-\underset{\widehat{\epsilon}^{k}}{\mathbb{E}}\left[l(x^{\prime},y^{\prime},m_k(x^{\prime},\widehat{\epsilon}^{k}))\right]\right|\\
			\leq& \underset{\widehat{\epsilon}^{k}}{\mathbb{E}}\left|l(x,y,m_{k}(x, \widehat{\epsilon}^{k}))- l(x^{\prime},y^{\prime},m_k(x^{\prime},\widehat{\epsilon}^{k})\right|\\
			\leq&L_{1}\left(\|x-x^{\prime}\|+\|y-y^{\prime}\|+\|\left(\widehat{B}^{k, 1}\right)^{\top}(x-x^{\prime})\|\right)\\
			\leq&L_{1}\left(1+B\right)\left(\left\|x-x^{\prime} \right\|+\left\|y-y^{\prime} \right\|\right),
		\end{aligned}
	\end{equation*}
	where the first inequality follows from Jensen's inequality and the second inequality follows from $L_1$-Lipschitz continuity of $l(\cdot)$.

	For smoothness of $\mathcal{L}^{k}(\cdot)$, 
	\begin{equation}
		\begin{aligned}
			&\nabla_{x}\mathcal{L}^{k}(x,y)=\underset{\widehat{\epsilon}^{k}}{\mathbb{E}}\left[\nabla_{1}l(x,y,m_{k}(x,\widehat{\epsilon}^{k}))+(\widehat{B}^{k, 1})\nabla_{3}l\left(x,y,m_{k}(x,\widehat{\epsilon}^{k})\right)\right],\\
			&\nabla_{y}\mathcal{L}^{k}(x,y)=\underset{\widehat{\epsilon}^{k}}{\mathbb{E}}\left[\nabla_{y}l(x,y,m_{k}(x,\widehat{\epsilon}^{k}))\right],
		\end{aligned}
	\end{equation}
	we have
	\begin{equation*}
		\begin{aligned}
			&\left\|\nabla_{x}\mathcal{L}^{k}(x,y)-\nabla_{x}\mathcal{L}^{k}(x^{\prime},y^{\prime})\right\|\\
			\leq&\underset{\widehat{\epsilon}^{k}}{\mathbb{E}}\left\|\nabla_{1}l(x,y,m_{k}(x,\widehat{\epsilon}^{k})) - \nabla_{1}l(x^{\prime},y^{\prime},m_{k}(x^{\prime},\widehat{\epsilon}^{k})) \right\|\\
			&+\underset{\widehat{\epsilon}^{k}}{\mathbb{E}}\left\|(\widehat{B}^{k, 1})\nabla_{3}l\left(x,y,m_{k}(x,\widehat{\epsilon}^{k})\right)- (\widehat{B}^{k, 1})\nabla_{3}l(x^{\prime},y^{\prime},m_{k}(x^{\prime},\widehat{\epsilon}^{k})) \right\|\\
			\leq&\underset{\widehat{\epsilon}^{k}}{\mathbb{E}}\left\|\nabla_{1}l(x,y,m_{k}(x,\widehat{\epsilon}^{k})) - \nabla_{1}l(x^{\prime},y^{\prime},m_{k}(x^{\prime},\widehat{\epsilon}^{k})) \right\|\\
			&+\left\|\widehat{B}^{k, 1}\right\|_F \underset{\widehat{\epsilon}^{k}}{\mathbb{E}}\left\|\nabla_{3}l\left(x,y,m_{k}(x,\widehat{\epsilon}^{k})\right)- \nabla_{3}l(x^{\prime},y^{\prime},m_{k}(x^{\prime},\widehat{\epsilon}^{k})) \right\|\\
			\leq& \ell_{1}\left(1+B\right)^{2}\left\|(x,y)-(x^{\prime},y^{\prime})\right\|,
		\end{aligned}
	\end{equation*}
	where the first inequality follows from triangle inequality and Jensen's inequality, the second inequality follows from Cauchy-Schwarz inequality and the last inequality follows from $\ell_1$-smoothness of $l(\cdot)$.
	
	Similarly,
	\begin{equation*}\label{equ:lemma2_grady}
		\begin{aligned}
			&\left\|\nabla_{y}\mathcal{L}^{k}(x,y)-\nabla_{y}\mathcal{L}^{k}(x^{\prime},y^{\prime})\right\|
			\\
			\leq&\underset{\widehat{\epsilon}^{k}}{\mathbb{E}}\left\|\nabla_{y}l(x,y,m_{k}(x,\widehat{\epsilon}^{k})) - \nabla_{y}l(x^{\prime},y^{\prime},m_{k}(x^{\prime},\widehat{\epsilon}^{k}))\right\|\\
			\leq& \ell_{1}\left(1+B\right)\left\|(x,y)-(x^{\prime},y^{\prime})\right\|.
		\end{aligned}
	\end{equation*}
	
	Then,
	\begin{equation*}
		\left\|\nabla_{y}\mathcal{L}^{k}(x,y)-\nabla_{y}\mathcal{L}^{k}(x^{\prime},y^{\prime})\right\|\leq\ell_{1} \left(1+B\right)^{2}
\left\|(x,y)-(x^{\prime},y^{\prime})\right\|.		
	\end{equation*}
	The proof is complete.
\end{proof}
\begin{lem}\label{TR_Lemma2}
	Suppose Assumptions \ref{Ass:smoothness_local}--\ref{ass:stronglyconcave_local} hold, and there exist some constants $\delta_k, \tilde{\kappa}_{dcp} > 0$ such that
	\begin{equation}\label{TR_eq_descent_inequality}
		\Phi^{k}(\widehat{x}^{k})-\Phi^{k}(\widehat{x}^{k}+\tilde{s}^k)\geq \tilde{\kappa}_{dcp}\|\nabla\Phi^{k}(\widehat{x}^{k})\|\min\{\delta_k,1\}, \forall\widehat{x}^{k}\in\mathbb{R}^n
	\end{equation}
	with 
	\begin{equation*}\label{TR_eq_descent_inequality_sk}
		\begin{aligned}
			\tilde{s}^k = - \delta_{k} \nabla\Phi^{k}(\widehat{x}^k)/\|\nabla\Phi^{k}(\widehat{x}^k)\|.
		\end{aligned}
	\end{equation*}
	Suppose also that the inexact solution $y^{i,*}(x)$ satisfies
	$$
	\|y^{i,*}(x)-y^{k,*}(x)\|\leq \epsilon,\; \forall x \in \mathcal{B}(\widehat{x}^{k}, \delta_{k}),
	$$
	where $\epsilon \leq \min\left\{\frac{\|\nabla_{1} \mathcal{L}^{k}(\widehat{x}^{k},y^{i,*}(\widehat{x}^{k}))\|}{2\widehat{\ell}_{1}},\frac{\|\nabla\Phi^{k}(\widehat{x}^{k})\|\|\nabla_{1} \mathcal{L}^{k}(\widehat{x}^{k},y^{i,*}(\widehat{x}^{k}))\|\tilde{\kappa}_{dcp}}{8\widehat{L}_{1}\widehat{\ell}_{1}\max\{\delta_{max},1\}},\frac{\|\nabla_{1} \mathcal{L}^{k}(\widehat{x}^{k},y^{i,*}(\widehat{x}^{k}))\|\min\{\delta_k,1\}\tilde{\kappa}_{dcp}}{16\widehat{L}_1}\right\}$, and  $y^{k,*}(x)\coloneqq \arg\max_{y\in \mathcal{Y}}\mathcal{L}^k{(x,y)}$ is the exact solution.
 	Then, there exists $ \kappa_{dcp} > 0$ such that
	\begin{equation}
		\begin{aligned}
			\label{prop1-1}
			\mathcal{L}^{k}(\widehat{x}^k,y^{i,*}(\widehat{x}^k))-\mathcal{L}^{k}(\widehat{x}^k+s^k,y^{i,*}(\widehat{x}^k+s^k)) \geq \kappa_{dcp} \|\nabla_{1} \mathcal{L}^{k}(\widehat{x}^{k},y^{i,*}(\widehat{x}^{k}))\|\min \left\{\delta_{k}, 1\right\},
		\end{aligned}
	\end{equation}
	with
	\begin{equation*}
		\begin{aligned}
			\label{prop1-2}
			s^{k}= - \delta_{k} \nabla_{1} \mathcal{L}^{k}(\widehat{x}^{k},y^{i,*}(\widehat{x}^{k}))/\|\nabla_{1} \mathcal{L}^{k}(\widehat{x}^{k},y^{i,*}(\widehat{x}^{k}))\|
		\end{aligned}
	\end{equation*}
	and $\kappa_{dcp}=\frac{1}{8}\tilde{\kappa}_{dcp}.$
\end{lem}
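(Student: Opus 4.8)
The plan is to derive the inexact $\mathcal{L}^k$-descent inequality \eqref{prop1-1} from the exact $\Phi^k$-descent hypothesis \eqref{TR_eq_descent_inequality}, absorbing three distinct error terms into the three components of the prescribed upper bound on $\epsilon$. Write $g\coloneqq\nabla\Phi^k(\widehat{x}^k)$ and $\widehat{g}\coloneqq\nabla_1\mathcal{L}^k(\widehat{x}^k,y^{i,*}(\widehat{x}^k))$ (we may assume $\widehat{g}\neq 0$, otherwise $s^k$ is undefined and there is nothing to prove). By Assumptions \ref{Ass:smoothness_local}--\ref{ass:stronglyconcave_local}, $\mathcal{L}^k(x,\cdot)$ is strongly concave on $\mathcal{Y}$, hence $y^{k,*}(x)$ is well defined, a Danskin-type argument gives that $\Phi^k$ is differentiable with $g=\nabla_1\mathcal{L}^k(\widehat{x}^k,y^{k,*}(\widehat{x}^k))$, and by Lemma \ref{TR_Lemma1} $\Phi^k$ is $\widehat{L}_1$-Lipschitz and $\mathcal{L}^k$ is $\widehat{\ell}_1$-smooth.

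First I would compare the two gradient directions. The $\widehat{\ell}_1$-smoothness of $\mathcal{L}^k$ and $\|y^{k,*}(\widehat{x}^k)-y^{i,*}(\widehat{x}^k)\|\le\epsilon$ give $\|g-\widehat{g}\|\le\widehat{\ell}_1\epsilon$, and the first term in the bound on $\epsilon$ yields $\tfrac12\|\widehat{g}\|\le\|g\|\le\tfrac32\|\widehat{g}\|$, so in particular $g\neq0$ and $\tilde{s}^k$ is well defined. Next, using the elementary inequality $\left\|\tfrac{a}{\|a\|}-\tfrac{b}{\|b\|}\right\|\le\tfrac{2\|a-b\|}{\|b\|}$ with $a=\widehat{g},\,b=g$ together with $\|g\|\ge\tfrac12\|\widehat{g}\|$, I would bound $\|s^k-\tilde{s}^k\|=\delta_k\left\|\tfrac{\widehat{g}}{\|\widehat{g}\|}-\tfrac{g}{\|g\|}\right\|\le\tfrac{4\widehat{\ell}_1\epsilon\,\delta_k}{\|\widehat{g}\|}$; then the $\widehat{L}_1$-Lipschitz continuity of $\Phi^k$, the elementary bound $\delta_k\le\max\{\delta_{max},1\}\min\{\delta_k,1\}$, and the second term in the bound on $\epsilon$ give $|\Phi^k(\widehat{x}^k+s^k)-\Phi^k(\widehat{x}^k+\tilde{s}^k)|\le\tfrac{\tilde{\kappa}_{dcp}}{2}\|g\|\min\{\delta_k,1\}$.

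Combining this last estimate with the descent hypothesis \eqref{TR_eq_descent_inequality} for $\tilde{s}^k$, and using $\|g\|\ge\tfrac12\|\widehat{g}\|$ once more, produces $\Phi^k(\widehat{x}^k)-\Phi^k(\widehat{x}^k+s^k)\ge\tfrac{\tilde{\kappa}_{dcp}}{2}\|g\|\min\{\delta_k,1\}\ge\tfrac{\tilde{\kappa}_{dcp}}{4}\|\widehat{g}\|\min\{\delta_k,1\}$. Finally I would pass to $\mathcal{L}^k$: since $y^{k,*}(\widehat{x}^k)$ maximizes $\mathcal{L}^k(\widehat{x}^k,\cdot)$ over $\mathcal{Y}$ and $\mathcal{L}^k$ is $\widehat{L}_1$-Lipschitz, $0\le\Phi^k(\widehat{x}^k)-\mathcal{L}^k(\widehat{x}^k,y^{i,*}(\widehat{x}^k))\le\widehat{L}_1\epsilon\le\tfrac{\tilde{\kappa}_{dcp}}{16}\|\widehat{g}\|\min\{\delta_k,1\}$ by the third term in the bound on $\epsilon$, while $\mathcal{L}^k(\widehat{x}^k+s^k,y^{i,*}(\widehat{x}^k+s^k))\le\Phi^k(\widehat{x}^k+s^k)$ since $\|s^k\|=\delta_k$ ensures $\widehat{x}^k+s^k\in\mathcal{B}(\widehat{x}^k,\delta_k)$ and $y^{i,*}(\widehat{x}^k+s^k)\in\mathcal{Y}$. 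Chaining these three inequalities gives $\mathcal{L}^k(\widehat{x}^k,y^{i,*}(\widehat{x}^k))-\mathcal{L}^k(\widehat{x}^k+s^k,y^{i,*}(\widehat{x}^k+s^k))\ge\tfrac{3}{16}\tilde{\kappa}_{dcp}\|\widehat{g}\|\min\{\delta_k,1\}\ge\tfrac18\tilde{\kappa}_{dcp}\|\widehat{g}\|\min\{\delta_k,1\}$, i.e.\ \eqref{prop1-1} with $\kappa_{dcp}=\tfrac18\tilde{\kappa}_{dcp}$.

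The step I expect to be the main obstacle is the comparison of $\Phi^k(\widehat{x}^k+s^k)$ and $\Phi^k(\widehat{x}^k+\tilde{s}^k)$: it requires the normalization-perturbation estimate for the two gradient directions and a careful interplay of $\|g\|$ versus $\|\widehat{g}\|$, of $\delta_k$ versus $\min\{\delta_k,1\}$, and of the precise numerical constants in the three parts of the $\epsilon$-bound, so that the accumulated losses collapse to exactly the factor $\tfrac18$. A secondary, essentially routine, point is justifying the Danskin-type identity $\nabla\Phi^k(\widehat{x}^k)=\nabla_1\mathcal{L}^k(\widehat{x}^k,y^{k,*}(\widehat{x}^k))$ and the $\widehat{L}_1$-Lipschitz continuity of $\Phi^k$ from Lemma \ref{TR_Lemma1} and Assumption \ref{ass:stronglyconcave_local}.
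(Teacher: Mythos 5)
Your proof is correct and follows essentially the same route as the paper: decompose the $\mathcal{L}^k$-decrease into the $\Phi^k$-decrease plus inexactness errors, invoke the descent hypothesis at $\tilde{s}^k$, control $\Phi^k(\widehat{x}^k+\tilde{s}^k)-\Phi^k(\widehat{x}^k+s^k)$ via the Lipschitz continuity of $\Phi^k$ and a normalization estimate on the two gradient directions, and compare $\|\nabla\Phi^k(\widehat{x}^k)\|$ with $\|\nabla_1\mathcal{L}^k(\widehat{x}^k,y^{i,*}(\widehat{x}^k))\|$ through $\widehat{\ell}_1$-smoothness, with the three parts of the $\epsilon$-bound absorbing the three error sources exactly as in the paper. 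The only (harmless) deviations are cosmetic: you use the one-sided bound $\mathcal{L}^k(\widehat{x}^k+s^k,y^{i,*}(\widehat{x}^k+s^k))\le\Phi^k(\widehat{x}^k+s^k)$ instead of a second Lipschitz estimate, and your explicit two-sided comparison $\tfrac12\|\widehat{g}\|\le\|g\|\le\tfrac32\|\widehat{g}\|$ makes the constant bookkeeping slightly cleaner, landing at $\tfrac{3}{16}\tilde{\kappa}_{dcp}\ge\tfrac18\tilde{\kappa}_{dcp}$.
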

\begin{proof}
	Recall the definitions of $\mathcal{L}^{k}(\cdot)$ and  $\Phi^{k}(\cdot)$ in \eqref{TR_notations} and \eqref{TR_trustRegionMinimization}, 
	\begin{equation*}
		\begin{aligned}
			& \Phi^{k}(\widehat{x}^k)-\mathcal{L}^{k}(\widehat{x}^{k},y^{i,*}(\widehat{x}^k))+\mathcal{L}^{k}\left(\widehat{x}^{k}+s^k,y^{i,*}(\widehat{x}^k+s^k)\right)-\Phi^{k}(\widehat{x}^{k}+s^k) \\
			\leq & \widehat{L}_1 \left\|y^{k,*}(\widehat{x}^k)-y^{i,*}(\widehat{x}^k)\right\|+
			\widehat{L}_1 \left\|y^{k,*}(\widehat{x}^k+s^k)-y^{i,*}(\widehat{x}^k+s^k)\right\|\\
			\leq & 2\widehat{L}_1 \epsilon,
		\end{aligned}
	\end{equation*}
	where the first inequality follows from $\widehat{L}_1$-Lipschitz continuity of $\mathcal{L}^{k}(\cdot)$ and the second inequality follows from the setting of the inexact solution $y^{i,*}(\cdot)$.
	
	Then,
	\begin{equation*}
		\begin{aligned}				&\mathcal{L}^{k}(\widehat{x}^k,y^{i,*}(\widehat{x}^k))-\mathcal{L}^{k}(\widehat{x}^k+s^k,y^{i,*}(\widehat{x}^k+s^k))\\
			\geq &  \Phi^{k}(\widehat{x}^k) - \Phi^{k}(\widehat{x}^{k}+s^k) - 2L_1 \epsilon\\
			\geq & \tilde{\kappa}_{dcp} \underbrace{\|\nabla\Phi^{k}(\widehat{x}^{k})\|}_{I_1} \min \{\delta_k, 1\} + \underbrace{\Phi^{k}(\widehat{x}^{k}+\tilde{s}^k) - \Phi^{k}(\widehat{x}^{k}+s^k)}_{I_2} - 2L_1 \epsilon,
		\end{aligned}
	\end{equation*}
	where the second inequality follows from \eqref{TR_eq_descent_inequality}.
	
	For $I_1$, by $\widehat{\ell}_1$-smoothness of $\mathcal{L}^{k}(\cdot)$,
	\begin{equation*}
		\begin{aligned}
			I_1
			&\geq \|\nabla_{1} \mathcal{L}^{k}(\widehat{x}^{k},y^{i,*}(\widehat{x}^{k}))\|-\widehat{\ell}_1 \|y^{k,*}(\widehat{x}^{k})-y^{i,*}(\widehat{x}^{k})\|\\
			&\geq \|\nabla_{1} \mathcal{L}^{k}(\widehat{x}^{k},y^{i,*}(\widehat{x}^{k}))\|- \widehat{\ell}_1 \epsilon.
		\end{aligned}
	\end{equation*}

	For $I_2$, since $\Phi^k(\cdot)$ is $\widehat{L}_1$-Lipschitz continuous by \cite[Lemma 4.7]{lin2020gradient}, we have
	\begin{equation*}
		\begin{aligned}
			I_2=&\Phi^k(\widehat{x}^k + s^k)-\Phi^k(\widehat{x}^k + \tilde{s}^{k})\\
			\leq& \widehat{L}_{1} \|s^k-\tilde{s}^{k}\|\\
			=&\widehat{L}_{1}\delta_k\left\|\frac{\nabla_{1} \mathcal{L}^{k}(\widehat{x}^{k},y^{i,*}(\widehat{x}^{k}))}{\|\nabla_{1} \mathcal{L}^{k}(\widehat{x}^{k},y^{i,*}(\widehat{x}^{k}))\|}-\frac{\nabla_{x} \Phi^{k}(\widehat{x}^{k})}{\|\nabla\Phi^{k}(\widehat{x}^{k})\|}\right\|\\
			=&\widehat{L}_{1}\delta_k  \left\|\frac{\|\nabla\Phi^{k}(\widehat{x}^{k})\| \nabla_{1} \mathcal{L}^{k}(\widehat{x}^{k},y^{i,*}(\widehat{x}^{k}))-\|\nabla_{1} \mathcal{L}^{k}(\widehat{x}^{k},y^{i,*}(\widehat{x}^{k}))\|\nabla_{x} \Phi^{k}(\widehat{x}^{k})}{\|\nabla_{1} \mathcal{L}^{k}(\widehat{x}^{k},y^{i,*}(\widehat{x}^{k}))\|\nabla\Phi^{k}(\widehat{x}^{k})}\right\| \\
			\leq& \widehat{L}_{1}\delta_k \frac{ \|\nabla_{1} \mathcal{L}^{k}(\widehat{x}^{k},y^{i,*}(\widehat{x}^{k}))\|\left(\|\nabla\Phi^{k}(\widehat{x}^{k})\|- \|\nabla_{1} \mathcal{L}^{k}(\widehat{x}^{k},y^{i,*}(\widehat{x}^{k}))\|\right)}{\|\nabla_{1} \mathcal{L}^{k}(\widehat{x}^{k},y^{i,*}(\widehat{x}^{k}))\|\|\nabla\Phi^{k}(\widehat{x}^{k})\|}\\
			&+\widehat{L}_{1}\delta_k  \frac{\|\nabla_{1} \mathcal{L}^{k}(\widehat{x}^{k},y^{i,*}(\widehat{x}^{k}))\| \| \nabla_{1} \mathcal{L}^{k}(\widehat{x}^{k},y^{i,*}(\widehat{x}^{k}))-\nabla\Phi^{k}(\widehat{x}^{k})\|}{{\|\nabla_{1} \mathcal{L}^{k}(\widehat{x}^{k},y^{i,*}(\widehat{x}^{k}))\|\|\nabla\Phi^{k}(\widehat{x}^{k})\|}}\\
			\leq & \widehat{L}_{1}\delta_k \frac{2\|\nabla_{1} \mathcal{L}^{k}(\widehat{x}^{k},y^{i,*}(\widehat{x}^{k}))\|\| \nabla_{x} \mathcal{L}^{k}(\widehat{x}^{k},y^{k,*}(\widehat{x}^{k}))- \nabla_{1} \mathcal{L}^{k}(\widehat{x}^{k},y^{i,*}(\widehat{x}^{k}))\| }{\|\nabla_{1} \mathcal{L}^{k}(\widehat{x}^{k},y^{i,*}(\widehat{x}^{k}))\|\|\nabla\Phi^{k}(\widehat{x}^{k})\|}\\
			\leq & \frac{2\widehat{L}_{1}\widehat{\ell}_{1}\delta_k}{\|\nabla\Phi^{k}(\widehat{x}^{k})\|}\|y^{k,*}(\widehat{x}^{k})-y^{i,*}(\widehat{x}^{k})\|\\
			\leq & \frac{2\widehat{L}_{1}\widehat{\ell}_{1}\delta_k}{\|\nabla\Phi^{k}(\widehat{x}^{k})\|}\epsilon,
		\end{aligned}
	\end{equation*}
	where the second and third inequalities follow from triangle inequality, and the fourth inequality follows from $\widehat{\ell}_1$-smoothness of $\mathcal{L}^{k}(\cdot)$.
	
	Then
	\begin{equation*}
		\begin{aligned}				&\mathcal{L}^{k}(\widehat{x}^k,y^{i,*}(\widehat{x}^k))-\mathcal{L}^{k}(\widehat{x}^k+s^k,y^{i,*}(\widehat{x}^k+s^k))\\
			\geq & \tilde{\kappa}_{dcp} \left(\|\nabla_{1} \mathcal{L}^{k}(\widehat{x}^{k},y^{i,*}(\widehat{x}^{k}))\| - \ell_{1} \epsilon\right) \min \{\delta_k, 1\} - \frac{2\widehat{L}_{1}\widehat{\ell}_{1}\delta_k}{\|\nabla\Phi^{k}(\widehat{x}^{k})\|}\epsilon - 2\widehat{L}_1 \epsilon.
		\end{aligned}
	\end{equation*}
	Given 
	\begin{small}
	\begin{equation*}
	\epsilon \leq \min\left\{\frac{\|\nabla_{1} \mathcal{L}^{k}(\widehat{x}^{k},y^{i,*}(\widehat{x}^{k}))\|}{2\widehat{\ell}_{1}},\frac{\|\nabla\Phi^{k}(\widehat{x}^{k})\|\|\nabla_{1} \mathcal{L}^{k}(\widehat{x}^{k},y^{i,*}(\widehat{x}^{k}))\|\tilde{\kappa}_{dcp}}{8\widehat{L}_{1}\widehat{\ell}_{1}\max\{\delta_{max},1\}},\frac{\|\nabla_{1} \mathcal{L}^{k}(\widehat{x}^{k},y^{i,*}(\widehat{x}^{k}))\|\min\{\delta_k,1\}\tilde{\kappa}_{dcp}}{16\widehat{L}_1}\right\},
	\end{equation*}
	\end{small}
	inequality \eqref{prop1-1} holds.
	The proof is complete.
\end{proof}

\section{Convergence Analysis}\label{section3}
In this section, we establish the almost sure convergence of the sequence $\{\widehat{x}^k\}$ generated by Algorithm~\ref{TR:algorithmCL} to a stationary point of SMDD~\eqref{eq:f_minimax_regression}. We begin with two propositions that characterize the probabilistic accuracy of the trust-region model and the value estimates. 
In the following analysis, we denote the random process as
$\{\widehat{X}^{k}, S^{k}, \Delta_{k}, \widehat{\Phi}^{k}(\cdot), V_k, V_{k+1/2}, M_k(\cdot)\},$
with the realizations
$\{\widehat{x}^{k}, s^{k}, \delta_{k}, \Phi^{k}(\cdot), v_k, v_{k+1/2}, m_k(\cdot)\}$, $\mathcal{F}_{k-1}$ as the $\sigma$-algebra generated by
$\{\widehat{\Phi}^{i}(\cdot), V_i, V_{i+1/2}\}_{i=0}^{k-1}$, $\mathcal{F}_{k-1/2}$ as the $\sigma$-algebra generated by
$\{\widehat{\Phi}^{i}(\cdot)\}_{i=0}^{k}$ and $\{V_i, V_{i+1/2}\}_{i=0}^{k-1}$, $\mathbb{P}(\cdot \mid \mathcal{F}_{k-1})$ and $\mathbb{P}(\cdot \mid \mathcal{F}_{k-1/2})$ as the the corresponding conditional probabilities. 

\begin{prop}\label{TR_proposition_1}
	Let $\widehat{x}^k\in \mathbb{R}^{n}$ be the iterate at the $k$-th iteration of Algorithm \ref{TR:algorithmCL}, $\delta_{k}>0$ be the trust region radius,  $T_k=\{(x^{i},\omega^{i})\}$ be the data set of size $N_k$ in the algorithm for local linear regression and $\alpha\in (0,1)$ be a probability parameter. 
	Suppose Assumptions \ref{TR_ass_primal}--\ref{ass:stronglyconcave_local} hold and the set $\{x^i\}$ is strongly $\Lambda$-poised \footnote{Strongly $\Lambda$-poised condition on the data samples guarantees the uniqueness of linear regression and allows the number of samples to grow arbitrarily large. 
	For the definition and procedure to generate a strongly $\Lambda$-poised set, see\cite{liu2022coupled} and \cite[Algorithm~6.7]{Conn2000Trustregionmethods}.}. Then
	
	{\rm(i)} There exists a constant $\kappa_{ef}>0$ such that
	$$\mathbb{P}\left(\lvert\Phi(x)-\widehat{\Phi}^{k}(x)\rvert\geq \kappa_{ef}\Delta^{2}_{k}, \exists x\in \mathcal{B}(\widehat{X}^{k}, \Delta_{k})\mid \mathcal{F}_{k-1}\right)\leq \frac{\alpha}{2},$$
	if $N_k \geq \max\{\mathcal{O}(\delta_k^{-4}\kappa_{ef}^{-2}\alpha^{-1}),\mathcal{O}(\alpha^{-2})\}$.
	
	{\rm(ii)} There exists a constant $\kappa_{ed}>0$ such that
	$$\mathbb{P}\left(\|\nabla\Phi(x)\|-\|\nabla\widehat{\Phi}^{k}(x)\|\geq \kappa_{ed}\Delta_{k}, \exists x\in \mathcal{B}(\widehat{X}^{k}, \Delta_{k})\mid \mathcal{F}_{k-1}\right)\leq \frac{\alpha}{2},$$
	if $N_{k}\geq \max\{\mathcal{O}(\delta_{k}^{-4}\kappa_{ed}^{-2}),\mathcal{O}(\alpha^{-2}),
	\mathcal{O}(\delta_{k}^{-2}\kappa_{ed}^{-2}\alpha^{-1})\}$.
\end{prop}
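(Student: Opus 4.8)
The plan is to show that, working with the realizations $\Phi^k,\mathcal{L}^k,m_k$ of the random objects $\widehat{\Phi}^k,\dots$ and conditioning on $\mathcal{F}_{k-1}$ (so that $\widehat{x}^k=\widehat{X}^k$ and $\delta_k=\Delta_k$ are fixed), the local linear regression (LLR) model is, with probability at least $1-\alpha/2$, a \emph{fully linear} model of the regression function $\psi(\cdot)$ on $\mathcal{B}(\widehat{x}^k,\delta_k)$; this accuracy is then propagated through the $\mu$-strongly concave inner maximization, and the resulting deterministic and statistical errors are balanced against $\kappa_{ef}\Delta_k^2$ in~(i) and $\kappa_{ed}\Delta_k$ in~(ii). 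The starting point --- and the technical heart --- is a ``noisy fully linear model'' estimate for the least-squares fit $\ell^k(x)\coloneqq(\widehat{B}^{k,1})^\top x+(\widehat{B}^{k,0})^\top$. First I would invoke the linear-regression analysis underlying CLEO \cite{liu2022coupled,Conn2000Trustregionmethods}: since $\{x^i\}$ is strongly $\Lambda$-poised and $\psi(\cdot)$ is twice differentiable and $\ell_0$-smooth (Assumptions~\ref{TR_ass_psi} and~\ref{Ass:smoothness_local}), the least-squares fit of the \emph{noiseless} data $\{(x^i,\psi(x^i))\}$ approximates $\psi$ to within $\mathcal{O}(\delta_k^2)$ in value and $\mathcal{O}(\delta_k)$ in its linear coefficient on $\mathcal{B}(\widehat{x}^k,\delta_k)$. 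Since the observed outputs are $\omega^i=\psi(x^i)+\widetilde{\epsilon}^i$ with the $\widetilde{\epsilon}^i$ i.i.d., zero mean and compactly supported (Assumption~\ref{TR_ass_psi}), I would then control, via poisedness of the design and Chebyshev's inequality, the perturbation of the least-squares solution due to the noise, obtaining on an event of probability at least $1-\alpha/4$ (for $N_k$ large enough) constants $\kappa_1,\kappa_2>0$ with $\|\ell^k(x)-\psi(x)\|\le\kappa_1\delta_k^2$ and $\|\widehat{B}^{k,1}-\nabla\psi(x)\|_F\le\kappa_2\delta_k$ for all $x\in\mathcal{B}(\widehat{x}^k,\delta_k)$, where $\nabla\psi(x)$ denotes the Jacobian-type matrix of $\psi$ that enters $\nabla_x\mathcal{L}$ just as $\widehat{B}^{k,1}$ enters $\nabla_x\mathcal{L}^k$ in Lemma~\ref{TR_Lemma1}. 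The residuals then satisfy $e^{k,i}=\widetilde{\epsilon}^i+(\psi(x^i)-\ell^k(x^i))$, so $\|e^{k,i}-\widetilde{\epsilon}^i\|\le\kappa_1\delta_k^2$, and hence for any Lipschitz $g$ one has $|\mathbb{E}_{\widehat{\epsilon}^k}[g]-\mathbb{E}_{\widetilde{\epsilon}}[g]|\le\mathcal{O}(\delta_k^2)+|\tfrac{1}{N_k}\sum_i g(\widetilde{\epsilon}^i)-\mathbb{E}_{\widetilde{\epsilon}}[g]|$, whose last term is again controlled by Chebyshev with probability at least $1-\alpha/4$.

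Next I would transfer these estimates to $\mathcal{L}^k$ versus $\mathcal{L}$ and deduce~(i). Writing $\mathcal{L}^k(x,y)=\mathbb{E}_{\widehat{\epsilon}^k}[l(x,y,\ell^k(x)+\widehat{\epsilon}^k)]$ and $\mathcal{L}(x,y)=\mathbb{E}_{\widetilde{\epsilon}}[l(x,y,\psi(x)+\widetilde{\epsilon})]$, the $L_1$-Lipschitzness and $\ell_1$-smoothness of $l$, the Step~1 bounds, and the gradient formulas of Lemma~\ref{TR_Lemma1} give, uniformly over $y\in\mathcal{Y}$ and $x\in\mathcal{B}(\widehat{x}^k,\delta_k)$ and on the event above, $|\mathcal{L}^k(x,y)-\mathcal{L}(x,y)|\le C_1(\delta_k^2+r_k)$, $\|\nabla_y\mathcal{L}^k(x,y)-\nabla_y\mathcal{L}(x,y)\|\le C_2(\delta_k^2+r_k)$ and $\|\nabla_x\mathcal{L}^k(x,y)-\nabla_x\mathcal{L}(x,y)\|\le C_3(\delta_k+r_k)$, where $r_k$ is the Step~1 statistical error and the slower $\mathcal{O}(\delta_k)$ rate in the last bound comes solely from the term $(\widehat{B}^{k,1}-\nabla\psi(x))\nabla_3 l$. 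Because $t(\cdot)\mapsto\max_{y\in\mathcal{Y}}t(y)$ is $1$-Lipschitz in the supremum norm, $|\Phi(x)-\Phi^k(x)|\le\sup_{y\in\mathcal{Y}}|\mathcal{L}^k(x,y)-\mathcal{L}(x,y)|\le C_1(\delta_k^2+r_k)$ on the same event; imposing $N_k\ge\max\{\mathcal{O}(\delta_k^{-4}\kappa_{ef}^{-2}\alpha^{-1}),\mathcal{O}(\alpha^{-2})\}$ makes $r_k\le\delta_k^2$ and keeps the overall failure probability at $\alpha/2$, which proves~(i) with $\kappa_{ef}\coloneqq 2C_1$.

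For~(ii) I would bring in Danskin's theorem. By Assumptions~\ref{Ass:smoothness_local}--\ref{ass:stronglyconcave_local} and Lemma~\ref{TR_Lemma1}, both $\mathcal{L}(x,\cdot)$ and $\mathcal{L}^k(x,\cdot)$ are $\mu$-strongly concave and smooth, so by \cite[Lemma~4.7]{lin2020gradient} the maximizers $y^{*}(x)$ and $y^{k,*}(x)$ are unique and $\nabla\Phi(x)=\nabla_x\mathcal{L}(x,y^{*}(x))$, $\nabla\Phi^k(x)=\nabla_x\mathcal{L}^k(x,y^{k,*}(x))$; moreover strong concavity gives $\|y^{*}(x)-y^{k,*}(x)\|\le\tfrac{1}{\mu}\sup_{y\in\mathcal{Y}}\|\nabla_y\mathcal{L}^k(x,y)-\nabla_y\mathcal{L}(x,y)\|$. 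Combining with Step~2, $\|\nabla\Phi(x)-\nabla\Phi^k(x)\|\le\|\nabla_x\mathcal{L}(x,y^{*}(x))-\nabla_x\mathcal{L}^k(x,y^{*}(x))\|+\widehat{\ell}_1\|y^{*}(x)-y^{k,*}(x)\|\le C_4(\delta_k+r_k)$ uniformly on $\mathcal{B}(\widehat{x}^k,\delta_k)$; taking $N_k$ at least the three stated thresholds makes $r_k\le\delta_k$ and keeps the failure probability at $\alpha/2$, and the reverse triangle inequality $\big|\|\nabla\Phi(x)\|-\|\nabla\Phi^k(x)\|\big|\le\|\nabla\Phi(x)-\nabla\Phi^k(x)\|$ then yields~(ii) with $\kappa_{ed}\coloneqq 2C_4$.

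The hard part will be Step~1: turning CLEO's linear-regression analysis into a usable \emph{noisy}, uniform-over-the-ball estimate that simultaneously controls the coefficient perturbation $\widehat{B}^{k,1}-\nabla\psi$ and the deviation of the empirical residual law $\widehat{\epsilon}^k$ from the law of $\widetilde{\epsilon}$ --- this is where the strongly $\Lambda$-poised hypothesis, the compactness of $\supp\widetilde{\epsilon}$, and the sample-size lower bounds on $N_k$ are all consumed, and where the $\alpha^{-1}$ and $\alpha^{-2}$ dependence arises from the second-moment (Chebyshev) control of the $\mathcal{O}(N_k^{-1/2})$-type statistical errors. The genuinely minimax-specific ingredient is the passage from $\mathcal{L}^k$ to $\Phi^k$ in Step~3, where one must verify that the gradient accuracy degrades to only $\mathcal{O}(\delta_k)$ --- driven entirely by $\widehat{B}^{k,1}-\nabla\psi$ --- while the argmax mismatch $\|y^{*}(x)-y^{k,*}(x)\|$ stays $\mathcal{O}(\delta_k^2)$; this is routine once the Lipschitzness and smoothness of $\mathcal{L}^k$ (Lemma~\ref{TR_Lemma1}) and the $1/\mu$ sensitivity of the argmax (Assumption~\ref{ass:stronglyconcave_local}) are in hand.
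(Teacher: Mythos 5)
Your proposal is correct and follows essentially the same route as the paper: a poisedness-based ``fully linear'' accuracy estimate for the least-squares fit of $\psi$ (the paper cites \cite[Proposition~1]{liu2022coupled} for exactly the bounds $\|\ell^k(x)-\psi(x)\|=\mathcal{O}(\delta_k^2)$ and $\|\widehat{B}^{k,1}-\nabla\psi(x)\|=\mathcal{O}(\delta_k)$ you posit), concentration of the empirical averages (the paper uses Berry--Esseen where you use Chebyshev, yielding the same $\alpha^{-1}$ and $\alpha^{-2}$ thresholds up to the stated $\mathcal{O}$'s), the $1/\mu$ sensitivity of the inner argmax, and the Danskin-type gradient formulas for $\nabla\Phi$ and $\nabla\Phi^k$. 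The only organizational difference is in part (i), where you bound $|\Phi-\Phi^k|$ by $\sup_{y\in\mathcal{Y}}|\mathcal{L}-\mathcal{L}^k|$ via nonexpansiveness of the max, whereas the paper routes through the empirical maximizer $y^{s,*}(x)$ and bounds the two argmax mismatches explicitly; both arguments deliver the same estimate.
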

\begin{proof}
	\textbf{Part (i):}
	For each $\left(x^{i}, \omega^{i}\right)$, denote $\epsilon^{i} \coloneqq \omega^{i}-\psi\left(x^{i}\right)$, $e^{k, i} \coloneqq \psi\left(x^{i}\right)-\left(\widehat{B}^{k, 1}\right)^{\top} x^{i}-\left(\widehat{B}^{k, 0}\right)^{\top}+\epsilon^{i}$.
	Then, for any $x \in \mathcal{B}\left(\widehat{x}^{k}, \delta_{k}\right),$
	\begin{equation*}
		\begin{aligned}
			\vert\Phi(x)-\Phi^{k}(x)\vert
			=&\vert\max_{y\in\mathcal{Y}}\mathbb{E}_{\tilde{\epsilon}}\left[l(x,y,\psi(x)+\tilde{\epsilon})\right] -\mathbb{E}_{\widehat{\epsilon}^{k}}[l(x,y^{k,*}(x),m^{k}(x,\widehat{\epsilon}^{k}))]
			\vert\\
			\leq & \vert\mathbb{E}_{\tilde{\epsilon}}\left[l(x,y^{*}(x),\psi(x)+\tilde{\epsilon})\right]
			-\frac{1}{N_k}\sum_{i=1}^{N_k}l(x,y^{*}(x),\psi(x)+\epsilon^{i})\vert\\
			&+\vert\frac{1}{N_k}\sum_{i=1}^{N_k}l(x,y^{*}(x),\psi(x)+\epsilon^{i}) -\frac{1}{N_k}\sum_{i=1}^{N_k}l(x,y^{s,*}(x),\psi(x)+\epsilon^{i})\vert\\
			&+\vert\frac{1}{N_k}\sum_{i=1}^{N_k}l(x,y^{s,*}(x),\psi(x)+\epsilon^{i}) -\frac{1}{N_k}\sum_{i=1}^{N_k}l(x,y^{s,*}(x),m^{k}(x,e^{k,i}))\vert\\
			&+\vert \frac{1}{N_k}\sum_{i=1}^{N_k}l(x,y^{s,*}(x),m^{k}(x,e^{k,i}))-\frac{1}{N_k}\sum_{i=1}^{N_k}l(x,y^{k,*}(x),m^{k}(x,e^{k,i}))\vert,
		\end{aligned}
	\end{equation*}
	where $y^{s,*}(x)=\arg\underset{y\in \mathcal Y}{\max}\frac{1}{N_k}\sum_{i=1}^{N_k}l(x,y,\psi(x)+\epsilon^{i})$
	and the inequality follows from triangle inequality.
	By $L_1$-Lipschitz continuity of $l(\cdot)$,
	\begin{equation}\label{TR_prop_Phi_k_0}
		\begin{aligned}
			\vert\Phi(x)-\Phi^{k}(x)\vert \leq & \vert\tau_{k}(x)\vert+\frac{1}{N_k}\sum_{i=1}^{N_k}L_{1}\left\|\psi(x)+\epsilon^{i}-m^{k}(x,e^{k,i})\right\|\\
			&+L_{1}\|y^{*}(x)-y^{s,*}(x)\|+L_{1}\|y^{s,*}(x)-y^{k,*}(x)\|\\
			\leq & \vert\tau_{k}(x)\vert+
			2L_{1}\max_{x\in\mathcal{B}(\widehat{x}^{k},\delta^{k})}\left\|\psi(x)-(\widehat{B}^{k,1})^{\top}x
			-(\widehat{B}^{k,0})^{\top}\right\|\\
			&+L_{1}\underbrace{\|y^{*}(x)-y^{s,*}(x)\|}_{I_1}+L_{1}\underbrace{\|y^{s,*}(x)-y^{k,*}(x)\|}_{I_2},
		\end{aligned}
	\end{equation}		
	where $\tau_{k}(x)\coloneqq\mathbb{E}_{\tilde{\epsilon}}\left[l(x,y^{*}(x),\psi(x)+\tilde{\epsilon})\right]
	-\frac{1}{N_k}\sum_{i=1}^{N_k}l(x,y^{*}(x),\psi(x)+\epsilon^{i})$.
	
	For $I_1$, by the definition of $y^{s,*}(x)$,
	\begin{equation}\label{TR_prop_Phi_k_1}
		\begin{aligned}
			(y^{*}(x)-y^{s,*}(x))^{\top}\frac{1}{N_k}\sum_{i=1}^{N_k}\nabla_{y}l(x, y^{s,*}(x), \psi(x) + \epsilon^{i})\leq 0,
		\end{aligned}
	\end{equation}
	where $y^{*}(x)$ is the optimal solution.
	Similarly, by the optimality of $y^{*}(x)$,
	\begin{equation}\label{TR_prop_Phi_k_2}
		\begin{aligned}
			(y^{s,*}(x)-y^{*}(x))^{\top}\mathbb{E}_{\tilde{\epsilon}}\left[\nabla_{y}l(x, y^{*}(x), \psi(x) + \tilde{\epsilon})\right]\leq 0.
		\end{aligned}
	\end{equation}
	Then,
	$$(y^{*}(x)-y^{s,*}(x))^{\top}\left (\frac{1}{N_k}\sum_{i=1}^{N_k}\nabla_{y}l(x, y^{s,*}(x), \psi(x) + \epsilon^{i})-\mathbb{E}_{\tilde{\epsilon}}\left[\nabla_{y}l(x, y^{*}(x),\psi(x) + \tilde{\epsilon})\right]\right )\leq 0.$$
	Since $l(x, \cdot, \psi(x) + \epsilon^{i})$ is $\mu$-strongly concave, we have
	\begin{small}
		\begin{equation*}
			\left(y^{*}(x)-y^{s,*}(x)\right)^{\top}\frac{1}{N_k}\sum_{i=1}^{N_k}\left[\nabla_{y}l(x, y^{*}(x), \psi(x) + \epsilon^{i})-\nabla_{y}l(x, y^{s,*}(x), \psi(x) + \epsilon^{i})\right]
			+\mu\left\|y^{*}(x)-y^{s,*}(x)\right\|^{2}\leq 0.
		\end{equation*}
	\end{small}
	Combining the above two inequalities, we have
	\begin{small}
		\begin{equation*}
			\begin{aligned}
				\left(y^{*}(x)-y^{s,*}(x)\right)^{\top}\left (\frac{1}{N_k}\sum_{i=1}^{N_k}\nabla_{y}l(x, y^{*}(x), \psi(x) + \epsilon^{i})-\mathbb{E}_{\tilde{\epsilon}}\left[\nabla_{y}l(x, y^{*}(x),\psi(x) + \tilde{\epsilon})\right]\right )
				+\mu\left\|y^{*}(x)-y^{s,*}(x)\right\|^{2}\leq 0,
			\end{aligned}
		\end{equation*}
	\end{small}
	which implies,
	\begin{equation}\label{TR_prop_Phi_k_3}
		\begin{aligned}
			\left\|y^{*}(x)-y^{s,*}(x)\right\|&\leq \frac{1}{\mu}
			\underbrace{\left\|\frac{1}{N_k}\sum_{i=1}^{N_k}\nabla_{y}l(x,y^{*}(x),\psi(x)+\epsilon^{i})-
			\mathbb{E}_{\tilde{\epsilon}}\left[\nabla_{y}l(x,y^{*}(x),\psi(x)+\tilde{\epsilon})\right] \right\|}_{\|\xi_{k}(x)\|},
		\end{aligned}
	\end{equation}
	where the inequality follows from  Cauchy-Schwarz inequality.
	
	By a similar analysis,
	\begin{equation}\label{TR_prop_Phi_k_4}
		\begin{aligned}
			I_2=\left\|y^{k,*}(x)-y^{s,*}(x)\right\|\leq \frac{2\ell_{1}}{\mu}\max_{x\in\mathcal{B}(\widehat{x}^{k},\delta_{k})}
			\left\|\psi(x)-(\widehat{B}^{k,1})^{\top}x-(\widehat{B}^{k,0})^{\top} \right\|.
		\end{aligned}
	\end{equation}
	
	Plugging \eqref{TR_prop_Phi_k_3} and \eqref{TR_prop_Phi_k_4} into inequality \eqref{TR_prop_Phi_k_0}, we have
	\begin{equation*}
		\begin{aligned}
			\vert\Phi(x)-\Phi^{k}(x)\vert
			&\leq |\tau_{k}(x)|+
			\frac{L_{1}}{\mu}\|\xi_{k}(x)\|+2L_{1}\left(1+\frac{\ell_{1}}{\mu}\right)\max_{x\in\mathcal{B}(\widehat{x}^{k},\delta_{k})}
			\left\|\psi(x)-(\widehat{B}^{k,1})^{\top}x-(\widehat{B}^{k,0})^{\top} \right\|.\\
		\end{aligned}
	\end{equation*}
	
	In what follows, we establish the high-probability accuracy of the trust-region model $\Phi^{k}(\cdot)$.
	Consider the stochastic process $\widehat{\Phi}^{k}(\cdot)$ given $\mathcal{F}_{k-1}$,
	\begin{equation}\label{TR_prop_Phi_k_5}
		\begin{aligned}
			&\mathbb{P}\left(\lvert\Phi(x)-\widehat{\Phi}^{k}(x)\rvert\geq \kappa_{ef}\Delta^{2}_{k}, \exists x\in \mathcal{B}(\widehat{x}^{k}, \Delta_{k})\mid \mathcal{F}_{k-1}\right)\\
			\leq &\mathbb{P}\left(\vert\tau_{k}(x)\vert\geq \frac{1}{3}\kappa_{ef}\Delta^{2}_{k}, \exists x\in \mathcal{B}(\widehat{x}^{k}, \Delta_{k})\mid \mathcal{F}_{k-1}\right)\\
			&+ \mathbb{P}\left(\frac{L_{1}}{\mu}\|\xi_k(x)\|\geq \frac{1}{3}\kappa_{ef}\Delta^{2}_{k}, \exists x\in \mathcal{B}(\widehat{x}^{k}, \Delta_{k})\mid \mathcal{F}_{k-1}\right)\\
			&+\mathbb{P}\left(2L_{1}\left(1+\frac{\ell_{1}}{\mu}\right)\left\|\psi(x)-(\widehat{B}^{k,1})^{\top}x
			-(\widehat{B}^{k,0})^{\top}\right\|\geq \frac{1}{3}\kappa_{ef}\Delta^{2}_{k},  \exists x\in \mathcal{B}(\widehat{x}^{k}, \Delta_{k})\mid \mathcal{F}_{k-1}\right).
		\end{aligned}
	\end{equation}
	For $\tau_k(x)$, since $\{\epsilon^{i}\}_{i=1}^{N_k}$ are i.i.d. samples of $\tilde{\epsilon}$, the Berry-Esseen theorem \cite{liu2022coupled} implies that with the rate $\mathcal{O}(1/\sqrt{N_k})$, we have
	$\sqrt{N_{k}} \tau_{k}(x) \stackrel{d}{\rightarrow} H\sim\mathcal{N}\left(0, V_{l}(x)\right)$,
	where $V_{l}(x)\coloneqq\Var_{\tilde{\epsilon}}\left(l(x,y^{*}(x),\psi(x)+\tilde{\epsilon})\right)$,
	and $\Psi(\cdot)$ is the cumulative probability distribution of $\vert H\vert$. Then, for any $\alpha\in (0,1)$, there exist constants $\kappa_{ef}, C>0$ such that
	\begin{equation*}
		\begin{aligned}
			&\mathbb{P}\left(\vert\tau_{k}(x)\vert\geq \frac{1}{3}\kappa_{ef}\Delta^{2}_{k}, \exists x\in \mathcal{B}(\widehat{x}^{k}, \Delta_{k})\mid \mathcal{F}_{k-1}\right)\\
			\leq& \mathbb{P}\left(\frac{1}{\sqrt{N_k}}\vert H \vert\geq \frac{1}{3}\kappa_{ef}\Delta^{2}_{k}, \exists x\in \mathcal{B}(\widehat{x}^{k}, \Delta_{k})\mid \mathcal{F}_{k-1}\right)+CN_k^{-1/2}\\
			\leq & \frac{\alpha}{6},
		\end{aligned}
	\end{equation*}
	if $N_k\geq\max\{9(\Psi^{-1}(1-\frac{\alpha}{12}))^{2}\kappa_{ef}^{-2}\delta_{k}^{-4} ,144C^{2}\alpha^{-2}\}$.
	Similarly, 
	\begin{equation*}
		\begin{aligned}
			\mathbb{P}\left(\frac{L_{1}}{\mu}\|\xi_k(x)\|\geq \frac{1}{3}\kappa_{ef}\Delta^{2}_{k}, \exists x\in \mathcal{B}(\widehat{x}^{k}, \Delta_{k})\mid \mathcal{F}_{k-1}\right)
			\leq \frac{\alpha}{6},
		\end{aligned}
	\end{equation*}
	if
	$N_k\geq\max\{\mathcal{O}(\kappa_{ef}^{-2}\delta_{k}^{-4}) ,\mathcal{O}(\alpha^{-2})\}$.
	By \cite[Proposition 1 (b)]{liu2022coupled}, for any $\alpha\in (0,1)$, there exists a constant $\kappa_{ef}>0$ such that
	$$\mathbb{P}\left(2L_{1}\left(1+\frac{\ell_{1}}{\mu}\right)\left\|\psi(x)-(\widehat{B}^{k,1})^{\top}x
	-(\widehat{B}^{k,0})^{\top}\right\|\geq \frac{1}{3}\kappa_{ef}\Delta^{2}_{k}, \exists x\in \mathcal{B}(\widehat{x}^{k}, \Delta_{k})\mid \mathcal{F}_{k-1}\right)\leq \frac{\alpha}{6},
	$$
	if $N_k \geq \mathcal{O}(\delta_k^{-4}\kappa_{ef}^{-2}\alpha^{-1})$.
	
	Summarizing the terms, we obtain
	$$\mathbb{P}\left(\lvert\Phi(x)-\Phi^{k}(x)\rvert\geq \kappa_{ef}\Delta^{2}_{k}, \exists x\in \mathcal{B}(\widehat{x}^{k}, \Delta_{k})\mid \mathcal{F}_{k-1}\right)\leq \frac{\alpha}{2},$$ if
	$N_k \geq \max\{\mathcal{O}(\delta_k^{-4}\kappa_{ef}^{-2}\alpha^{-1}),\mathcal{O}(\alpha^{-2})\}$.

	\textbf{Part (ii):} By \cite[Lemma 4.3]{lin2020gradient}, $\Phi(x)$ and $\Phi^{k}(x)$ are differential,
	\begin{equation*}
		\begin{aligned}
			&\nabla\Phi(x)=\nabla_{1}\mathbb{E}_{\tilde{\epsilon}}\left[l(x, y^{*}(x), \psi(x) + \tilde{\epsilon})\right]+(\nabla\psi(x))^{\top}\nabla_{3}\mathbb{E}_{\tilde{\epsilon}}\left[l(x, y^{*}(x), \psi(x) + \tilde{\epsilon})\right],\\
			&\nabla\Phi^{k}(x)=\nabla_{1}\mathbb{E}_{\widehat{\epsilon}^{k}}\left[l(x, y^{k,*}(x), m_{k}(x,\widehat{\epsilon}^{k}))\right]+(\widehat{B}^{k,1})\nabla_{3}\mathbb{E}_{\widehat{\epsilon}^{k}}\left[l(x, y^{k,*}(x), m_{k}(x,\widehat{\epsilon}^{k}))\right],
		\end{aligned}
	\end{equation*}
	where $y^{*}(x)=\arg\max_{y\in \mathcal Y} \mathcal{L}(x,y)$, $y^{k,*}(x)=\arg\max_{y\in \mathcal Y} \mathcal{L}^{k}(x,y)$.
	
	Then, by the triangle inequality, we have
	\begin{equation*}
		\begin{aligned}
			&\left\|\nabla\Phi(x)\right\|-\left\|\nabla \Phi^{k}(x)\right\|\\
			\leq&\left\|\nabla_{x}\mathbb{E}_{\tilde{\epsilon}}\left[l(x, y^{*}(x), \psi(x) + \tilde{\epsilon})\right]
			-\nabla_{x}\mathbb{E}_{\widehat{\epsilon}^{k}}\left[l(x, y^{k,*}(x), m_{k}(x,\widehat{\epsilon}^{k}))\right]\right\|\\
			\leq&\left\|\nabla_{x}\mathbb{E}_{\tilde{\epsilon}}\left[l(x, y^{*}(x), \psi(x) + \tilde{\epsilon})\right]
			-\frac{1}{N_k}\sum_{i=1}^{N_k}\nabla_{x}\left[l(x, y^{*}(x), \psi(x) + \epsilon^{i})\right]\right\|\\
			&+\left\|\frac{1}{N_k}\sum_{i=1}^{N_k}\nabla_{x}\left[l(x, y^{*}(x), \psi(x) + \epsilon^{i})\right]
			-\nabla_{x}\mathbb{E}_{\widehat{\epsilon}^{k}}\left[l(x, y^{k,*}(x), m_{k}(x,\widehat{\epsilon}^{k}))\right]\right\|\\
			\leq&\underbrace{\left\|\nabla_{1}\mathbb{E}_{\tilde{\epsilon}}\left[l(x, y^{*}(x), \psi(x) + \tilde{\epsilon})\right]
				-\frac{1}{N_k}\sum_{i=1}^{N_k}\nabla_{1}l(x, y^{*}(x), \psi(x) + \epsilon^{i})\right\|}_{\eta_{k1}(x)}\\
			&+\underbrace{\left\|(\nabla\psi(x))^{\top}\nabla_{3}\mathbb{E}_{\tilde{\epsilon}}\left[l(x, y^{*}(x), \psi(x) + \tilde{\epsilon})\right]-(\nabla\psi(x))^{\top}\frac{1}{N_k}\sum_{i=1}^{N_k}\nabla_{3}\left[l(x, y^{*}(x), \psi(x) + \epsilon^{i})\right]\right\|}_{\rho_{k1}(x)}\\
			&+\underbrace{\left\|\frac{1}{N_k}\sum_{i=1}^{N_k}\nabla_{1}l(x, y^{*}(x), \psi(x) + \epsilon^{i})
				-\frac{1}{N_k}\sum_{i=1}^{N_k}\left[\nabla_{1}l(x, y^{k,*}(x), m_{k}(x,e^{k,i}))\right]\right\|}_{\eta_{k2}(x)}\\
			&+\underbrace{\left\|(\nabla\psi(x))^{\top}\frac{1}{N_k}\sum_{i=1}^{N_k}\nabla_{3}\left[l(x, y^{*}(x), \psi(x) + \epsilon^{i})\right]
				-\widehat{B}^{k,1}\frac{1}{N_k}\sum_{i=1}^{N_k}\nabla_{3}\left[l(x, y^{k,*}(x), m_{k}(x,e^{k,i}))\right]\right\|}_{\rho_{k2}(x)}.
		\end{aligned}
	\end{equation*}
	Next, we bound the terms on the right-hand side of the above inequality. By the Berry-Esseen theorem, we can derive that $\sqrt{N_k}\eta_{k1}(x)$ and $\sqrt{N_k}\rho_{k1}(x)$ converge in distribution to a normal distribution with the rate $\mathcal{O}(1/\sqrt{N_k})$ uniformly for all $x \in \mathcal{B}(\widehat{x}^{k}, \delta_k)$.
	For any $x\in\mathcal{B}(\widehat{x}^{k},\delta_k)$,
	\begin{equation*}
		\begin{aligned}
			\eta_{k2}(x)
			\leq &\frac{1}{N_k}\sum_{i=1}^{N_k}\left\|\nabla_{1}l(x, y^{*}(x), \psi(x) + \epsilon^{i})-
			\nabla_{1}l(x, y^{*}(x), m_{k}(x,e^{k,i}))\right\|\\
			&+\frac{1}{N_k}\sum_{i=1}^{N_k}\left\|\nabla_{1}l(x, y^{*}(x), m_{k}(x,e^{k,i}))-\nabla_{1}l(x, y^{k,*}(x), m_{k}(x,e^{k,i}))\right\|\\
			\leq &\ell_{1}\frac{1}{N_k}\sum_{i=1}^{N_k}\left\|\psi(x) + \epsilon^{i}-m_{k}(x,e^{k,i}) \right\|
			+\ell_{1}\frac{1}{N_k}\sum_{i=1}^{N_k}\left\|y^{*}(x)-y^{k,*}(x) \right\|\\
			\leq & 2\ell_{1}\max_{x\in\mathcal{B}(\widehat{x}^{k},\delta_{k})}\left\|\psi(x)-\left(\widehat{B}^{k, 1}\right)^{\top} x-\left(\widehat{B}^{k, 0}\right)^{\top} \right\|+\ell_{1}\left\|y^{*}(x)-y^{k,*}(x) \right\|\\
			\leq & 2\ell_{1}(1+\frac{\ell_{1}}{\mu})\max_{x\in\mathcal{B}(\widehat{x}^{k},\delta_{k})}
			\left\|\psi(x)-(\widehat{B}^{k,1})^{\top}x-(\widehat{B}^{k,0})^{\top} \right\|+\frac{\ell_{1}}{\mu}\|\xi_{k}(x)\|,\\
		\end{aligned}
	\end{equation*}
	where the first inequality follows from triangle inequality and Jensen's inequality, the second inequality follows from Assumption \ref{Ass:smoothness_local}, and the last inequality follows from inequalities \eqref{TR_prop_Phi_k_3} and \eqref{TR_prop_Phi_k_4}.
	\begin{small}
		\begin{equation*}
			\begin{aligned}
				\rho_{k2}(x)
				\leq &\left\|
				\left(\widehat{B}^{k,1}\right)\frac{1}{N_k}\sum_{i=1}^{N_k}\nabla_{3}\left[l(x, y^{k,*}(x), m_{k}(x,e^{k,i}))\right]
				-(\nabla\psi(x))^{\top}\frac{1}{N_k}\sum_{i=1}^{N_k}\nabla_{3}\left[l(x, y^{k,*}(x), m_{k}(x,e^{k,i}))\right]\right\|\\
				&+\left\|(\nabla\psi(x))^{\top}\frac{1}{N_k}\sum_{i=1}^{N_k}\nabla_{3}\left[l(x, y^{k,*}(x), m_{k}(x,e^{k,i}))\right]
				-(\nabla\psi(x))^{\top}\frac{1}{N_k}\sum_{i=1}^{N_k}\nabla_{3}\left[l(x, y^{*}(x), m_{k}(x,e^{k,i})\right]\right\|\\
				&+\left\|(\nabla\psi(x))^{\top}\frac{1}{N_k}\sum_{i=1}^{N_k}\nabla_{3}\left[l(x, y^{*}(x), m_{k}(x,e^{k,i}))\right]
				-(\nabla\psi(x))^{\top}\frac{1}{N_k}\sum_{i=1}^{N_k}\nabla_{3}\left[l(x, y^{*}(x), \psi(x) + \epsilon^{i})\right]\right\|\\
			\end{aligned}
		\end{equation*}
		\begin{equation*}
			\begin{aligned}
				\leq &L_{1}\left\|\widehat{B}^{k,1}
				-\nabla\psi(x))\right\|+\ell_{1}\left\|\nabla\psi(x))\right\|\left\|y^{*}(x)-y^{k,*}(x)\right\|\\
				&+
				2\ell_{1}\left\|\nabla\psi(x))\right\|\max_{x\in\mathcal{B}(\widehat{x}^{k},\Delta_{k})}\left\|\psi(x)-\left(\widehat{B}^{k, 1}\right)^{\top} x-\left(\widehat{B}^{k, 0}\right)^{\top} \right\|\\
				\leq &L_{1}\left\|\widehat{B}^{k,1}
				-\nabla\psi(x))\right\|+L_{0}\ell_{1}\left\|y^{*}(x)-y^{k,*}(x)\right\|
				+
				2L_{0}\ell_{1}\max_{x\in\mathcal{B}(\widehat{x}^{k},\Delta_{k})}\left\|\psi(x)-\left(\widehat{B}^{k, 1}\right)^{\top} x-\left(\widehat{B}^{k, 0}\right)^{\top} \right\|\\
				\leq &L_{1}\left\|\widehat{B}^{k,1}
				-\nabla\psi(x))\right\|+\frac{L_{0}\ell_{1}}{\mu}\left|\xi_{k}(x)\right|+ 2L_{0}\ell_{1}(1+\frac{\ell_{1}}{\mu})\max_{x\in\mathcal{B}(\widehat{x}^{k},\Delta_{k})}\left\|\psi(x)-\left(\widehat{B}^{k, 1}\right)^{\top} x-\left(\widehat{B}^{k, 0}\right)^{\top} \right\|,
			\end{aligned}
		\end{equation*}
	\end{small}where the second inequality follows from $L_1$-Lipschitz continuity and $\ell_{1}$-smoothness of $l(\cdot)$, the third inequality follows from $L_0$-Lipschitz continuity of $\psi(\cdot)$ and the last inequality follows from \eqref{TR_prop_Phi_k_3} and \eqref{TR_prop_Phi_k_4}.
	
	 Combining the above analysis with \cite[Proposition 1]{liu2022coupled}, we have there exists a constant $\kappa_{ed}>0$ such that
	\begin{equation*}
		\begin{aligned}
			& \mathbb{P}\left(\left\|\nabla\Phi(x)\right\|-\|\nabla\widehat{\Phi}^{k}(x)\| \geq \kappa_{e d} \Delta_{k}, \exists x \in \mathcal{B}\left(\widehat{x}^{k}, \Delta_{k}\right)\mid \mathcal{F}_{k-1}\right) \\
			\leq & \mathbb{P}\left(\|\eta_{k1}(x)\|+\|\eta_{k2}(x)\|+\|\rho_{k1}(x)\|+\|\rho_{k2}(x)\| \geq \kappa_{e d} \Delta_{k}, \exists x \in \mathcal{B}\left(\widehat{x}^{k}, \Delta_{k}\right)\mid \mathcal{F}_{k-1}\right) \\
			\leq & \mathbb{P}\left(\|\eta_{k 1}(x)\| \geq \frac{1}{4} \kappa_{e d} \Delta_{k}, \exists x \in \mathcal{B}\left(\widehat{x}^{k}, \Delta_{k}\right)\mid \mathcal{F}_{k-1}\right)+\mathbb{P}\left(\|\eta_{k 2}(x)\| \geq \frac{1}{4} \kappa_{e d} \Delta_{k}, \exists x \in \mathcal{B}\left(\widehat{x}^{k}, \Delta_{k}\right)\mid \mathcal{F}_{k-1}\right) \\
			+&\mathbb{P}\left(\|\rho_{k 1}(x)\| \geq \frac{1}{4} \kappa_{e d} \Delta_{k}, \exists x \in \mathcal{B}\left(\widehat{x}^{k}, \Delta_{k}\right)\mid \mathcal{F}_{k-1}\right)+\mathbb{P}\left(\|\rho_{k 2}(x)\| \geq \frac{1}{4} \kappa_{e d} \Delta_{k}, \exists x \in \mathcal{B}\left(\widehat{x}^{k}, \Delta_{k}\right)\mid \mathcal{F}_{k-1}\right) \\
			\leq &\frac{\alpha}{2},
		\end{aligned}
	\end{equation*}
	 if $N_{k}\geq \max\{\mathcal{O}(\delta_{k}^{-4}\kappa_{ed}^{-2}),\mathcal{O}(\alpha^{-2}),
	\mathcal{O}(\delta_{k}^{-2}\kappa_{ed}^{-2}\alpha^{-1})\}$.
	The proof is complete.
\end{proof}
Proposition \ref{TR_proposition_1} shows that with enough number of samples, the trust region model $\Phi^{k}(\cdot)$ is a probabilistically accurate approximation of the primal function $\Phi(\cdot)$ in the trust region.
Combining Proposition \ref{TR_proposition_1} (a) and (b), we conclude that if $N_k \geq \max\{\mathcal{O}(\delta_k^{-4}(1-\alpha)^{-1}),\mathcal{O}((1-\alpha)^{-2})\}$, the event
\begin{equation}\label{TR_notion_I_k}
	I_{k} = \left\{\|\nabla\Phi(x)\|-\|\nabla\widehat{\Phi}^{k}(x)\| \leq \kappa_{ed}\Delta_{k},\; \left|\Phi(x) -\widehat{\Phi}^{k}(x) \right| \leq \kappa_{ef}\Delta_{k}^{2}, \forall x \in \mathcal{B}(\widehat{X}^{k}, \Delta_{k})\right\}
\end{equation}
holds with the probability at least $\alpha$. 
$I_k$ will be used as a metric to characterize the accuracy of the approximation model $\Phi^{k}(\cdot)$.
\begin{prop}\label{TR_proposition_2}
	Suppose Assumptions \ref{TR_ass_primal}--\ref{ass:stronglyconcave_local} holds. Let $S_k = \{\omega^i\}_{i=1}^{|S_k|}$ and $S_{k+1/2} = \{\omega^j\}_{j=1}^{|S_{k+1/2}|}$ be two sample sets with $\omega^i \sim \mathcal{D}(\widehat{x}^k)$ and $\omega^j \sim \mathcal{D}(\widehat{x}^k + s^k)$, and let $\beta\in (0,1)$ be a probability parameter. Denote 
	\begin{equation*}\small
		\begin{aligned}
			&V_k = \frac{1}{|S_k|} \sum_{i=1}^{|S_k|} l\left( \widehat{x}^k, y^{is,*}(\widehat{x}^k), \omega^i \right), \quad\\
			&V_{k+1/2} = \frac{1}{|S_{k+1/2}|} \sum_{j=1}^{|S_{k+1/2}|} 
			l\left( \widehat{x}^k + s^k, y^{is,*}(\widehat{x}^k + s^k), \omega^j \right),
		\end{aligned}
	\end{equation*}
	with the inexact solution $y^{is,*}(x)$ satisfies
	$\|y^{s,*}(x) - y^{is,*}(x)\| < \epsilon$ for $x \in \{\widehat{x}^k, \widehat{x}^k + s^k\}$ and $ |S_k|, |S_{k+1/2}| \geq M_k$.
	Then for any $\beta\in (0,1)$ there exists $\epsilon_F > 0$ such that
	$$\mathbb{P}\left(J_k\mid \mathcal{F}_{k-1/2}\right)\geq \beta$$
	if
	$\epsilon\leq\dfrac{1}{3L_1} \epsilon_F \Delta_k^2$ and
	$M_k = \max\left\{ \mathcal{O}\left( \Delta_k^{-4} \epsilon_F^{-2} \right),  \mathcal{O}((1-\beta)^{-2}) \right\},$ where 
	\begin{equation}\label{TR_notion_J_k}
		J_k = \{|V_k - \Phi(\widehat{X}^k)| \leq \epsilon_F \Delta_k^2, \quad |V_{k+1/2} - \Phi(\widehat{X}^k + S^k)| \leq \epsilon_F \Delta_k^2  \text{\ for some \ } \Delta_k>0 \}.
	\end{equation}
\end{prop}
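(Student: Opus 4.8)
The plan is to reuse the three-way decomposition strategy from the proof of Proposition~\ref{TR_proposition_1}, but now applied at the two \emph{fixed} points $\widehat{x}^k$ and $\widehat{x}^k+s^k$ rather than uniformly over a ball, and without any regression-approximation term, since $V_k$ and $V_{k+1/2}$ are formed from true samples $\omega^i\sim\mathcal{D}(\cdot)$ directly. Throughout I would condition on $\mathcal{F}_{k-1/2}$, which fixes $\widehat{x}^k$, $s^k$ and $\Delta_k$, so that the only remaining randomness is carried by the fresh i.i.d.\ sample sets $S_k\sim\mathcal{D}(\widehat{x}^k)$ and $S_{k+1/2}\sim\mathcal{D}(\widehat{x}^k+s^k)$, independent of $\mathcal{F}_{k-1/2}$.

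First I would bound $|V_k-\Phi(\widehat{x}^k)|$. Since $\Phi(\widehat{x}^k)=\mathbb{E}_{\omega\sim\mathcal{D}(\widehat{x}^k)}[l(\widehat{x}^k,y^*(\widehat{x}^k),\omega)]$, inserting $y^{s,*}(\widehat{x}^k)$ and then $y^*(\widehat{x}^k)$ into the empirical average defining $V_k$ and applying the triangle inequality splits the error into the inexact-solution term $L_1\|y^{is,*}(\widehat{x}^k)-y^{s,*}(\widehat{x}^k)\|\le L_1\epsilon$, the empirical-maximizer term $L_1\|y^{s,*}(\widehat{x}^k)-y^*(\widehat{x}^k)\|$, and the Monte-Carlo term $\bigl|\tfrac{1}{|S_k|}\sum_{i}l(\widehat{x}^k,y^*(\widehat{x}^k),\omega^i)-\Phi(\widehat{x}^k)\bigr|$, where the first inequality uses $L_1$-Lipschitz continuity of $l$ in $y$. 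The inexact-solution term is at most $\tfrac13\epsilon_F\Delta_k^2$ by the hypothesis $\epsilon\le\tfrac{1}{3L_1}\epsilon_F\Delta_k^2$. For the empirical-maximizer term, the strong-concavity argument already carried out to obtain \eqref{TR_prop_Phi_k_3}---now with the role of the $\psi(x)+\epsilon^i$ played by the $\omega^i$ themselves---gives $\|y^{s,*}(\widehat{x}^k)-y^*(\widehat{x}^k)\|\le\tfrac1\mu\|\xi_k(\widehat{x}^k)\|$, where $\xi_k(\widehat{x}^k)$ is the empirical-mean deviation of $\nabla_y l(\widehat{x}^k,y^*(\widehat{x}^k),\cdot)$ over $S_k$.

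The remaining step is the concentration of the Monte-Carlo term and of $\|\xi_k(\widehat{x}^k)\|$. Both are empirical-mean deviations of bounded i.i.d.\ quantities---boundedness coming from the compact sample space of $\tilde\epsilon$ in Assumption~\ref{TR_ass_psi} together with the Lipschitz bounds in Assumption~\ref{Ass:smoothness_local}---so the Berry--Esseen theorem gives, conditionally on $\mathcal{F}_{k-1/2}$, that $\sqrt{|S_k|}$ times each converges in distribution to a normal law at rate $\mathcal{O}(1/\sqrt{|S_k|})$. Choosing $|S_k|\ge\max\{\mathcal{O}(\Delta_k^{-4}\epsilon_F^{-2}),\mathcal{O}((1-\beta)^{-2})\}$ then forces each of the Monte-Carlo term and $\tfrac{L_1}{\mu}\|\xi_k(\widehat{x}^k)\|$ to be below $\tfrac13\epsilon_F\Delta_k^2$ except on an event of conditional probability at most $\tfrac{1-\beta}{4}$; a union bound gives $|V_k-\Phi(\widehat{x}^k)|\le\epsilon_F\Delta_k^2$ with conditional probability at least $1-\tfrac{1-\beta}{2}$. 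Applying the verbatim argument to $V_{k+1/2}$, $\Phi(\widehat{x}^k+s^k)$ and the sample set $S_{k+1/2}$ delivers the second inequality defining $J_k$ with the same conditional probability, and a final union bound over the two events yields $\mathbb{P}(J_k\mid\mathcal{F}_{k-1/2})\ge\beta$ with $M_k$ the larger of the two required sample sizes.

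The part I expect to require the most care is uniformity in $k$: for the conclusion to be meaningful along the whole trajectory, $\epsilon_F$ and the implied constants in $M_k$ must be independent of $k$, which in turn requires uniform-in-$k$ bounds on the variances $\Var_\omega(l(\widehat{x}^k,y^*(\widehat{x}^k),\omega))$ and $\Var_\omega(\nabla_y l(\widehat{x}^k,y^*(\widehat{x}^k),\omega))$ and on $\|\widehat{B}^{k,1}\|_F$; these follow from Assumptions~\ref{TR_ass_psi}--\ref{Ass:smoothness_local} and the standing hypothesis $\|\widehat{B}^{k,1}\|_F\le B$ used in Lemma~\ref{TR_Lemma1}, but the bookkeeping must be done explicitly. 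A secondary subtlety is checking that, conditional on $\mathcal{F}_{k-1/2}$, all of $y^{s,*}(\widehat{x}^k)$, $y^{is,*}(\widehat{x}^k)$ and $\xi_k(\widehat{x}^k)$ are functions of the single fresh sample set $S_k$, so that the Berry--Esseen estimates are legitimately applied to i.i.d.\ summands and the events for $V_k$ and $V_{k+1/2}$ live on a common conditional probability space.
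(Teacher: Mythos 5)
Your proposal is correct and follows essentially the same route as the paper's proof: the same three-term triangle-inequality decomposition (inexact-solution error bounded by $L_1\epsilon\le\tfrac13\epsilon_F\Delta_k^2$, empirical-maximizer error controlled via the strong-concavity bound $\|y^{s,*}-y^{*}\|\le\tfrac1\mu\|\xi_k\|$, Monte-Carlo error via Berry--Esseen), the same sample-size requirements, and the same union bound over the events at $\widehat{x}^k$ and $\widehat{x}^k+s^k$. Your only deviations are cosmetic and slightly tidier -- placing the Monte-Carlo term at the deterministic $y^{*}(\widehat{x}^k)$ rather than at the sample-dependent $y^{s,*}(\widehat{x}^k)$, and allocating $\tfrac{1-\beta}{4}$ per event so the final two-point union bound closes explicitly.
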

\begin{proof}
	By the definition of $\Phi(\widehat{x}^k)$ defined in \eqref{TR_notations}  and $v_k$ defined in \eqref{TR_v_k},
	\begin{equation*}
		\begin{aligned}
			\vartheta(\widehat{x}^{k})\coloneqq &v_k-\Phi(\widehat{x}^k)\\
			=&\underbrace{\frac{1}{\vert S_k\vert}\sum_{\omega_i\in S_k}l(\widehat{x}^{k}, y^{is,*}(\widehat{x}^{k}), \omega^{i})-\frac{1}{\vert S_k\vert}\sum_{\omega_i\in S_k}l(\widehat{x}^{k}, y^{s,*}(\widehat{x}^{k}), \omega^{i})}_{\vartheta_{1}(\widehat{x}^{k})}\\
			&+\underbrace{\frac{1}{\vert S_k\vert}\sum_{\omega_i\in S_k}l(\widehat{x}^{k}, y^{s,*}(\widehat{x}^{k}), \omega^{i})-\mathbb{E}_{\tilde{\epsilon}}\left[l(\widehat{x}^{k},y^{s,*}(\widehat{x}^{k}),\psi(\widehat{x}^{k})+\tilde{\epsilon})\right]}_{\vartheta_{2}(\widehat{x}^{k})}\\
			&+\underbrace{\mathbb{E}_{\tilde{\epsilon}}\left[l(\widehat{x}^{k},y^{s,*}(\widehat{x}^{k}),\psi(\widehat{x}^{k})+\tilde{\epsilon})\right]-\mathbb{E}_{\tilde{\epsilon}}\left[l(\widehat{x}^{k},y^{*}(\widehat{x}^{k}),\psi(\widehat{x}^{k})+\tilde{\epsilon})\right]}_{\vartheta_{3}(\widehat{x}^{k})}.
		\end{aligned}
	\end{equation*}
	
	For $\vartheta_{1}(\widehat{x}^{k})$, by $L_1$-Lipschitz continuity of $l(\cdot)$ and the accuracy of the inexact solution $y^{is,*}(\widehat{x}^{k})$, 
	$$\vert\vartheta_{1}(\widehat{x}^{k})\vert\leq L_{1}\left\|y^{s,*}(\widehat{x}^{k})-y^{is,*}(\widehat{x}^{k})\right\|< \frac{1}{3}\epsilon_{F} \delta^{2}_{k},$$
	where $y^{s,*}(\widehat{x}^{k})$ is the corresponding optimal solution.

	For $\vartheta_{2}(\widehat{x}^{k})$, the Berry-Esseen theorem implies that with the rate $\mathcal{O}(1/\sqrt{\vert S_k \vert})$, 
	$\sqrt{\vert S_k\vert} \vartheta_{2}(\widehat{x}^{k}) \stackrel{d}{\rightarrow} H\sim\mathcal{N}\left(0, V_{l}(\widehat{x}^{k})\right)$,
	where $V_{l}(\widehat{x}^{k})=\Var_{\tilde{\epsilon}}\left(l(\widehat{x}^{k},y^{s,*}(\widehat{x}^{k}),\psi(\widehat{x}^{k})+\tilde{\epsilon})\right)$, and $\Psi(\cdot)$ is the cumulative probability distribution of $|H|$. For any $\beta \in (0,1)$, there exist constants $\epsilon_{F}, C>0$ such that
	$$\mathbb{P}\left(\vert\vartheta_{2}(\widehat{x}^k)\vert\geq \frac{1}{3}\epsilon_{F}\Delta^{2}_{k}\mid \mathcal{F}_{k-1/2}\right)\leq  \frac{1-\beta}{3},$$
	 if $\vert S_k\vert\geq\max\{9(\Psi^{-1}(1-\frac{1-\beta}{6}))^{2}\epsilon_{F}^{-2}\Delta_{k}^{-4} ,36C^{2}(1-\beta)^{-2}\}$.
	
	For $\vartheta_{3}(\widehat{x}^k)$, by Assumption \ref{Ass:smoothness_local},
	$$\vert\vartheta_{3}(\widehat{x}^k)\vert\leq L_{1}\left\|y^{s,*}(\widehat{x}^k)-y^{*}(\widehat{x}^k)\right\|.$$
	By the definition of $y^{s,*}(\widehat{x}^k)$ and $y^{*}(\widehat{x}^k)$ along with Assumption \ref{ass:stronglyconcave_local},
	\begin{equation*}
		\begin{aligned}
			\left\|y^{*}(\widehat{x}^k)-y^{s,*}(\widehat{x}^k)\right\|&\leq \frac{1}{\mu}
			\left\|\frac{1}{|S_k|}\sum_{i\in S_k}\nabla_{y}l(\widehat{x}^k,y^{*}(\widehat{x}^k),\psi(\widehat{x}^k)+\epsilon^{i})-
			\mathbb{E}_{\tilde{\epsilon}}\left[\nabla_{y}l(\widehat{x}^k,y^{*}(\widehat{x}^k),\psi(\widehat{x}^k)+\tilde{\epsilon})\right] \right\|.
		\end{aligned}
	\end{equation*}
	Then, by the Berry-Esseen theorem,
	\begin{equation*}
		\begin{aligned}
			\mathbb{P}\left(\vert\vartheta_{3}(x)\vert\geq \frac{1}{3}\epsilon_{F}\Delta^{2}_{k}\mid \mathcal{F}_{k-1/2}\right)
			\leq \frac{1-\beta}{3},
		\end{aligned}
	\end{equation*}
	if $\vert S_k\vert\geq\max\{(\mathcal{O}(\epsilon_{F}^{-2}\Delta_{k}^{-4}) ,\mathcal{O}((1-\beta)^{-2})\}$.
	
	Putting these pieces together, we have
	$$\mathbb{P}\left(\vert\vartheta(\widehat{x}^{k})\vert\leq \epsilon_{F}\Delta^{2}_{k}\mid \mathcal{F}_{k-1/2}\right)\geq \beta.$$	
	By a similar analysis on $V_{k+1/2}$, the result follows.
	The proof is complete.		
\end{proof}

Proposition \ref{TR_proposition_2} shows that with enough number of samples, $V_k,V_{k+1/2}$ are probabilistically accurate value estimates of $\Phi(\cdot)$ evaluated at $\widehat{x}^k$ and $\widehat{x}^k+s^{k}$, respectively. 
Similar to $I_k$ defined in \eqref{TR_notion_I_k}, $J_k$ will be used as a metric to characterize the accuracy of the value estimates.
\begin{lem}\label{TR_Lemma3}
	Suppose that
	\begin{itemize}
		\item[\rm(a)] The number of samples for local linear regression $$|T_k|\geq \max\{\mathcal{O}(\delta_k^{-4}(1-\alpha)^{-1}),\mathcal{O}((1-\alpha)^{-2})\}.$$ 
		\item[\rm(b)] The number of samples for value estimates $\max\{|S_{k}|, |S_{k+1/2}|\}\geq \max\left\{ \mathcal{O}\left( \delta_k^{-4} \epsilon_F^{-2} \right),  \mathcal{O}((1-\beta)^{-2}) \right\}$.
		\item[\rm(c)] The inexact solution $y^{is,*}(x)$ satisfies
		$\|y^{s,*}(x) - y^{is,*}(x)\| < \dfrac{1}{3L_1} \epsilon_F \delta_k^2$ for any $x\in\mathcal{B}(\widehat{x}^{k},\delta_k)$, and $\epsilon_F\leq \kappa_{ef}$.
		\item[\rm(d)] The trust-region radius satisfies
		\[
		\delta_k \leq \min \left\{\frac{1}{\eta_2}, \; \frac{\kappa_{dcp}(1 - \eta_1)}{6\kappa_{ef}\max\{\delta_{\max},1\}} \right\} \|\nabla_{1} \mathcal{L}^{k}(\widehat{x}^{k},y^{i,*}(\widehat{x}^{k}))\|,
		\]
		where $\eta_{1}$ and $\eta_{2}$ are parameters for acceptance in Algorithm \ref{TR:algorithmCL}, and the inexact maximizer $y^{i,*}(x)$ satisfies
		\[\|y^{i,*}(x)-y^{k,*}(x)\| \leq \frac{\kappa_{ef}\delta_k^{2}}{L_1}, \forall x\in \mathcal{B}(\widehat{x}^{k},\delta_k).\]
	\end{itemize}
	\noindent Then, the $k$-th iteration is successful with probability at least $\alpha\beta$.
\end{lem}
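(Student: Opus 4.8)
The plan is to show that, on a single high-probability event, both acceptance tests in Step~8 of Algorithm~\ref{TR:algorithmCL} pass. The second test, $\|\nabla_{1}\mathcal{L}^{k}(\widehat{x}^{k},y^{i,*}(\widehat{x}^{k}))\|\geq\eta_{2}\delta_{k}$, holds deterministically, since it is exactly the first clause $\delta_{k}\leq\eta_{2}^{-1}\|\nabla_{1}\mathcal{L}^{k}(\widehat{x}^{k},y^{i,*}(\widehat{x}^{k}))\|$ of condition~(d). Thus the whole task reduces to proving $\rho_{k}\geq\eta_{1}$ on an event of probability at least $\alpha\beta$.

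First I would assemble the probabilistic event. Conditions~(a) and (b)--(c) are precisely the sample-size and inexactness hypotheses of Propositions~\ref{TR_proposition_1} and \ref{TR_proposition_2}, so $\mathbb{P}(I_{k}\mid\mathcal{F}_{k-1})\geq\alpha$ and $\mathbb{P}(J_{k}\mid\mathcal{F}_{k-1/2})\geq\beta$, with $I_{k}$ and $J_{k}$ the events in \eqref{TR_notion_I_k} and \eqref{TR_notion_J_k}. Since $\widehat{X}^{k}$, $\Delta_{k}$ and $\widehat{\Phi}^{k}(\cdot)$ are all $\mathcal{F}_{k-1/2}$-measurable, so is $I_{k}$; hence by the tower rule $\mathbb{P}(I_{k}\cap J_{k})=\mathbb{E}\!\left[\mathbf{1}_{I_{k}}\,\mathbb{P}(J_{k}\mid\mathcal{F}_{k-1/2})\right]\geq\beta\,\mathbb{P}(I_{k})\geq\alpha\beta$. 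It then suffices to show that on $I_{k}\cap J_{k}$, together with conditions~(c)--(d), the iteration is successful.

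On $I_{k}\cap J_{k}$ I would estimate the numerator and denominator of \eqref{equ:ratio}. Write $\mathrm{pred}_{k}\coloneqq\mathcal{L}^{k}(\widehat{x}^{k},y^{i,*}(\widehat{x}^{k}))-\mathcal{L}^{k}(\widehat{x}^{k}+s^{k},y^{i,*}(\widehat{x}^{k}+s^{k}))$ and $\mathrm{ared}_{k}\coloneqq v_{k}-v_{k+1/2}$. For each point $x\in\{\widehat{x}^{k},\ \widehat{x}^{k}+s^{k}\}$ I would insert $\Phi^{k}(x)=\mathcal{L}^{k}(x,y^{k,*}(x))$ and $\Phi(x)$ and apply the triangle inequality: the gap $|\mathcal{L}^{k}(x,y^{i,*}(x))-\Phi^{k}(x)|$ is at most $L_{1}\|y^{i,*}(x)-y^{k,*}(x)\|\leq\kappa_{ef}\delta_{k}^{2}$, using that $\mathcal{L}^{k}(x,\cdot)$ is $L_{1}$-Lipschitz (as $\|\nabla_{y}\mathcal{L}^{k}\|\leq L_{1}$ under Assumption~\ref{Ass:smoothness_local}) and the inexactness bound in~(d); $|\Phi^{k}(x)-\Phi(x)|\leq\kappa_{ef}\delta_{k}^{2}$ on $I_{k}$; and $|\Phi(x)-v_{\bullet}|\leq\epsilon_{F}\delta_{k}^{2}\leq\kappa_{ef}\delta_{k}^{2}$ on $J_{k}$ using $\epsilon_{F}\leq\kappa_{ef}$. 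Summing over the two points yields $|\mathrm{pred}_{k}-\mathrm{ared}_{k}|\leq 6\kappa_{ef}\delta_{k}^{2}$. For the denominator, the trial step $s^{k}$ satisfies \eqref{TR_algo_inex_suffdescent}, i.e. $\mathrm{pred}_{k}\geq\kappa_{dcp}\|\nabla_{1}\mathcal{L}^{k}(\widehat{x}^{k},y^{i,*}(\widehat{x}^{k}))\|\min\{\delta_{k},1\}$; plugging in the second clause of~(d), $\|\nabla_{1}\mathcal{L}^{k}(\widehat{x}^{k},y^{i,*}(\widehat{x}^{k}))\|\geq\frac{6\kappa_{ef}\max\{\delta_{\max},1\}}{\kappa_{dcp}(1-\eta_{1})}\delta_{k}$, and using $\max\{\delta_{\max},1\}\min\{\delta_{k},1\}\geq\delta_{k}$ (which holds since $\delta_{k}\leq\delta_{\max}$ for every $k$), I obtain $\mathrm{pred}_{k}\geq\frac{6\kappa_{ef}}{1-\eta_{1}}\delta_{k}^{2}>0$.

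Combining the two estimates, $1-\rho_{k}=(\mathrm{pred}_{k}-\mathrm{ared}_{k})/\mathrm{pred}_{k}\leq 6\kappa_{ef}\delta_{k}^{2}\big/\bigl(\tfrac{6\kappa_{ef}}{1-\eta_{1}}\delta_{k}^{2}\bigr)=1-\eta_{1}$, so $\rho_{k}\geq\eta_{1}$; together with the deterministic bound $\|\nabla_{1}\mathcal{L}^{k}(\widehat{x}^{k},y^{i,*}(\widehat{x}^{k}))\|\geq\eta_{2}\delta_{k}$, Step~8 accepts on $I_{k}\cap J_{k}$, an event of probability at least $\alpha\beta$. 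I expect the main obstacle to be the bookkeeping: isolating exactly the three error sources that keep $|\mathrm{pred}_{k}-\mathrm{ared}_{k}|$ of order $\delta_{k}^{2}$, tracking which Lipschitz constant and accuracy level governs each so that the aggregate constant is $6$ and matches the threshold chosen in condition~(d), and the measurability step that converts the two separate conditional guarantees into the single bound $\mathbb{P}(I_{k}\cap J_{k})\geq\alpha\beta$.
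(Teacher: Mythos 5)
Your proposal is correct and follows essentially the same route as the paper's proof: the same five-term decomposition of $\rho_k-1$ into the value-estimate errors (bounded by $\epsilon_F\delta_k^2\le\kappa_{ef}\delta_k^2$ on $J_k$), the model errors (bounded by $2\kappa_{ef}\delta_k^2$ each on $I_k$ combined with the inexact-maximizer bound), and the same lower bound on the predicted reduction via \eqref{TR_algo_inex_suffdescent} and condition (d), yielding $|\rho_k-1|\le 1-\eta_1$. Your explicit tower-rule argument for $\mathbb{P}(I_k\cap J_k)\ge\alpha\beta$ and your observation that the gradient test in Step 8 holds deterministically from the first clause of (d) are small refinements of details the paper leaves implicit, not a different approach.
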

\begin{proof}
	Note that the $k$-th iteration is successful, if the actual-to-predicted reduction criterion $\rho_{k} \geq \eta_{1}$ in Step 8 of Algorithm \ref{TR:algorithmCL} holds. In the following, we focus on $\rho_k$.
	By the definition of $\rho_k$,
	\begin{small}
		\begin{equation}\label{TR_lemma_rhok}
			\begin{aligned}
				\rho_k &= \frac{v_k - v_{k+1/2}}{ \mathcal{L}^{k}(\widehat{x}^{k},y^{i,*}(\widehat{x}^{k})) -  \mathcal{L}^{k}(\widehat{x}^{k}+s^k,y^{i,*}(\widehat{x}^{k}+s^k))} \\
				&= \frac{v_k - \Phi(\widehat{x}^k)}{\mathcal{L}^{k}(\widehat{x}^{k},y^{i,*}(\widehat{x}^{k})) -  \mathcal{L}^{k}(\widehat{x}^{k}+s^k,y^{i,*}(\widehat{x}^{k}+s^k))} + \frac{\Phi(\widehat{x}^k) -  \mathcal{L}^{k}(\widehat{x}^{k},y^{i,*}(\widehat{x}^{k}))}{ \mathcal{L}^{k}(\widehat{x}^{k},y^{i,*}(\widehat{x}^{k})) -  \mathcal{L}^{k}(\widehat{x}^{k}+s^k,y^{i,*}(\widehat{x}^{k}+s^k))} \\
				&+ \frac{\mathcal{L}^{k}(\widehat{x}^{k},y^{i,*}(\widehat{x}^{k})) - \mathcal{L}^{k}(\widehat{x}^{k}+s^k,y^{i,*}(\widehat{x}^{k}+s^k))}{ \mathcal{L}^{k}(\widehat{x}^{k},y^{i,*}(\widehat{x}^{k})) -  \mathcal{L}^{k}(\widehat{x}^{k}+s^k,y^{i,*}(\widehat{x}^{k}+s^k))}+\frac{\mathcal{L}^{k}(\widehat{x}^{k}+s^k,y^{i,*}(\widehat{x}^{k}+s^k))-\Phi(\widehat{x}^k + s^k)}{ \mathcal{L}^{k}(\widehat{x}^{k},y^{i,*}(\widehat{x}^{k})) -  \mathcal{L}^{k}(\widehat{x}^{k}+s^k,y^{i,*}(\widehat{x}^{k}+s^k))}\\
				&+ \frac{\Phi(\widehat{x}^k + s^k) - v_{k+1/2}}{ \mathcal{L}^{k}(\widehat{x}^{k},y^{i,*}(\widehat{x}^{k})) -  \mathcal{L}^{k}(\widehat{x}^{k}+s^k,y^{i,*}(\widehat{x}^{k}+s^k))}.
			\end{aligned}
		\end{equation}
	\end{small}
	
	By $L_1$-Lipschitz continuity of $l(\cdot)$ and the setting of the inexact solution $y^{i,*}(x)$,
	$$|\Phi^{k}(x)-\mathcal{L}^{k}(x,y^{i,*}(x))|\leq  \kappa_{ef}\delta_k^{2}, \forall x \in \mathcal{B}(\widehat{x}^k, \delta_k).$$
	If $I_k$ defined in \eqref{TR_notion_I_k} holds,
	\begin{equation}
		\label{lemma2-1}
		\begin{aligned}
			|\Phi(\widehat{x}^k) - \mathcal{L}^{k}(\widehat{x}^{k},y^{i,*}(\widehat{x}^{k}))| \leq 2\kappa_{ef} \delta^2_k, \;\;|\Phi(\widehat{x}^k + s^k) -  \mathcal{L}^{k}(\widehat{x}^{k}+s^k,y^{i,*}(\widehat{x}^{k}+s^k))| \leq 2\kappa_{ef} \delta^2_k.
		\end{aligned}
	\end{equation}
	
	If $J_k$ defined in \eqref{TR_notion_J_k} holds,  we have by \( \epsilon_F \leq \kappa_{ef} \) that
	\begin{equation}
		\label{lemma2-2}
		\begin{aligned}
			|v_k - \Phi(\widehat{x}^k)| \leq \kappa_{ef} \delta^2_k, \;\;|v_{k+1/2} - \Phi(\widehat{x}^k + s^k)| \leq \kappa_{ef} \delta^2_k.
		\end{aligned}
	\end{equation}
	
	Plugging inequalities (\ref{lemma2-1}) and (\ref{lemma2-2}) into \eqref{TR_lemma_rhok}, we have
	\[
	|\rho_k - 1| \leq \frac{6 \kappa_{ef} \max\{\delta_{\max},1\} \delta_k}{\kappa_{dcp} \|\nabla_{1} \mathcal{L}^{k}(\widehat{x}^{k},y^{i,*}(\widehat{x}^{k}))\|} \leq 1 - \eta_1,
	\]
	where the last inequality follows from the fact that \( \delta_k \leq \frac{\kappa_{dcp}(1 - \eta_1)}{6 \kappa_{ef}\max\{\delta_{\max},1\}} \|\nabla_{1} \mathcal{L}^{k}(\widehat{x}^{k},y^{i,*}(\widehat{x}^{k}))\| \). 
	Propositions \ref{TR_proposition_1} implies that $I_k$ holds with probabilities at least $\alpha$, and  Proposition \ref{TR_proposition_2} implies that $J_k$ holds with probabilities at least $\beta$. Then the $k$-th iteration is successful with probability at least $\alpha\beta$.
	The proof is complete.
\end{proof}

\begin{lem}\label{TR_Lemma4}
Suppose that
\begin{itemize}
	\item[\rm(a)] The number of samples for local linear regression $$|T_k|\geq \max\{\mathcal{O}(\delta_k^{-4}(1-\alpha)^{-1}),\mathcal{O}((1-\alpha)^{-2})\}.$$
	\item[\rm(b)] The trust-region radius satisfies
	\begin{equation}
		\label{TR_Lemma1_equ_1}
		\delta_k \leq \frac{1}{\left( \tfrac{8\kappa_{ef}}{\kappa_{dcp}} + 2\kappa_{ed} \right) \max \{ 1, \delta_{\max} \}} \|\nabla\Phi(\widehat{x}^k)\|.
	\end{equation}
	\item[\rm(c)]The inexact solution $y^{i,*}(x)$ satisfies
	\[
	\|y^{i,*}(x)-y^{k,*}(x)\| \leq \min\left\{\frac{\kappa_{ed}\delta_k}{\widehat{\ell}_1},\; \frac{\kappa_{ef}\delta_k^{2}}{L_1}\right\}, \forall x\in \mathcal{B}(\widehat{x}^{k},\delta_k).
	\]
\end{itemize}
\noindent Then,
\[
\Phi(\widehat{x}^k + s^k) - \Phi(\widehat{x}^k) \leq -C_1 \|\nabla\Phi(\widehat{x}^k)\|\,\delta_k
\]
holds with probability at least $\alpha$, where $C_1 \coloneqq \frac{4\kappa_{dcp}\,\kappa_{ef}}{\bigl(8\kappa_{ef} + 2\kappa_{ed}\kappa_{dcp}\bigr)\max \{\delta_{\max}, 1\}}$.
\end{lem}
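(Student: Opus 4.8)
The plan is to run the whole argument on the model-accuracy event $I_{k}$ of \eqref{TR_notion_I_k}, which by Proposition~\ref{TR_proposition_1} together with assumption~(a) holds with probability at least $\alpha$ conditionally on $\mathcal{F}_{k-1}$; note that, unlike Lemma~\ref{TR_Lemma3}, the value-estimate event $J_{k}$ is not needed here because $\Phi(\widehat{x}^{k}+s^{k})-\Phi(\widehat{x}^{k})$ involves only the true primal function. Since $\|s^{k}\|\le\delta_{k}$, both $\widehat{x}^{k}$ and $\widehat{x}^{k}+s^{k}$ lie in $\mathcal{B}(\widehat{x}^{k},\delta_{k})$, so on $I_{k}$ we have $|\Phi(x)-\Phi^{k}(x)|\le\kappa_{ef}\delta_{k}^{2}$ and $\|\nabla\Phi(x)\|-\|\nabla\Phi^{k}(x)\|\le\kappa_{ed}\delta_{k}$ at $x\in\{\widehat{x}^{k},\widehat{x}^{k}+s^{k}\}$. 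First I would insert $\Phi^{k}$ and $\mathcal{L}^{k}(\cdot,y^{i,*}(\cdot))$ into $\Phi(\widehat{x}^{k}+s^{k})-\Phi(\widehat{x}^{k})$ and regroup, writing it as the decrease $\mathcal{L}^{k}(\widehat{x}^{k}+s^{k},y^{i,*}(\widehat{x}^{k}+s^{k}))-\mathcal{L}^{k}(\widehat{x}^{k},y^{i,*}(\widehat{x}^{k}))$ plus four ``model-error'' terms. Two of these errors are $\le\kappa_{ef}\delta_{k}^{2}$ directly from $I_{k}$; the other two equal $|\mathcal{L}^{k}(x,y^{k,*}(x))-\mathcal{L}^{k}(x,y^{i,*}(x))|\le L_{1}\|y^{k,*}(x)-y^{i,*}(x)\|\le\kappa_{ef}\delta_{k}^{2}$ by the $L_{1}$-Lipschitz continuity of $l(\cdot)$ in $y$ and assumption~(c); and the decrease term is $\le-\kappa_{dcp}\|\nabla_{1}\mathcal{L}^{k}(\widehat{x}^{k},y^{i,*}(\widehat{x}^{k}))\|\min\{\delta_{k},1\}$ by the sufficient-descent property \eqref{TR_algo_inex_suffdescent} of $s^{k}$. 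This gives, on $I_{k}$,
\[
\Phi(\widehat{x}^{k}+s^{k})-\Phi(\widehat{x}^{k})\le 4\kappa_{ef}\delta_{k}^{2}-\kappa_{dcp}\,\|\nabla_{1}\mathcal{L}^{k}(\widehat{x}^{k},y^{i,*}(\widehat{x}^{k}))\|\,\min\{\delta_{k},1\}.
\]

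Next I would turn $\|\nabla_{1}\mathcal{L}^{k}(\widehat{x}^{k},y^{i,*}(\widehat{x}^{k}))\|$ into a bound in terms of $\|\nabla\Phi(\widehat{x}^{k})\|$. Using the envelope identity $\nabla\Phi^{k}(\widehat{x}^{k})=\nabla_{1}\mathcal{L}^{k}(\widehat{x}^{k},y^{k,*}(\widehat{x}^{k}))$ (by the Danskin theorem, cf.\ \cite[Lemma 4.3]{lin2020gradient} and the gradient formula in the proof of Lemma~\ref{TR_Lemma1}), the $\widehat{\ell}_{1}$-smoothness of $\mathcal{L}^{k}(\cdot)$, the bound $\|y^{i,*}(\widehat{x}^{k})-y^{k,*}(\widehat{x}^{k})\|\le\kappa_{ed}\delta_{k}/\widehat{\ell}_{1}$ of assumption~(c), and the gradient accuracy on $I_{k}$, one obtains
\[
\|\nabla_{1}\mathcal{L}^{k}(\widehat{x}^{k},y^{i,*}(\widehat{x}^{k}))\|\ \ge\ \|\nabla\Phi^{k}(\widehat{x}^{k})\|-\kappa_{ed}\delta_{k}\ \ge\ \|\nabla\Phi(\widehat{x}^{k})\|-2\kappa_{ed}\delta_{k},
\]
and \eqref{TR_Lemma1_equ_1} guarantees the right-hand side is nonnegative. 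Moreover $\delta_{k}\le\delta_{\max}$ for every $k$ (immediate from the radius updates in Algorithm~\ref{TR:algorithmCL}), hence $\min\{\delta_{k},1\}\ge\delta_{k}/\max\{1,\delta_{\max}\}$.

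For the final bookkeeping, write $g\coloneqq\|\nabla\Phi(\widehat{x}^{k})\|$ and $M\coloneqq\max\{1,\delta_{\max}\}$. Condition \eqref{TR_Lemma1_equ_1} reads $\bigl(\tfrac{8\kappa_{ef}}{\kappa_{dcp}}+2\kappa_{ed}\bigr)M\delta_{k}\le g$, from which both $g-2\kappa_{ed}\delta_{k}\ge\tfrac{8\kappa_{ef}}{\kappa_{dcp}}M\delta_{k}$ and $g-2\kappa_{ed}\delta_{k}\ge\tfrac{8\kappa_{ef}}{8\kappa_{ef}+2\kappa_{ed}\kappa_{dcp}}\,g$ follow (using $M\ge1$). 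Combining the first display with $\min\{\delta_{k},1\}\ge\delta_{k}/M$, splitting $g-2\kappa_{ed}\delta_{k}$ in half, and bounding one half below by $\tfrac{4\kappa_{ef}}{\kappa_{dcp}}M\delta_{k}$ via the first inequality, the descent term satisfies $-\kappa_{dcp}(g-2\kappa_{ed}\delta_{k})\delta_{k}/M\le-\tfrac{\kappa_{dcp}}{2M}(g-2\kappa_{ed}\delta_{k})\delta_{k}-4\kappa_{ef}\delta_{k}^{2}$, which cancels exactly the $4\kappa_{ef}\delta_{k}^{2}$ error; inserting the second inequality then yields $\Phi(\widehat{x}^{k}+s^{k})-\Phi(\widehat{x}^{k})\le-\tfrac{\kappa_{dcp}}{2M}\cdot\tfrac{8\kappa_{ef}}{8\kappa_{ef}+2\kappa_{ed}\kappa_{dcp}}\,g\,\delta_{k}=-C_{1}\,g\,\delta_{k}$ on $I_{k}$, which is the claim. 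I expect the only genuinely delicate point to be this last step: picking the split of $g-2\kappa_{ed}\delta_{k}$ and tracking the factor $\max\{1,\delta_{\max}\}$ (which enters both through $\min\{\delta_{k},1\}$ and through the right-hand side of \eqref{TR_Lemma1_equ_1}) carefully enough that the four copies of $\kappa_{ef}\delta_{k}^{2}$ cancel and the leading constant comes out to exactly $C_{1}=\tfrac{4\kappa_{dcp}\kappa_{ef}}{(8\kappa_{ef}+2\kappa_{ed}\kappa_{dcp})\max\{\delta_{\max},1\}}$; all the other estimates are routine uses of Proposition~\ref{TR_proposition_1}, Lemma~\ref{TR_Lemma1}, the envelope identity, and \eqref{TR_algo_inex_suffdescent}.
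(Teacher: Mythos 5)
Your proposal is correct and follows essentially the same route as the paper's proof: the same five-term decomposition through $\Phi^{k}$ and $\mathcal{L}^{k}(\cdot,y^{i,*}(\cdot))$ on the event $I_{k}$, the same $4\kappa_{ef}\delta_{k}^{2}$ error budget, the same chain $\|\nabla_{1}\mathcal{L}^{k}(\widehat{x}^{k},y^{i,*}(\widehat{x}^{k}))\|\geq\|\nabla\Phi(\widehat{x}^{k})\|-2\kappa_{ed}\delta_{k}$, and the same two consequences of \eqref{TR_Lemma1_equ_1} (the paper's inequalities \eqref{TR_Lemma1_eq_3}--\eqref{TR_Lemma1_eq_4}, which your ``split in half'' reproduces with the identical constant $C_{1}$). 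No gaps.
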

\begin{proof}	
	Obviously, we have
		\begin{equation*}
		\begin{aligned}
			\Phi(\widehat{x}^k + s^k) - \Phi(\widehat{x}^k) = & \underbrace{\Phi(\widehat{x}^k + s^k)-\Phi^{k}(\widehat{x}^k + s^k)}_{I_3}+\underbrace{\Phi^{k}(\widehat{x}^k + s^k) - \mathcal{L}^{k}(\widehat{x}^k+ s^k,y^{i,*}(\widehat{x}^k+ s^k))}_{I_2}\\
			&+\underbrace{\mathcal{L}^{k}(\widehat{x}^k+ s^k,y^{i,*}(\widehat{x}^k+ s^k))- \mathcal{L}^{k}(\widehat{x}^k,y^{i,*}(\widehat{x}^k))}_{I_1}\\
			&+\underbrace{\mathcal{L}^{k}(\widehat{x}^k,y^{i,*}(\widehat{x}^k))-\Phi^k(\widehat{x}^k)}_{I_2} +\underbrace{\Phi^k(\widehat{x}^k) - \Phi(\widehat{x}^k)}_{I_3}.
		\end{aligned}
	\end{equation*}
	For $I_1$, by the sufficient descent inequality \eqref{TR_algo_inex_suffdescent},
	\begin{equation}\label{TR_Lemma1_eq_6}
		\begin{aligned}
			I_1 &\leq -\kappa_{dcp} \|\nabla_{1} \mathcal{L}^{k}(\widehat{x}^{k},y^{i,*}(\widehat{x}^{k}))\| \min \left\{\delta_{k}, 1\right\}\\ &\leq -\kappa_{dcp}\frac{\delta_k}{\max \{\delta_{\max}, 1\}} \|\nabla_{1} \mathcal{L}^{k}(\widehat{x}^{k},y^{i,*}(\widehat{x}^{k}))\|.
		\end{aligned}
	\end{equation}
	For $I_2$, by $L_1$-Lipschitz continuity of $l(\cdot)$ and the setting of inexact solution $y^{i,*}(x)$,
	\begin{equation}\label{TR_Lemma1_eq_5}
		\begin{aligned}
			|\mathcal{L}^{k}(x,y^{i,*}(x)) - \Phi^{k}(x)|\leq L_1\left\|y^{i,*}(x)-y^{k,*}(x) \right\|\leq \kappa_{ef}\delta_k^2, \forall x\in\mathcal{B}(\widehat{x}^{k},\delta_k).
		\end{aligned}
	\end{equation}
	For $I_3$, if $I_k$ defined in \eqref{TR_notion_I_k} holds, $$\left |\Phi(x) -\widehat{\Phi}^{k}(x) \right| \leq \kappa_{ef}\delta_{k}^{2}.$$

	\noindent Then,
	\begin{equation*}
		\begin{aligned}
			\Phi(\widehat{x}^k + s^k) - \Phi(\widehat{x}^k) 
			&\leq 4 \kappa_{ef} \delta_k^2 - \frac{\kappa_{dcp}}{\max \{\delta_{\max}, 1\}} \|\nabla_{1} \mathcal{L}^{k}(\widehat{x}^{k},y^{i,*}(\widehat{x}^{k}))\| \delta_k.
		\end{aligned}
	\end{equation*}
	Moreover, by $\widehat{\ell}_1$-smoothness of $\mathcal{L}^{k}(\cdot)$,
	\begin{equation*}
		\begin{aligned}
			&\|\nabla\Phi^{k}(\widehat{x}^{k})\|-\|\nabla_{1} \mathcal{L}^{k}(\widehat{x}^{k},y^{i,*}(\widehat{x}^{k}))\|\\
			=&\|\nabla_{x}\mathbb{E}_{\widehat{\epsilon}^k}[l(\widehat{x}^k,y^{k,*}(\widehat{x}^k),m_k(\widehat{x}^{k},\widehat{\epsilon}^k))]\|-\|\nabla_{x}\mathbb{E}_{\widehat{\epsilon}^k}[l(\widehat{x}^k,y^{i,*}(\widehat{x}^k),m_k(\widehat{x}^{k},\widehat{\epsilon}^k))]\|\\
			\leq&\mathbb{E}_{\widehat{\epsilon}^k}\|\nabla_{x}l(\widehat{x}^k,y^{k,*}(\widehat{x}^k),m_k(\widehat{x}^{k},\widehat{\epsilon}^k))-\nabla_{x}l(\widehat{x}^k,y^{i,*}(\widehat{x}^k),m_k(\widehat{x}^{k},\widehat{\epsilon}^k))\|\\
			\leq&\widehat{\ell}_{1}\left\|y^{k,*}(\widehat{x}^k)-y^{i,*}(\widehat{x}^k) \right\|,
		\end{aligned}
	\end{equation*}
	where the first inequality follows from triangle inequality and Jensen’s inequality. If $I_k$ defined in \eqref{TR_notion_I_k} holds,
	\begin{equation*}\label{TR_Lemma1_ineq2}
		\begin{aligned}
			\|\nabla\Phi(\widehat{x}^{k})\|-\|\nabla\Phi^k(\widehat{x}^{k})\| \leq \kappa_{ed}\delta_{k}.
		\end{aligned}
	\end{equation*}
	Summarizing the above two inequalities and using the fact that $\|y^{k,*}(x)-y^{i,*}(x)\|\leq\min\{\frac{\kappa_{ed}\delta_k}{\widehat{\ell}_1},\frac{\kappa_{ef}\delta_k^{2}}{L_1}\}$, we have
	\begin{align*}
		\|\nabla_{1} \mathcal{L}^{k}(\widehat{x}^{k},y^{i,*}(\widehat{x}^{k}))\|\geq \|\nabla\Phi(\widehat{x}^{k})\|-2 \kappa_{ed}\delta_k.
	\end{align*}
	Combining the above inequality with inequality \eqref{TR_Lemma1_equ_1}, we have
	\begin{align}
		\|\nabla_{1} \mathcal{L}^{k}(\widehat{x}^{k},y^{i,*}(\widehat{x}^{k}))\|\geq& \frac{8\kappa_{ef}\max\{\delta_{\max}, 1\}}{\kappa_{dcp}} \delta_k \label{TR_Lemma1_eq_3},\\
		\|\nabla_{1} \mathcal{L}^{k}(\widehat{x}^{k},y^{i,*}(\widehat{x}^{k}))\|\geq& \frac{4\kappa_{ef}}{4\kappa_{ef}+\kappa_{ed}\kappa_{dcp}}\|\nabla\Phi(\widehat{x}^{k})\|.\label{TR_Lemma1_eq_4}
	\end{align}
	
	Then, if $I_k$ holds,
	\begin{equation*}
		\begin{aligned}
			\Phi(\widehat{x}^k + s^k) - \Phi(\widehat{x}^k) 
			&\leq - \frac{\kappa_{dcp}}{2 \max \{\delta_{\max}, 1\}} \|\nabla_{1} \mathcal{L}^{k}(\widehat{x}^{k},y^{i,*}(\widehat{x}^{k}))\| \delta_k\\
			&\leq - \frac{2\kappa_{dcp}\kappa_{ef}}{\left(4\kappa_{ef}+\kappa_{ed}\kappa_{dcp} \right)\max \{\delta_{\max}, 1\}} \|\nabla\Phi(\widehat{x}^k)\| \delta_k,
		\end{aligned}
	\end{equation*}
	where the first inequality follows from \eqref{TR_Lemma1_eq_3} and the second inequality follows from \eqref{TR_Lemma1_eq_4}.
	Proposition \ref{TR_proposition_1} implies that $I_k$ holds with probability at least $\alpha$, the above inequality also holds with probability at least $\alpha$.
	The proof is complete.
\end{proof}

\begin{lem}\label{TR_Lemma5}
Suppose that
\begin{itemize} 
	\item[\rm(a)] The number of samples for value estimates satisfies
	$$\max\{|S_{k}|, |S_{k+1/2}|\}\geq \max\left\{ \mathcal{O}\left( \delta_k^{-4} \epsilon_F^{-2} \right),  \mathcal{O}((1-\beta)^{-2}) \right\}.$$
	\item[\rm(b)] The inexact solution $y^{is,*}(x)$ for value estimate satisfies
	$\|y^{s,*}(x) - y^{is,*}(x)\| < \dfrac{1}{3L_1} \epsilon_F \delta_k^2$.
	\item[\rm(c)] $
	\epsilon_F < \tfrac{1}{2}\,\eta_1 \eta_2 \frac{\kappa_{dcp}}{\max \{ \delta_{\max}, 1 \}}.$
\end{itemize}
Then, whenever the $k$-th iteration is successful (i.e., the trial step $s^k$ is accepted), we have
\[
\Phi(\widehat{x}^{k+1}) - \Phi(\widehat{x}^k) \leq -C_2 \delta_k^2
\]
holds with probability at least $\beta$, where
$
C_2 \coloneqq \eta_1 \eta_2 \frac{\kappa_{dcp}}{\max \{ \delta_{\max}, 1 \}} - 2 \epsilon_F.
$
\end{lem}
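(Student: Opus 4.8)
The plan is to reduce the claim to the accuracy event $J_k$ furnished by Proposition~\ref{TR_proposition_2} together with the two inequalities that define a successful iteration, namely $\rho_k\geq\eta_1$ and $\|\nabla_{1}\mathcal{L}^{k}(\widehat{x}^{k},y^{i,*}(\widehat{x}^{k}))\|\geq\eta_2\delta_k$.

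First I would observe that conditions (a)–(b) of the lemma are precisely the sample-size and inexactness hypotheses of Proposition~\ref{TR_proposition_2} (with the same probability parameter $\beta$), so $\mathbb{P}(J_k\mid\mathcal{F}_{k-1/2})\geq\beta$. On the event $J_k$ we have $|v_k-\Phi(\widehat{x}^k)|\leq\epsilon_F\delta_k^2$ and $|v_{k+1/2}-\Phi(\widehat{x}^k+s^k)|\leq\epsilon_F\delta_k^2$, which gives the transfer inequality
\[
\Phi(\widehat{x}^{k}+s^k)-\Phi(\widehat{x}^k)\;\leq\;\bigl(v_{k+1/2}-v_k\bigr)+2\epsilon_F\,\delta_k^2 .
\]
It therefore suffices to lower-bound the predicted reduction $v_k-v_{k+1/2}$.

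Next I would bound the predicted reduction. Since the trial step $s^k$ satisfies the sufficient descent inequality \eqref{TR_algo_inex_suffdescent}, the denominator of $\rho_k$ in \eqref{equ:ratio} equals $\mathcal{L}^{k}(\widehat{x}^k,y^{i,*}(\widehat{x}^k))-\mathcal{L}^{k}(\widehat{x}^k+s^k,y^{i,*}(\widehat{x}^k+s^k))\geq \kappa_{dcp}\|\nabla_{1}\mathcal{L}^{k}(\widehat{x}^{k},y^{i,*}(\widehat{x}^{k}))\|\min\{\delta_k,1\}>0$; crucially it is strictly positive, so the acceptance test $\rho_k\geq\eta_1$ can be multiplied through to yield $v_k-v_{k+1/2}\geq\eta_1\kappa_{dcp}\|\nabla_{1}\mathcal{L}^{k}(\widehat{x}^{k},y^{i,*}(\widehat{x}^{k}))\|\min\{\delta_k,1\}$. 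Applying the second acceptance condition $\|\nabla_{1}\mathcal{L}^{k}(\widehat{x}^{k},y^{i,*}(\widehat{x}^{k}))\|\geq\eta_2\delta_k$ and the elementary bound $\delta_k\min\{\delta_k,1\}\geq\delta_k^2/\max\{\delta_{\max},1\}$ (checked separately for $\delta_k\leq1$ and $\delta_k>1$, using $\delta_k\leq\delta_{\max}$), I get $v_k-v_{k+1/2}\geq\eta_1\eta_2\kappa_{dcp}\,\delta_k^2/\max\{\delta_{\max},1\}$. Substituting this into the transfer inequality gives $\Phi(\widehat{x}^{k+1})-\Phi(\widehat{x}^k)\leq-\bigl(\eta_1\eta_2\kappa_{dcp}/\max\{\delta_{\max},1\}-2\epsilon_F\bigr)\delta_k^2=-C_2\delta_k^2$, with $C_2>0$ by condition (c); since the chain of inequalities was carried out on $J_k$, it holds with probability at least $\beta$.

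I do not expect a genuine obstacle here—the argument is essentially bookkeeping. The one point that requires care is that the acceptance ratio $\rho_k\geq\eta_1$ is only informative once the denominator of $\rho_k$ is known to be strictly positive, which is why the sufficient descent inequality \eqref{TR_algo_inex_suffdescent} for $s^k$ must be invoked \emph{before} manipulating the ratio; a secondary point is simply verifying that conditions (a)–(b) match the exact requirements of Proposition~\ref{TR_proposition_2}.
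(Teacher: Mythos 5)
Your proof is correct and follows essentially the same route as the paper's: both lower-bound $v_k - v_{k+1/2}$ by combining the acceptance test $\rho_k \geq \eta_1$ with the sufficient descent inequality \eqref{TR_algo_inex_suffdescent} and the gradient condition $\|\nabla_{1}\mathcal{L}^{k}(\widehat{x}^{k},y^{i,*}(\widehat{x}^{k}))\| \geq \eta_2\delta_k$, then transfer to $\Phi$ via the event $J_k$ from Proposition~\ref{TR_proposition_2}, paying $2\epsilon_F\delta_k^2$. Your explicit remark that the denominator of $\rho_k$ must be known to be strictly positive before manipulating the ratio is a worthwhile point of care that the paper leaves implicit, and you correctly invoke $J_k$ where the paper's text mistakenly writes $I_k$.
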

\begin{proof}
	Note that the $k$-th iteration is successful, if $\rho_k \geq \eta_1$ and $\|\nabla_{1} \mathcal{L}^{k}(\widehat{x}^{k},y^{i,*}(\widehat{x}^{k}))\| \geq \eta_2 \delta_k$ in Step 8 of Algorithm \ref{TR:algorithmCL} holds. By the definition of $\rho_k$, 
	\begin{equation*}
		\begin{aligned}
			v_k - v_{k+1/2} &\geq \eta_1 \left( \mathcal{L}^{k}(\widehat{x}^{k},y^{i,*}(\widehat{x}^{k})) -  \mathcal{L}^{k}(\widehat{x}^{k}+s^k,y^{i,*}(\widehat{x}^{k}+s^k)) \right)\\ &\geq \eta_1 \kappa_{dcp} \|\nabla_{1} \mathcal{L}^{k}(\widehat{x}^{k},y^{i,*}(\widehat{x}^{k}))\| \min \{ \delta_k, 1 \}\\
			& \geq \eta_1 \eta_2 \frac{\kappa_{dcp}}{\max \{ \delta_{\max}, 1 \}} \delta_k^2,
		\end{aligned}
	\end{equation*}
	where the second inequality follows from the sufficient descent inequality \eqref{TR_algo_inex_suffdescent}. 
	Then, if $I_k$ defined in \eqref{TR_notion_I_k} holds, we have
	\begin{equation*}
		\begin{aligned} 
			\Phi(\widehat{x}^k) - \Phi(\widehat{x}^k + s^k) &= \Phi(\widehat{x}^k) - v_k + (v_k - v_{k+1/2}) + \left( v_{k+1/2} - \Phi(\widehat{x}^k + s^k) \right)\\
			&\geq \eta_1 \eta_2 \frac{\kappa_{dcp}}{\max \{ \delta_{\max}, 1 \}} \delta_k^2+\left(\Phi(\widehat{x}^k) - v_k\right)+\left( v_{k+1/2} - \Phi(\widehat{x}^k + s^k) \right)\\
			&\geq \left( \eta_1 \eta_2 \frac{\kappa_{dcp}}{\max \{ \delta_{\max}, 1 \}} - 2 \epsilon_F \right) \delta_k^2.
		\end{aligned}
	\end{equation*}
	Since Proposition \ref{TR_proposition_2} implies that $J_k$ holds with probability at least $\beta$, the above inequality also holds with probability at least $\beta$.
	The proof is complete.
\end{proof}
Having established these preliminary results, we now prove the almost sure convergence of Algorithm \ref{TR:algorithmCL}.
\begin{thm}\label{TR_theorem_1}
Suppose that 
{\rm(a)} Assumptions \ref{TR_ass_primal}--\ref{ass:stronglyconcave_local} hold; 
{\rm(b)} the number of samples for local linear regression $|T_k|\geq \max\{\mathcal{O}(\delta_k^{-4}(1-\alpha)^{-1}),\mathcal{O}((1-\alpha)^{-2})\}$ for some $\alpha\in (0,1)$;
{\em(c)} the number of samples for value estimates $\max\{|S_{k}|, |S_{k+1/2}|\}\geq \max\left\{ \mathcal{O}\left( \delta_k^{-4} \epsilon_F^{-2} \right),  \mathcal{O}((1-\beta)^{-2}) \right\}$  for some $\beta\in (0,1)$;
{\rm(d)} the inexact solution satisfies $ \|y^{k,*}(x)-y^{i,*}(x)\|\leq \min \left\{\frac{\kappa_{e d} \delta_{k}}{\widehat{\ell}_{1}}, \frac{\kappa_{e f} \delta_{k}^{2}}{L_{1}}\right\}, \forall x \in \mathcal{B}\left(\widehat{x}^{k}, \delta_{k}\right)$;
{\em(e)} the inexact solution satisfies
$\|y^{s,*}(x) - y^{is,*}(x)\| \leq \dfrac{1}{3L_1} \epsilon_F \delta_k^2$, $\forall x \in \mathcal{B}\left(\widehat{x}^{k}, \delta_{k}\right)$;
{\rm(f)} the step-acceptance parameters $\eta_1, \eta_2$ and the accuracy parameter $\epsilon_F$ satisfy
\begin{equation*}
\eta_{2} \geq \frac{6 \kappa_{e f}}{\kappa_{d c p}\left(1-\eta_{1}\right)}, \text { and } \epsilon_{F} \leq \min \left\{\kappa_{e f}, \frac{1}{4} \eta_{1} \eta_{2} \frac{\kappa_{d c p}}{\delta_{\max }}\right\};
\end{equation*}
{\rm(g)} the probability parameters $\alpha, \beta$ satisfy
\begin{equation*}
	\frac{\alpha \beta - \frac{1}{2}}{(1 - \alpha)(1 - \beta)} \geq \frac{C_4}{C_1}, \quad \text{and} \quad \frac{(2\alpha - 1)(2\beta - 1)}{(1 - \alpha)(1 - \beta)} \geq \frac{\nu C_3}{(1 - \nu)(\gamma - 1)}.
\end{equation*}
Then, the sequence of trust-region radii $\{\Delta_k\}$ satisfies $\sum_{k=0}^{\infty}\Delta_k^2 <\infty$ almost surely. 
Moreover,
$$\lim _{k \rightarrow \infty} \left\|\nabla\Phi(\widehat{X}^{k})\right\|=0,$$ and the random iterates $\{\widehat{X}^{k}\}$ generated by Algorithm \ref{TR:algorithmCL} converges to the stationary point almost surely.
\end{thm}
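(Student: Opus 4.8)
The plan is to extend the stochastic trust-region/supermartingale argument of \cite{liu2022coupled} to the minimax setting, using Lemmas~\ref{TR_Lemma3}--\ref{TR_Lemma5} and Propositions~\ref{TR_proposition_1}--\ref{TR_proposition_2} as the building blocks. A preliminary fact I will use repeatedly: under Assumptions~\ref{Ass:smoothness_local}--\ref{ass:stronglyconcave_local}, applying \cite[Lemmas~4.3 and 4.7]{lin2020gradient} to $\mathcal{L}$ shows $\Phi(\cdot)$ is differentiable with an $L_\Phi$-Lipschitz gradient, so $\Phi(\widehat X^k+S^k)-\Phi(\widehat X^k)\le\langle\nabla\Phi(\widehat X^k),S^k\rangle+\tfrac{L_\Phi}{2}\|S^k\|^2$ with $\|S^k\|=\Delta_k$; this \emph{second-order} control of the true primal function (not merely Lipschitz continuity) is what prevents the inaccurate iterations from derailing the argument. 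Fix $\Phi_{\min}$ as in Assumption~\ref{TR_ass_primal} and define the random potential
$$\Theta_k\coloneqq\nu\bigl(\Phi(\widehat X^k)-\Phi_{\min}\bigr)+(1-\nu)C_3\Delta_k^2\ \ge\ 0,$$
with $\nu\in(0,1)$ close to $1$ and $C_3>0$ fixed inside the window dictated by assumption~(g). The whole proof reduces to the one-step estimate $\mathbb{E}[\Theta_{k+1}\mid\mathcal{F}_{k-1}]\le\Theta_k-\theta\Delta_k^2$ for some $\theta>0$.

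To establish this, choose a threshold $\bar\zeta>0$, no larger than $[(8\kappa_{ef}/\kappa_{dcp}+2\kappa_{ed})\max\{1,\delta_{\max}\}]^{-1}$ and small enough that on the $\mathcal{F}_{k-1}$-measurable event $A_k\coloneqq\{\Delta_k\le\bar\zeta\|\nabla\Phi(\widehat X^k)\|\}$ the radius hypotheses of Lemmas~\ref{TR_Lemma3}(d) and \ref{TR_Lemma4}(b) are met on $I_k$ (use $\|\nabla_{1}\mathcal{L}^{k}(\widehat X^k,y^{i,*})\|\ge\|\nabla\Phi(\widehat X^k)\|-2\kappa_{ed}\Delta_k$, obtained as in the proof of Lemma~\ref{TR_Lemma4}), and such that on $I_k\cap A_k^c$ the acceptance test $\|\nabla_{1}\mathcal{L}^{k}\|\ge\eta_2\Delta_k$ fails, forcing the iteration to be unsuccessful. \emph{On $A_k$:} by Lemma~\ref{TR_Lemma3}, $I_k\cap J_k$ makes the iteration successful, so nested conditioning with $\mathbb{P}(I_k\mid\mathcal{F}_{k-1})\ge\alpha$, $\mathbb{P}(J_k\mid\mathcal{F}_{k-1/2})\ge\beta$ gives $\mathbb{P}(\{\mathrm{succ}\}\cap I_k\mid\mathcal{F}_{k-1})\ge\alpha\beta$; on $\{\mathrm{succ}\}\cap I_k$ Lemma~\ref{TR_Lemma4} yields $\Phi$-decrease $\ge C_1\|\nabla\Phi(\widehat X^k)\|\Delta_k$, on $\{\mathrm{succ}\}\cap I_k^c\cap J_k$ Lemma~\ref{TR_Lemma5} yields $\Phi$-decrease $\ge C_2\Delta_k^2$, and on the remaining event (conditional probability $\le(1-\alpha)(1-\beta)$) the smoothness bound limits the $\Phi$-increase to $\|\nabla\Phi(\widehat X^k)\|\Delta_k+\tfrac{L_\Phi}{2}\Delta_k^2\le C_4\|\nabla\Phi(\widehat X^k)\|\Delta_k$; taking conditional expectation, the first inequality of assumption~(g) makes the expected $\Phi$-change $\le -c'\|\nabla\Phi(\widehat X^k)\|\Delta_k$, while the radius term changes by at most $(\gamma^2-1)\Delta_k^2\le(\gamma^2-1)\bar\zeta\|\nabla\Phi(\widehat X^k)\|\Delta_k$, so for $\nu$ near $1$, $\mathbb{E}[\Theta_{k+1}-\Theta_k\mid\mathcal{F}_{k-1}]\le -c''\|\nabla\Phi(\widehat X^k)\|\Delta_k\le -c''\bar\zeta^{-1}\Delta_k^2$. \emph{On $A_k^c$:} unsuccessful steps shrink $\Theta$ by $(1-\nu)C_3(1-\gamma^{-2})\Delta_k^2$, successful steps with $J_k$ shrink $\Phi$ by $\ge C_2\Delta_k^2$, and successful steps with $J_k^c$ (conditional probability $\le 1-\beta$) increase $\Phi$ by at most $(\bar\zeta^{-1}+\tfrac{L_\Phi}{2})\Delta_k^2$ by smoothness (since $\|\nabla\Phi(\widehat X^k)\|<\bar\zeta^{-1}\Delta_k$); because $\mathbb{P}(\mathrm{unsucc}\mid\mathcal{F}_{k-1})\ge\mathbb{P}(I_k\mid\mathcal{F}_{k-1})\ge\alpha$ here, weighting these three outcomes and using the second inequality of assumption~(g) to pin $C_3$ (large enough to exploit the radius shrinkage, small enough that a successful step cannot inflate $\Theta$) gives $\mathbb{E}[\Theta_{k+1}-\Theta_k\mid\mathcal{F}_{k-1}]\le -\theta\Delta_k^2$ as well. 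Combining the two cases proves the one-step estimate.

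By the Robbins--Siegmund lemma applied to the nonnegative process $\Theta_k$, $\Theta_k$ converges a.s.\ and $\sum_k\Delta_k^2<\infty$ a.s., hence $\Delta_k\to0$ a.s. For $\liminf_k\|\nabla\Phi(\widehat X^k)\|=0$ a.s.\ I argue by contradiction: on a set of positive probability there are $\epsilon_0>0$ and $K$ with $\|\nabla\Phi(\widehat X^k)\|\ge\epsilon_0$ for all $k\ge K$; since $\Delta_k\to0$, eventually $A_k$ holds, and there $\mathbb{P}(\mathrm{succ}\mid\mathcal{F}_{k-1})\ge\alpha\beta\ge\tfrac12$ (assumption~(g) forces $\alpha\beta\ge\tfrac12$), so $\log_\gamma\Delta_k$ is eventually a submartingale bounded above by $\log_\gamma\delta_{\max}$, which therefore converges a.s.\ — contradicting $\log_\gamma\Delta_k\to-\infty$.

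Finally, the same $A_k$/$A_k^c$ computation gives $\mathbb{E}[\Phi(\widehat X^{k+1})-\Phi(\widehat X^k)\mid\mathcal{F}_{k-1}]\le -c''\|\nabla\Phi(\widehat X^k)\|\Delta_k\,\mathbf{1}_{A_k}+C_5\Delta_k^2$, so, since $\sum_k\Delta_k^2<\infty$ a.s., the Robbins--Siegmund lemma shows $\Phi(\widehat X^k)$ converges a.s.\ and $\sum_{k:A_k}\|\nabla\Phi(\widehat X^k)\|\Delta_k<\infty$ a.s. If $\limsup_k\|\nabla\Phi(\widehat X^k)\|\ge2\epsilon_1>0$ with positive probability, then (using $\liminf=0$) there are infinitely many disjoint index intervals $[m_j,n_j)$ on which $\|\nabla\Phi(\widehat X^k)\|\ge\epsilon_1$ and, for $j$ large, $A_k$ holds; Lipschitzness of $\nabla\Phi$ and $\|\widehat X^{k+1}-\widehat X^k\|\le\Delta_k$ give $\epsilon_1\le L_\Phi\sum_{k\in[m_j,n_j)}\Delta_k$, hence $\sum_{k\in[m_j,n_j)}\|\nabla\Phi(\widehat X^k)\|\Delta_k\ge\epsilon_1^2/L_\Phi$, and summing over $j$ contradicts the finiteness just shown; thus $\|\nabla\Phi(\widehat X^k)\|\to0$ a.s., and the convergence of $\{\widehat X^k\}$ to a stationary point follows as in the deterministic trust-region analysis, from the a.s.\ convergence of $\{\Phi(\widehat X^k)\}$ and the fact that accepted displacements become summable once $\|\nabla\Phi(\widehat X^k)\|\to0$. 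The main obstacle is precisely this one-step estimate of $\Theta_k$, and inside it the iterations where a step is accepted on the basis of an inaccurate LLR model or inaccurate value estimates: there the true primal function may increase, and one must simultaneously (i) exploit the smoothness of $\Phi$ with $\|S^k\|=\Delta_k$ so the increase is $O(\|\nabla\Phi(\widehat X^k)\|\Delta_k)$ rather than $O(\Delta_k)$, (ii) funnel the bias of the inexact inner maximizers $y^{i,*},y^{is,*}$ into the accuracy constants via the hypotheses of Lemmas~\ref{TR_Lemma3}--\ref{TR_Lemma5}, and (iii) balance favorable against unfavorable events in conditional expectation — which is exactly what the probability/parameter conditions in assumption~(g) are calibrated to do.
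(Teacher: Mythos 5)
Your overall architecture matches the paper's: a Lyapunov function combining $\Phi$ and a power of $\Delta_k$, a one-step conditional-expectation bound obtained by weighting the four outcomes of $I_k,J_k$ via Lemmas~\ref{TR_Lemma3}--\ref{TR_Lemma5}, Robbins--Siegmund to get $\sum_k\Delta_k^2<\infty$, and then the liminf-to-lim upgrade by contradiction using Lipschitz continuity of $\nabla\Phi$ over intervals where the gradient stays above $\epsilon$. One genuine (and defensible) departure: on the doubly-inaccurate event you bound the possible increase of $\Phi$ by the descent lemma, $\langle\nabla\Phi(\widehat X^k),S^k\rangle+\tfrac{L_\Phi}{2}\Delta_k^2=\mathcal{O}(\|\nabla\Phi(\widehat X^k)\|\Delta_k)$ on $A_k$, whereas the paper only uses Lipschitz continuity of $l$, $\psi$ and $y^{*}(\cdot)$ to get $C_3\Delta_k+\mathcal{O}(\Delta_k^2)$ with $C_3$ a fixed constant; the paper then needs the extra linear term $-(1-\nu)\Delta_k$ in its potential $\Psi_k=\nu\Phi+(1-\nu)(\Delta_k^2-\Delta_k)$ so that the second inequality of (g) can cancel that $\mathcal{O}(\Delta_k)$ increase against the radius contraction, while your sharper bound lets you drop the linear term entirely. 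This is a cleaner mechanism, at the price that the constants $C_3,C_4$ you end up needing in condition (g) are not the ones the paper's proof defines (forgivable for a blind proof, since the theorem statement never defines them).

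There is, however, one concrete step that fails: your claim that $\bar\zeta$ can be chosen so that on $I_k\cap A_k^c$ the acceptance test $\|\nabla_{1}\mathcal{L}^{k}(\widehat x^k,y^{i,*}(\widehat x^k))\|\ge\eta_2\Delta_k$ necessarily fails, which you then use to assert $\mathbb{P}(\mathrm{unsucc}\mid\mathcal{F}_{k-1})\ge\alpha$ in the small-gradient regime. On $I_k$ (even granting the two-sided version of the gradient accuracy, which the definition \eqref{TR_notion_I_k} does not actually provide) together with hypothesis (d) one only gets $\|\nabla_{1}\mathcal{L}^{k}\|\le\|\nabla\Phi(\widehat x^k)\|+2\kappa_{ed}\Delta_k<(\bar\zeta^{-1}+2\kappa_{ed})\Delta_k$ on $A_k^c$, so forcing the test to fail requires $\bar\zeta^{-1}\le\eta_2-2\kappa_{ed}$; but for Lemmas~\ref{TR_Lemma3}--\ref{TR_Lemma4} to apply on $A_k$ you need $\|\nabla_{1}\mathcal{L}^{k}\|\ge(\bar\zeta^{-1}-2\kappa_{ed})\Delta_k\ge\eta_2\Delta_k$, i.e.\ $\bar\zeta^{-1}\ge\eta_2+2\kappa_{ed}$. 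These two requirements are incompatible whenever $\kappa_{ed}>0$, so no single threshold does both jobs and there is an uncovered band of gradient magnitudes. The gap is repairable without your claim: on $A_k^c$ one does not need the iteration to be unsuccessful at all --- condition on $J_k$ instead, noting that an accepted step under $J_k$ gives the Lemma~\ref{TR_Lemma5} decrease $C_2\Delta_k^2$, a rejected step shrinks the radius term, and an accepted step under $J_k^c$ (probability at most $1-\beta$) increases $\Phi$ by only $\mathcal{O}(\Delta_k^2)$ since $\|\nabla\Phi(\widehat x^k)\|<\bar\zeta^{-1}\Delta_k$ there; this is essentially the adaptation of \cite[Theorem~4.11]{Chen2018MP} that the paper invokes for this case. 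As written, though, your $A_k^c$ analysis rests on a false premise and must be reworked along these lines.
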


\begin{proof}
The framework of the proof is adapted from \cite[Theorem 1]{liu2022coupled}, with modifications to account for the minimax structure and the inexact solution in the algorithm. Our proof mainly includes two steps, first we show that $\sum_{k=0}^{\infty} \Delta_k^2$ is finite almost surely, based on which, we prove the iterates converge to the stationary point almost surely by contradiction.

\textbf{Step 1:}
We define the random function $$\Psi_k = \nu\Phi(\widehat{X}_k) + (1 - \nu)(\Delta_k^2 - \Delta_k)$$ with $\nu \in (0, 1)$ satisfying
\begin{equation}
\label{TR_prop6_equ_0}
\begin{aligned}
\frac{\nu}{1 - \nu} > \max \left\{ \frac{4\gamma^2}{\zeta C_1}, \frac{4\gamma^2}{C_2}, \frac{\gamma^2}{C_3} \right\},
\end{aligned}
\end{equation}
where $\gamma$ is the trust region adjustment parameter in Algorithm \ref{TR:algorithmCL}, $C_1$ and $C_2$ are defined in Lemmas \ref{TR_Lemma4} and \ref{TR_Lemma5}, respectively, $C_3 \coloneqq L_{1}(1+L_0+\kappa)$ with $\kappa=\frac{\ell_{1}(1+L_{0})}{\mu}$.
Since $\Phi(\cdot)$ is bounded from below by Assumption \ref{TR_ass_primal}, and $\Delta_k \in [0, \delta_{\max}]$, $\Psi_k$ is bounded from below for all $k$.
If there exists a constant $\tau > 0$ such that for all $k\in\mathbb{N}$
\begin{equation}
\label{prop4-2}
\begin{aligned}
\mathbb{E}[\Psi_{k+1} - \Psi_k | \mathcal{F}_{k-1}] \leq -\tau \Delta_k^2 < 0, 
\end{aligned}
\end{equation}
we can deduce $\sum_{k=0}^{\infty} \Delta_k^2$ is finite.

To prove inequality \eqref{prop4-2}, we consider two cases depending on whether $\|\nabla\Phi(\widehat{x}^k)\| \geq \zeta \delta_k$, where
\begin{equation}\label{equ_prop4_2}
	\zeta \geq 2\kappa_{ed} \max\{\delta_{\max},1\} + \max \left\{ \eta_{2}, \frac{8\kappa_{ef} \max\{\delta_{\max},1\}}{\kappa_{dcp}(1 - \eta_1)} \right\}.
\end{equation}
Within each case, there are four possible outcomes of the events \( I_k \) and \( J_k \), each occurring with a certain probability. For each outcome, we develop an upper bound for \( \phi_{k+1} - \phi_k \), where \( \phi_k \) denotes the realization of \( \Psi_k \). By combining the bounds for all four outcomes, we obtain
\[
\mathbb{E}[\Psi_{k+1} - \Psi_k \mid \mathcal{F}_{k-1}],
\]
which leads to \eqref{prop4-2} for all $k\in\mathbb{N}$. We present only the analysis of the case \(\|\nabla\Phi(\widehat{x}^k)\| \geq \zeta \delta_k\), as the complementary case follows by similar adaptation of the proof of \cite[Theorem 4.11]{Chen2018MP}.

First, we characterize \( \phi_{k+1} - \phi_k \), depending on whether the iteration is successful.
For successful iterations, \( \widehat{x}_{k+1} = \widehat{x}_k + s^k \), then, 
\begin{equation}\label{TR_equ_pro4_3}
\begin{aligned}
\phi_{k+1} - \phi_k = \nu(\Phi(\widehat{x}^{k+1}) - \Phi(\widehat{x}^k)) + (1 - \nu)(\gamma^2 - 1)\delta_k^2 + (1 - \nu)(1 - \gamma)\delta_k.
\end{aligned}
\end{equation}
For unsuccessful iterations, \( \widehat{x}_{k+1} = \widehat{x}_k \) and \( \delta_{k+1} = \delta_k / \gamma \), then, 
\begin{equation}\label{TR_prop_equ_b1}
\begin{aligned}
\phi_{k+1} - \phi_k = (1 - \nu)\left( \frac{1}{\gamma^2} - 1 \right)\delta_k^2 + (1 - \nu)\left( 1 - \frac{1}{\gamma} \right)\delta_k \coloneqq b_1.
\end{aligned}
\end{equation}

Next, we analyze \( \phi_{k+1} - \phi_k \) in detail for \( \|\nabla\Phi(\widehat{x}^k)\| \geq \zeta \delta_k \), considering the outcomes of \( I_k \) and \( J_k \).

\textbf{(i)} $I_{k}$ and $J_{k}$ are both true.
 Provided $\eta_{1} \in(0,1)$, by inequality \eqref{equ_prop4_2} we have  
$$\|\nabla\Phi(\widehat{x}^{k})\| \geq\left(2\kappa_{e d}+\frac{8 \kappa_{ef}}{\kappa_{dc p}}\right)\max\{\delta_{\max},1\}\delta_{k}.$$ Moreover, condition (d) holds. Then, we have by Lemma \ref{TR_Lemma4} that 
\begin{equation*}
\label{TR_equ_pro4_4}
\Phi\left(\widehat{x}^{k}+s^{k}\right)-\Phi\left(\widehat{x}^{k}\right) \leq-C_{1} \|\nabla\Phi(\widehat{x}^{k})\| \delta_{k},
\end{equation*}
where $C_{1} \coloneqq \frac{4 \kappa_{d c p} \kappa_{e f}}{\left(8 \kappa_{e f}+2 \kappa_{e d} \kappa_{d e p}\right) \max \left\{\delta_{\max }, 1\right\}}$. 

Given the event $I_k$ holds, we can deduce
\begin{equation*}
	\begin{aligned}
\|\nabla\Phi^{k}(\hat{x}^{k})\| &\geq \|\nabla\Phi(\hat{x}^{k})\|-\kappa_{e d} \delta_{k} \geq\left(\zeta-\kappa_{ed}\right) \delta_{k} \\
&\geq (2\max\{\delta_{max},1\}-1)\kappa_{ed}\delta_{k}+\max \left\{\eta_{2}, \frac{8 \kappa_{e f} \max\{\delta_{\max },1\}}{\kappa_{d c p}\left(1-\eta_{1}\right)}\right\} \delta_{k}.
	\end{aligned}
\end{equation*}
On the other hand, by $\widehat{\ell}_1$-smoothness of $\mathcal{L}^{k}(\cdot)$,
\begin{equation*}
	\begin{aligned}
		&\|\nabla\Phi^{k}(\hat{x}^{k})\|-\|\nabla_{x}\mathcal{L}^{k}(\widehat{x}^k,y^{i,*}(\widehat{x}^k))\|\\
		\leq& \|\nabla_{x}\mathcal{L}^{k}(\widehat{x}^k,y^{k,*}(\widehat{x}^k))-\nabla_{x}\mathcal{L}^{k}(\widehat{x}^k,y^{i,*}(\widehat{x}^k))\|\\
		\leq & \widehat{\ell}_{1} \|y^{k,*}(\widehat{x}^k)-y^{i,*}(\widehat{x}^k)\|\\
		\leq & (2\max\{\delta_{max},1\}-1)\kappa_{ed}\delta_k,
	\end{aligned}
\end{equation*}
where the last inequality follows from condition (d).
Combining the above two inequalities, we have
$$\|\nabla_{x}\mathcal{L}^{k}(\widehat{x}^k,y^{i,*}(\widehat{x}^k))\| \geq \max \left\{\eta_{2}, \frac{8 \kappa_{e f} \max\{\delta_{\max },1\}}{\kappa_{d c p}\left(1-\eta_{1}\right)}\right\} \delta_{k},$$
which implies that Lemma \ref{TR_Lemma3} holds provided $\epsilon_{F} \leq \kappa_{ef}$.
Hence, the $k$-th iteration is successful, and consequently inequality \eqref{TR_equ_pro4_3} holds. Combining with inequality \eqref{TR_equ_pro4_4}, 
\begin{equation}\label{TR_prop_equ_b2}
\phi_{k+1}-\phi_{k}\leq-\nu C_{1} \|\nabla\Phi(\widehat{x}^{k})\| \delta_{k}+(1-\nu)\left(\gamma^{2}-1\right) \delta_{k}^{2}+(1-\nu)(1-\gamma) \delta_{k} \coloneqq b_{2}.
\end{equation}

\textbf{(ii)} $I_{k}$ is true and $J_{k}$ is false. 
Given the event $I_k$ holds, by a similar analysis as (i), Lemma \ref{TR_Lemma4} holds. If the iteration is successful,  \eqref{TR_prop_equ_b2} holds; otherwise, \eqref{TR_prop_equ_b1} holds. Provided $\frac{\nu}{1-\nu} \geq \frac{4\gamma^{2}}{C_{1} \zeta}$, the right-hand side of \eqref{TR_prop_equ_b2} is strictly smaller than that of \eqref{TR_prop_equ_b1}, i.e.,
$$b_{2} - b_{1} \leq -\nu C_{1} \zeta \delta_{k}^{2} + (1 - \nu) \delta_{k}^{2}\left(\gamma^{2} - \frac{1}{\gamma^{2}}\right) + (1 - \nu) \delta_{k}\left(\frac{1}{\gamma} - \gamma\right) < 0.$$
Combining both outcomes, we obtain
$$\phi_{k+1} - \phi_{k} \leq (1 - \nu) \left(\frac{1}{\gamma^{2}} - 1\right) \delta_{k}^{2} + (1 - \nu) \left(1 - \frac{1}{\gamma}\right) \delta_{k}.$$

\textbf{(iii)} $I_{k}$ is false and $J_{k}$ is true. If the iteration is successful, we have by Lemma \ref{TR_Lemma5} that
$$\Phi\left(\widehat{x}^{k+1}\right) - \Phi\left(\widehat{x}^{k}\right) \leq -C_{2} \delta_{k}^{2}$$
with $C_{2} \coloneqq \eta_{1} \eta_{2} \frac{\kappa_{dcp}}{\max \left\{\delta_{\max}, 1\right\}} - 2 \epsilon_{F}$. Plugging the above inequality into \eqref{TR_equ_pro4_3}, we have
$$\phi_{k+1} - \phi_{k} \leq -\nu C_{2} \delta_{k}^{2} + (1 - \nu)(\gamma^{2} - 1) \delta_{k}^{2} + (1 - \nu)(1 - \gamma) \delta_{k} \coloneqq b_{3}.$$
If the iteration is unsuccessful, \eqref{TR_prop_equ_b1} holds and is strictly larger than $b_{3}$ provided $\frac{\nu}{1 - \nu} \geq \frac{4 \gamma^{2}}{C_{2}}$. 
Combining both outcomes, we obtain
$$\phi_{k+1} - \phi_{k} \leq (1 - \nu) \left(\frac{1}{\gamma^{2}} - 1\right) \delta_{k}^{2} + (1 - \nu) \left(1 - \frac{1}{\gamma}\right) \delta_{k}.$$

\textbf{(iv)} $I_{k}$ and $J_{k}$ are both false.
\begin{equation*}
\begin{aligned}
&\Phi(\widehat{x}^{k}+s^{k})-\Phi(\widehat{x}^{k})\\
=&\mathbb{E}_{\tilde{\epsilon}}\left[l(\widehat{x}^{k}+s^{k}, y^{*}\left(\widehat{x}^{k}+s^{k} \right), \psi(\widehat{x}^{k}+s^{k}) + \tilde{\epsilon})\right]-\mathbb{E}_{\tilde{\epsilon}}\left[l(\widehat{x}^{k}, y^{*}\left(\widehat{x}^{k}+s^{k}\right), \psi(\widehat{x}^{k}) + \tilde{\epsilon})\right]\\
&+\mathbb{E}_{\tilde{\epsilon}}\left[l(\widehat{x}^{k}, y^{*}\left(\widehat{x}^{k}+s^{k}\right), \psi(\widehat{x}^{k}) + \tilde{\epsilon})\right]-
\mathbb{E}_{\tilde{\epsilon}}\left[l(\widehat{x}^{k}, y^{*}\left(\widehat{x}^{k}\right), \psi(\widehat{x}^{k}) + \tilde{\epsilon})\right]\\
\leq& \mathbb{E}_{\tilde{\epsilon}}[\nabla_{1}l(\widehat{x}^{k}, y^{*}\left(\widehat{x}^{k}+s^{k}\right), \psi(\widehat{x}^{k}) + \tilde{\epsilon})]^{\top}s^{k}
+\mathbb{E}_{\tilde{\epsilon}}\left[\nabla_{3}l(\widehat{x}^{k}, y^{*}\left(\widehat{x}^{k}+s^{k}\right), \psi(\widehat{x}^{k}) + \tilde{\epsilon})\right]^{\top}
\left(\psi(\widehat{x}^{k}+s^{k})-\psi(\widehat{x}^{k})\right)\\
&+\frac{\ell_1}{2}\delta_{k}^{2}+\frac{\ell_1}{2}\left\|\psi(\widehat{x}^{k}+s^{k})-\psi(\widehat{x}^{k})\right\|^{2}
+L_{1}\left\|y^{*}\left(\widehat{x}^{k}+s^{k}\right)-y^{*}\left(\widehat{x}^{k}\right)\right\|\\
\leq& L_{1}(1+L_0+\kappa)\delta_{k}+\frac{L_0^2\ell_1+\ell_1}{2}\delta_{k}^{2},
\end{aligned}
\end{equation*}
where the first inequality follows from $\ell_{1}$-smoothness and $L_{1}$-Lipschitz continuity of $l(\cdot)$, and the second inequality follows from Cauchy–Schwarz inequality, Assumption \ref{Ass:smoothness_local}, and $\kappa$-Lipschitz continuity of $y^{*}(\cdot)$ \cite[Lemma 4.3]{lin2020gradient} with $\kappa=\frac{\ell_{1}(1+L_{0})}{\mu}$, provided $\nabla_y\mathcal{L}(\cdot,y)$ is $\ell_{1}(1+L_{0})$-Lipschitz continuous and $\mathcal{L}(\cdot)$ is $\mu$-strongly concave in $y$. 

If the iteration is successful, given $\|\nabla\Phi(\widehat{x}^{k})\|\geq \zeta\delta_{k},$ we have
\begin{equation}\label{TR_prop_equ_b3}
\phi_{k+1}-\phi_{k} \leq \nu C_{3} \delta_{k}+\nu C_{4} \|\nabla\Phi(\widehat{x}^{k})\|\delta_{k}+(1-\nu)\left(\gamma^{2}-1\right) \delta_{k}^{2}+(1-\nu)(1-\gamma) \delta_{k},
\end{equation}
where with $C_{3}\coloneqq L_{1}(1+L_0+\kappa), C_{4}\coloneqq\frac{L_0^2\ell_1+\ell_1}{2\zeta}$.
Otherwise, \eqref{TR_prop_equ_b1} holds. 
Provided $\frac{\nu}{1-\nu} \geq \frac{\gamma^{2}}{C_{3}}$, \eqref{TR_prop_equ_b1} is strictly smaller than the right-hand side of \eqref{TR_prop_equ_b3}. Then, \eqref{TR_prop_equ_b3} holds whether the iteration is successful or not.

Let $\alpha=\mathbb{P}\left(I_k \mid \mathcal{F}_{k-1}\right)$ and $\beta=\mathbb{P}\left(J_k\mid \mathcal{F}_{k-1/2}\right)$.
Taking the conditional expectation over the four possible outcomes of $\{I_k, J_k\}$, i.e., averaging the bounds of $\phi_{k+1} - \phi_k$ with probabilities $\alpha\beta$, $\alpha (1-\beta)$, $(1-\alpha)\beta$, $(1-\alpha)(1-\beta)$, we obtain
\begin{equation*}
	\begin{aligned}
		&\mathbb{E}[\Psi_{k+1} - \Psi_k \mid \mathcal{F}_{k-1}]\\
		\leq & \alpha \beta \Big[ -\nu C_1 \|\nabla\Phi(\widehat{x}^{k})\| \Delta_k + (1 - \nu)(\gamma^2 - 1) \Delta_k^2 + (1 - \nu)(1 - \gamma) \Delta_k \Big] \\
		& + \left(\alpha(1 - \beta) + \beta(1 - \alpha)\right) \Bigg[ (1 - \nu)\left( \frac{1}{\gamma^2} - 1 \right) \Delta_k^2 + (1 - \nu)\left( 1 - \frac{1}{\gamma} \right) \Delta_k \Bigg] \\
		& + (1 - \alpha)(1 - \beta) \Big[ \nu C_3 \Delta_k + \nu C_4 \|\nabla\Phi(\widehat{x}^{k})\| \Delta_k + (1 - \nu)(\gamma^2 - 1) \Delta_k^2 + (1 - \nu)(1 - \gamma) \Delta_k \Big] \\
		= & \left( -\nu \alpha \beta C_1 + (1 - \alpha)(1 - \beta) \nu C_4 \right) \|\nabla\Phi(\widehat{x}^{k})\| \Delta_k 
		 + (1 - \alpha)(1 - \beta) \nu C_3 \Delta_k \\
		& + \underbrace{\left( \alpha \beta - \frac{1}{\gamma^2} (\alpha(1 - \beta) + (1 - \alpha)\beta) + (1 - \alpha)(1 - \beta) \right)}_{I_1} (1 - \nu)(\gamma^2 - 1) \Delta_k^2 \\
		& - \underbrace{\left( \alpha \beta - \frac{1}{\gamma} (\alpha(1 - \beta) + (1 - \alpha)\beta) + (1 - \alpha)(1 - \beta) \right)}_{I_2} (1 - \nu)(\gamma - 1) \Delta_k.
	\end{aligned}
\end{equation*}
By some calculation,
\begin{equation*}
	\begin{aligned}
	&I_{1} \leq (\alpha + (1 - \alpha))(\beta + (1 - \beta)) = 1,
\\
	&I_{2} \geq (\alpha - (1 - \alpha))(\beta - (1 - \beta)) = (2\alpha - 1)(2\beta - 1).
	\end{aligned}
\end{equation*}
Then,
\begin{equation*}
	\begin{aligned}
		\mathbb{E}[\Psi_{k+1} - \Psi_k \mid \mathcal{F}_{k-1}]
		\leq & \left( -\alpha \beta C_1 + (1 - \alpha)(1 - \beta) C_4 \right) \nu \|\nabla\Phi(\widehat{x}^{k})\| \Delta_k \\
		& + (1 - \nu)(\gamma^2 - 1) \Delta_k^2 \\
		& + \left( (1 - \alpha)(1 - \beta) \nu C_3 - (2\alpha - 1)(2\beta - 1)(1 - \nu)(\gamma - 1) \right) \Delta_k.
	\end{aligned}
\end{equation*}
Provided \(\|\nabla\Phi(\widehat{x}^k)\| \geq \zeta \Delta_k\) and the probability parameters $\alpha$ and $\beta$ satisfying
\begin{equation*}
	\frac{\alpha \beta - \frac{1}{2}}{(1 - \alpha)(1 - \beta)} \geq \frac{C_4}{C_1}, \quad \text{and} \quad \frac{(2\alpha - 1)(2\beta - 1)}{(1 - \alpha)(1 - \beta)} \geq \frac{\nu C_3}{(1 - \nu)(\gamma - 1)},
\end{equation*}
we have
\begin{equation*}
	\mathbb{E}[\Psi_{k+1} - \Psi_k \mid \mathcal{F}_{k-1}] \leq \left(-\frac{1}{2} C_1 \nu \zeta + (1 - \nu)(\gamma^2 - 1)\right) \Delta_k^2 \leq -\frac{1}{4} C_1 \nu \zeta \Delta_k^2,
\end{equation*}
where the last inequality follows from \( \frac{\nu}{1 - \nu} > \frac{4 \gamma^2}{\zeta C_1} \). A similar bound can be established for the case
\( \|\nabla\Phi(\hat{x}^k)\| \leq \zeta \delta_k \). By summing the above inequality across all iterations, we obtain that \( \sum_{k=0}^\infty \Delta_k^2 < \infty \) almost surely.

\textbf{Step 2:}
Based on the results established above, we now prove that \(\lim_{k \to \infty} \|\nabla\Phi(\widehat{X}_k)\| = 0\) almost surely. According to \cite[Theorem 4.16]{Chen2018MP}, we can derive the liminf convergence result that \(\liminf_{k \to \infty} \|\nabla\Phi(\widehat{X}^k)\| = 0\) almost surely under the assumptions stated in this theorem. We omit the proof for this liminf convergence result because it only needs to replace \(\|\nabla f(\widehat{X}_k)\|\) in the proof of \cite[Theorem 4.16 ]{Chen2018MP} with \(\|\nabla\Phi(\widehat{X}_k)\|\), where $f(\cdot)$ is the objective function of \cite{Chen2018MP}.
The remaining part of the proof, showing that \(\lim_{k \to \infty} \|\nabla\Phi(\widehat{X}_k)\| = 0\) almost surely, is similar to the proof of \cite[Theorem 4.3]{bandeira2013convergencetrustregionmethodsbased}, except that we need to make some modifications based on \(\|\nabla\Phi(\widehat{X}_k)\|\).
The detailed analysis below presents a proof by contradiction. 

Suppose that \(\lim_{k \to \infty} \|\nabla\Phi(\widehat{X}_k)\| = 0\) does not hold almost surely. Then with a positive probability, there exists \(\epsilon > 0\) such that \(\|\nabla\Phi(\widehat{X}_k)\| > 2\epsilon\) holds for infinitely many \(k\). For any \(\epsilon > 0\), define a sequence of random variables \(\{K_\epsilon\}\) consisting of the natural numbers \(k\) for which \(\|\nabla\Phi(\widehat{X}_k)\| > \epsilon\). Note that \(\sum_{k \in \{K_\epsilon\}} \Delta_k < \infty\) almost surely, which is implied by the proof of \cite[Lemma 4.17]{Chen2018MP}. We are going to show if such \(\epsilon\) exists such that \(\sum_{j \in \{K_\epsilon\}} \Delta_j\) diverges, and hence, we must have \(\lim_{k \to \infty} \|\nabla\Phi(\widehat{X}_k)\| = 0\) almost surely.

Because $\liminf_{k \to \infty}\|\nabla\Phi(\hat{X}^k)\|= 0$, there are infinitely many intervals of integers with a positive probability, such that each interval $\{W' + 1, \ldots, W''\}$ satisfies $0 < W' < W''$, $\|\nabla\Phi(\hat{X}^{W'})\| \leq \varepsilon$, $\|\nabla\Phi(\hat{X}^{W' + 1})\| > \varepsilon$, $\|\nabla\Phi(\hat{X}^{W''})\| > 2\varepsilon$, and for any integer $w \in (W', W'')$, $\varepsilon < \chi(\hat{X}^w) \leq 2\varepsilon$. Let $\{(W_r', W_r'')\}_r$ be an infinite sequence of such intervals. Let $(w_r', w_r'')$ be the realization of $(W_r', W_r'')$.
\begin{small}
\begin{equation*}
\epsilon<\left\|\nabla\Phi\left(\widehat{x}^{w_{r}^{\prime \prime}}\right)\right\|-\left\|\nabla\Phi\left(\widehat{x}^{w_{r}^{\prime}}\right)\right\|
=\sum_{k=w_{r}^{\prime}}^{w_{r}^{\prime \prime}-1}\left\|\nabla\Phi\left(\widehat{x}^{k+1}\right)\right\|
-\left\|\nabla\Phi\left(\widehat{x}^{k}\right)\right\|.
\end{equation*}
\end{small}
Furthermore,
\begin{equation*}
\begin{aligned}
&\left\|\nabla\Phi\left(\widehat{x}^{k+1}\right)\right\|-\left\|\nabla\Phi\left(\widehat{x}^{k}\right)\right\|\\
\leq&\left\|\nabla\Phi\left(\widehat{x}^{k+1}\right)-\nabla\Phi\left(\widehat{x}^{k}\right)\right\|\\
=&\left\|\nabla_{x}\mathbb{E}_{\tilde{\epsilon}}\left[l(\widehat{x}^{k+1}, y^{*}(\widehat{x}^{k+1}), \psi(\widehat{x}^{k+1}) + \tilde{\epsilon})\right]
-\nabla_{x}\mathbb{E}_{\tilde{\epsilon}}\left[l(\widehat{x}^{k}, y^{*}(\widehat{x}^{k}), \psi(\widehat{x}^{k}) + \tilde{\epsilon})\right]\right\|\\
\leq&\left\|\nabla_{x}\mathbb{E}_{\tilde{\epsilon}}\left[l(\widehat{x}^{k+1}, y^{*}(\widehat{x}^{k+1}), \psi(\widehat{x}^{k+1}) + \tilde{\epsilon})\right]
-\nabla_{x}\mathbb{E}_{\tilde{\epsilon}}\left[l(\widehat{x}^{k}, y^{*}(\widehat{x}^{k+1}), \psi(\widehat{x}^{k}) + \tilde{\epsilon})\right]\right\|\\
&+\left\|\nabla_{x}\mathbb{E}_{\tilde{\epsilon}}\left[l(\widehat{x}^{k}, y^{*}(\widehat{x}^{k+1}), \psi(\widehat{x}^{k}) + \tilde{\epsilon})\right]-\nabla_{x}\mathbb{E}_{\tilde{\epsilon}}\left[l(\widehat{x}^{k}, y^{*}(\widehat{x}^{k}), \psi(\widehat{x}^{k}) + \tilde{\epsilon})\right] \right\|\\
\leq&\ell_1\left(1+L_0\right)\left\|\widehat{x}^{k+1}-\widehat{x}^{k} \right\|+\ell_1\|y^{*}(\widehat{x}^{k+1})-y^{*}(\widehat{x}^{k})\|\\
\leq& L_2\left(1+L_0+\kappa\right)\left\|\widehat{x}^{k+1}-\widehat{x}^{k} \right\|,
\end{aligned}
\end{equation*}
where the third inequality follows from $\ell_1$-smoothness of $l(\cdot)$ and $L_0$-Lipschitz continuity of $\psi(\cdot)$, and the last inequality follows from $\kappa$-Lipschitz continuity of $y^{*}(\cdot)$ with $\kappa=\frac{\ell_{1}(1+L_0)}{\mu}$, provided $\nabla_y\mathcal{L}(\cdot,y)$ is $\ell_{1}(1+L_0)$-Lipschitz and $\mathcal{L}(x,\cdot)$ is $\mu$-strongly concave.
Therefore, we derive that for any $r$,
\[
\ell_1\left(1+L_0+\kappa\right)\sum_{k=W_r'}^{W_r''-1} \|\widehat{x}^{k+1}-\widehat{x}^k\| \geq \epsilon,
\]
which yields that \(\sum_{k \in [K_\epsilon]} \Delta_k = \infty\) almost surely and thus contradicts the initial assumption. Hence, we have \(\lim_{k \to \infty} \|\nabla\Phi(\widehat{X}_k)\| = 0\) almost surely. Furthermore, by Lemma 11.1.2 in Conn et al. \cite{Conn2000Trustregionmethods}, we have that the sequence produced by Algorithm \ref{TR:algorithmCL} converges to a stationary point almost surely. The proof is complete.
\end{proof}

Theorem \ref{TR_theorem_1} shows that the random iterates $\{\widehat{X}^{k}\}$ generated by Algorithm \ref{TR:algorithmCL} converge to the stationary point of SMDD \eqref{eq:f_minimax_regression} almost surely, if the trust-region model and the value estimates used in the algorithm are sufficiently accurate approximates with high probability. 
While the almost sure convergence analysis of Algorithm~\ref{TR:algorithmCL} seems like a natural extension of  \cite[Theorem~1]{liu2022coupled}, our analysis required careful handling of both the inexact step acceptance criterion and the inexact solution of the inner maximization problem, which are nontrivial challenges. 

\section{Experiments}\label{section4}
To evaluate the performance of TR, we conduct experiments on a simple synthetic example and a real-world application. In both experiments, we compare TR with the adaptive stochastic gradient descent ascent algorithm (ASGDA)~\cite{Gao2025Adaptive} and the stochastic primal-dual method  (SPD)~\cite{wood2023stochastic}. 

\subsection{A synthetic nonconvex--strongly concave minimax problem}
Consider the following stochastic minimax problem with decision-dependent distribution
\begin{equation}\label{TR_eq_exp1}
	\underset{x\in\mathbb{R}}{\operatorname{min}}\max_{y\in [-125, 125]}\mathcal{L}(x,y)\coloneqq  x^{2}-2(x+y)\mathbb{E}\left[\widetilde{\omega}\right]-y^{2},
\end{equation}
where $\widetilde{\omega}=x^3+\widetilde{\epsilon}$ and  $\widetilde{\epsilon}\sim\mathcal{N}(0,1)$. Obviously, $\mathcal{L}(\cdot)$ is nonconvex in $x$ and strongly concave in $y$, and the corresponding primal function 
\begin{equation*}
	\Phi(x)\coloneqq \max_{y\in[-125,125]}\{x^{2}-2(x+y)x^3-y^{2}\}=\left\{\begin{array}{cc}
		x^{2} - 2x^4 + 250x^3 - 15625, & x>5, \\
		x^{2} - 2x^4 + x^6, & x \in\left[-5, 5\right], \\
		x^{2} - 2x^4 - 250x^3 - 15625, & x<-5.
	\end{array}\right.
\end{equation*}
By some calculation, we have $x^{\star}=\{0, 1, -1\}$ are stationary points of SMDD \eqref{TR_eq_exp1}.

We evaluate the convergence behavior of the algorithms in terms of the norm of gradient of the primal function versus the number of iterations. 
For ASGDA, the stepsize $\eta_x=10^{-3}, \eta_y=10^{-1}$, and for SPD, the constant stepsize $\eta=10^{-3}$ and the dynamic stepsize $\eta_{k}=\frac{1}{1000+10k}$, and the batchsize $M=500$. Moreover, the initial point $(x^0, y^0)$ is randomly generated around $(10,10)$ within a radius of $0.5$.
\begin{figure}[H]
	\centering
	\begin{minipage}[t]{0.32\textwidth}
		\centering
		\subfigure[TR]{	
			\includegraphics[scale=0.38]{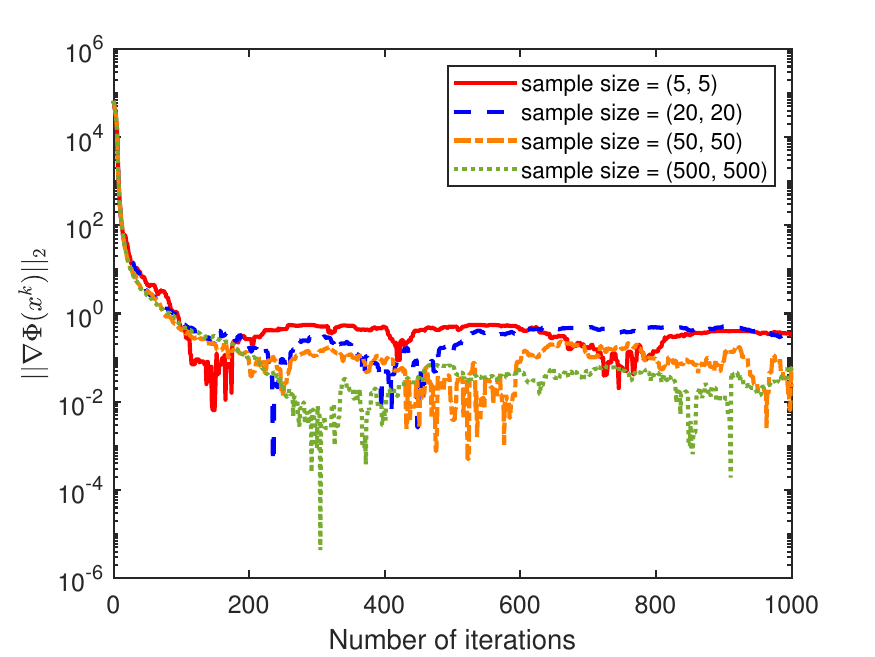}
		}
	\end{minipage}
	\begin{minipage}[t]{0.32\textwidth}
	\centering
	\subfigure[ASGDA]{		
		\includegraphics[scale=0.38]{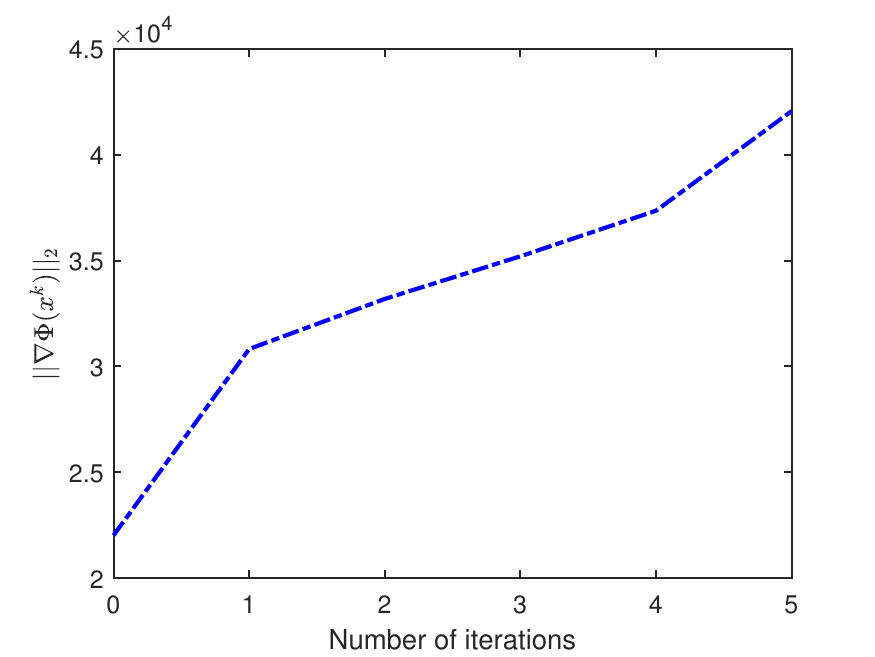}
	}
	\end{minipage}
	\begin{minipage}[t]{0.32\textwidth}
		\centering
		\subfigure[SPD]{		
			\includegraphics[scale=0.38]{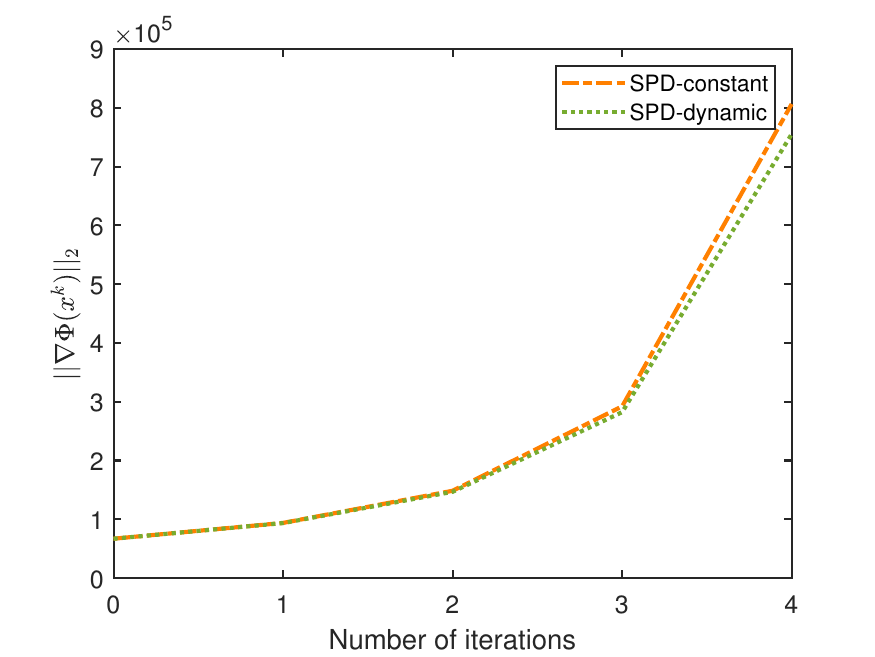}
		}
	\end{minipage}
	\caption{Performance on synthetic example}
	\label{TR_figure1}
\end{figure}

We report the performance of the three algorithms 
in Figures~\ref{TR_figure1}, where Figure~\ref{TR_figure1} (a) depicts the performance of TR with different sample sizes for local linear regression and function value estimation, Figure~\ref{TR_figure1} (b) depicts the performance of ASGDA and Figure~\ref{TR_figure1} (c) depicts the performance of SPD with constant and dynamic stepsizes. We can observe from Figure~\ref{TR_figure1} (a) that the norm of gradient tends to zero as the number of iterations increases, and the convergence accuracy gets better with larger sample size. 
From Figures~\ref{TR_figure1} (b) and (c), we can observe that ASGDA and SPD with both constant and dynamic stepsizes diverge to infinity within a finite number of iterations. 
The underlying reason may be that the global parametric model used in ASGDA can not fit the distribution map well in a large region, and SPD is designed for finding the performative equilibrium point of SMDD with strongly convex-strongly concave  objective function.

\subsection{Distributionally robust optimization problem}
Consider the following distributionally robust optimization problem~\cite{Gao2025Adaptive}
\begin{equation}\label{euq:experimet_real}
	\min _{x \in \mathbb{R}^{n}} \max _{y \in Y \subset \mathbb{R}^{N}}\mathcal{L}(x, y)\coloneqq \frac{1}{N} \sum_{i=1}^{N} y_{i}\ell(x;a_{i},b_{i})+f(x)-g(y),
\end{equation}
with $$\ell(x;a_{i},b_{i})=\log \left(1+\exp \left(-b_{i} (a_{i})^{\top} x\right)\right),$$
$$f(x)=\lambda_{1} \sum_{i=1}^{n} \frac{\alpha x_{i}^{2}}{1+\alpha x_{i}^{2}},\,\,
g(y)=\frac{1}{2} \lambda_{2}\left\|N y-\mathbf{1}\right\|^{2},$$
$\ell(\cdot)$ is the logistic loss function,
$f(\cdot)$ is a nonconvex regularizer, $g(\cdot)$ is a distributionally robust regularizer,
$Y \coloneqq\left\{y \in \mathbb{R}_{+}^{N}: \mathbf{1}^{\top} y=1\right\}$ is a simplex with $\mathbf{1}$ denotes the vector with all entries equal to one, $\mathcal{L}(\cdot)$ is the average loss function over the entire data set.
In problem \eqref{euq:experimet_real}, $x$ represents parameter of the classifier, $N$ is the total number of training samples, $a_{i}\in\mathbb{R}^{n}$ is the feature of the $i$-th sample, $b_{i}\in\{-1,1\}$ is the corresponding label.
\begin{figure}[H]
	\centering
	\begin{minipage}[t]{0.37\textwidth}
		\centering
		\subfigure[]{		
			\includegraphics[scale=0.42]{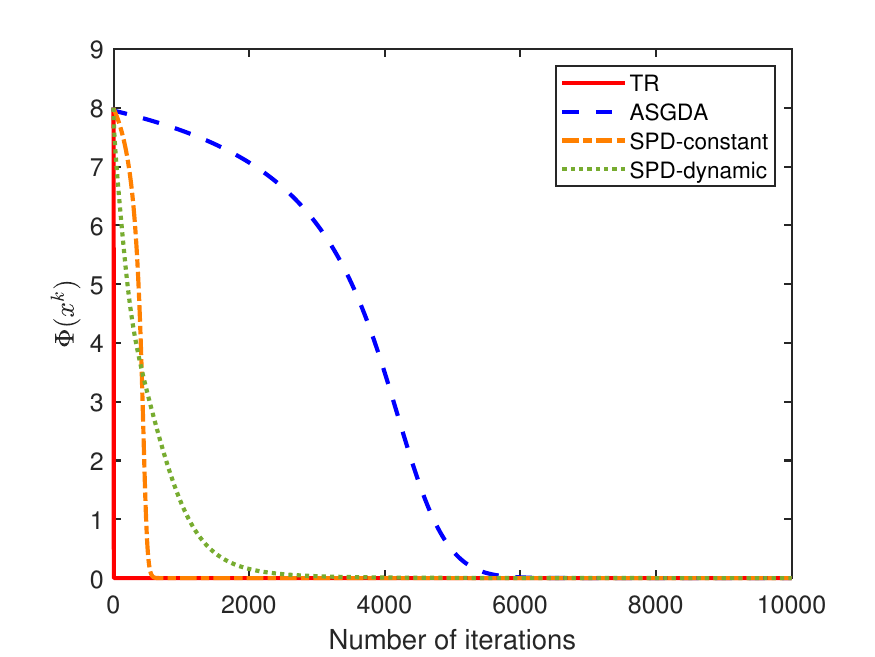}
		}
	\end{minipage}
	\begin{minipage}[t]{0.37\textwidth}
		\centering
		\subfigure[]{
			\includegraphics[scale=0.42]{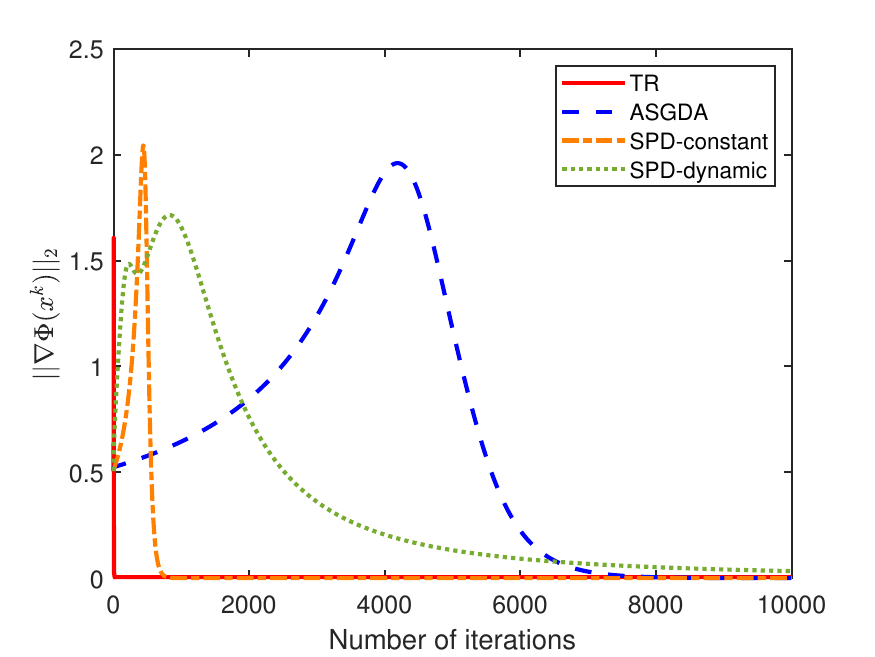}
		}
	\end{minipage}
	\caption{Performance on distributionally robust optimization}
	\label{TR_figure2}
\end{figure}
To run the simulations, we adopt the Kaggle credit scoring data set \cite{kaggle2012givemesomecredit} for loan approval in bank as the base data set to generate the data $S=\{(a_i,b_i)\}_{i=1}^N$ that depends on the decision variable $x$. Given the base data set $S_0=\{\left(a_i^0,b_i^0\right)\}_{i=1}^N$, $a_i=a_i^0+V\sin(x), b_i=b_i^0$, where $V\in\mathbb{R}^{n\times n}$ is a diagonal matrix whose diagonal entries equal to $5$.	
We set the parameters of problem (\ref{euq:experimet_real}) as $\lambda_1=1, \lambda_2=\frac{10}{N^2}, \alpha=1$.
For TR, the sample size for local linear regression and function value estimation $(N_k, M_k)=(300,100)$. For ASGDA, the stepsize $\eta_x=10^{-3}, \eta_y=10^{-1}$, and for SPD, the constant stepsize $\eta=10^{-2}$ and the dynamic stepsize $\eta_{t}=\frac{1}{10+t}$, and the batchsizes $M=200$.

We report the convergence behavior of the algorithms in Figure \ref{TR_figure2}, where Figure \ref{TR_figure2} (a) and (b) record the performance of the value of the primal function and the norm of its gradient versus the number of iterations, respectively.
We can observe from Figure \ref{TR_figure2} that the value of the primal function and the norm of its gradient corresponding to TR, ASGDA and SPD with constant and dynamic stepsizes tend to zero as the number of iterations increases. According to Figure \ref{TR_figure2}, it seems that TR algorithm needs fewer iterations to converge to the stationary point compared to ASGDA and SPD. Indeed, TR requires more samples than ASGDA and SPD, as it runs local linear regression and solves a maximization problem inexactly at each iteration.

\noindent\textbf{Acknowledgment.} The research is supported by the NSFC \#12471283 and Fundamental Research Funds for the Central Universities DUT24LK001.

\end{document}